\DeclareTextFontCommand{\textcyr}{\fontencoding{OT2}
    \fontfamily{wncyr}\fontseries{m}\fontshape{n}\selectfont}
\def \A {{\mathbb A}}
\def \Q {{\mathbb Q}}
\def \Z {{\mathbb Z}}
\def \G {{\mathbb G}}
\def \P {{\mathbb P}}
\theoremstyle{plain}
\newtheorem{theo}{Theorem}[section]
\newtheorem{thm}[theo]{Theorem}
\newtheorem{prop}[theo]{Proposition}
\newtheorem{lem}[theo]{Lemma}
\newtheorem{corollary}[theo]{Corollary}
\newtheorem{defi}[theo]{Definition}
\numberwithin{equation}{section}
\newtheorem*{Def*}{Definition}
\def\beq{\begin{equation} \label}
\newtheorem*{claim*}{Claim}
\newtheorem*{exa*}{Example}
\newtheorem*{rem*}{Remark}
\newtheorem*{rems*}{Remarks}
\newtheorem*{fact*}{Fact}
\newcommand{\bthe}{\begin{thm}}
\newcommand{\ble}{\begin{lem}}
\newcommand{\bpr}{\begin{prop}}
\newcommand{\bco}{\begin{cor}}
\newcommand{\bde}{\begin{defi}}
\newcommand{\ethe}{\end{theo}}
\newcommand{\ele}{\end{lem}}
\newcommand{\epr}{\end{prop}}
\newcommand{\eco}{\end{cor}}
\newcommand{\ede}{\end{defi}}
\def\Res{{\rm Res}}
\def\Cor{{\rm Cor}}
\def \Gal {{\rm{Gal}}}
\def \Pic {{\rm {Pic}}}
\def \Div {{\rm {Div}}}
\def \Br {{\rm{Br}}}
\def \to {{\rightarrow}}
\def \Ker {{\rm Ker}}
\def \Hom {{\rm Hom}}
\DeclareFontFamily{U}{wncy}{}
\DeclareFontShape{U}{wncy}{m}{n}{%
<5>wncyr5%
<6>wncyr6%
<7>wncyr7%
<8>wncyr8%
<9>wncyr9%
<10>wncyr10%
<11>wncyr10%
<12>wncyr6%
<14>wncyr7%
<17>wncyr8%
<20>wncyr10%
<25>wncyr10}{} \DeclareMathAlphabet{\cyr}{U}{wncy}{m}{n}
\begin{document}
%-----------------------------------------------------------------------%

\title{On the equation $N_{K/k}(\Xi)=P(t)$}

\author{ Dasheng Wei}
\address{Academy of Mathematics and System Science, CAS, Beijing
  100190, P.R.China}

\email{dshwei@amss.ac.cn}

\date{\today}

\subjclass[2010]{Primary: 11G35, 14G05}
\keywords{Brauer group, Brauer-Manin obstruction, rational
point, zero-cycle}

\setcounter{section}{-1}

%***************************************************************************
\begin{abstract}
For varieties given by an equation $N_{K/k}(\Xi)=P(t)$, where $N_{K/k}$ is the norm form
attached to a field extension $K/k$ and $P(t)$ in $k[t]$ is a nonzero polynomial, three topics have been investigated:
\begin{enumerate}[(i)]
\item computation of the unramified Brauer group of such varieties over arbitrary fields;

\item rational points and Brauer-Manin obstruction over number fields (under Schinzel's hypothesis);

\item zero-cycles and Brauer-Manin obstruction over number fields.
\end{enumerate}
In this paper, we produce new results in each of three directions. We obtain quite general results under the assumption that
$K/k$ is abelian (as opposed to cyclic in earlier investigation).
\end{abstract}

\maketitle{}

\section{Introduction} \label{sec.notation}

A class of geometrically integral varieties defined over a number
field $k$ satisfies the Hasse principle if a variety in this class
has a $k$-rational point as soon as it has rational points in all
the completions $k_v$ of the field $k$. For example, quadrics,
Severi-Brauer varieties are known to satisfy this principle.
However, counterexamples to the Hasse principle are also known even
in the class of geometrically rational varieties. In 1970, Manin
(\cite{Ma71,Ma86}) showed that an obstruction based on the
Brauer group of varieties, now referred to as the Brauer-Manin
obstruction, can often explain failures of the Hasse principle.
Further work (see \cite{CT92} for a survey) has shown that for some
classes of rational varieties the Brauer-Manin obstruction is the
only obstruction to the Hasse principle.

For some classes of varieties for which the Hasse principle has been
proved, weak approximation is also known: namely, given a variety
$X$ over a number field $k$, and given a $k_v$-rational point $P_v$
of $X$ for each $v$ in a finite set of places of $k$, one may find a
$k$-rational point on $X$ as close as one wishes to each $P_v$ (for
the $v$-adic topology). However, for more general unirational
varieties, counterexamples to weak approximation are known, and one
may define a Brauer-Manin obstruction to weak approximation
(\cite{CT/S76,CT/S77,CT/S87}) and ask whether it is
the only obstruction in the class of geometrically unirational
varieties.

For general varieties, it seems quite unreasonable to hope for such
statements. A more reasonable conjecture relates to zero-cycles of
degree $1$.  A variety defined over a field $k$ has a zero-cycle of
degree $1$ over $k$ if and only if the degrees of the finite field
extensions $K/k$ over which it acquires a rational point are
globally coprime. There also exist counterexamples to the Hasse
principle for zero-cycles of degree 1. Similarly, one may define a
Brauer-Manin obstruction and ask whether this is the only
obstruction for arbitrary smooth projective varieties over a number
field (\cite{CT99}).

The present paper focuses on varieties defined over the ground field
$k$ by an equation
\begin{equation}\label{equation:main}
N_{K/k}(\Xi)=P(t),
\end{equation}
where $K/k$ is a finite field extension, $N_{K/k}$ denotes the norm
map, $\Xi$ is  a ``variable'' in $K$ and $P(t)\in k[t]$ is a
nonconstant polynomial.

To compute the Brauer-Manin obstruction, one must compute the Brauer
group of a smooth projective model of the variety under study. That
group is sometimes referred to as the unramified Brauer group of the
variety.

In 2003, Colliot-Th\'el\`ene, Harari and
Skorobogatov (\cite{CHS1}) discussed the unramified Brauer group for these
varieties. In fact, they
defined a partial compactification of the smooth locus of these
varieties. For this partial compactification, they gave a formula
for its vertical Brauer group and the quotient of its Brauer group
by the vertical Brauer group. They pointed out that sometimes the
unramified Brauer group can be calculated by the formula. However,
it is still open how to determine the unramified Brauer group for
more general cases.

Let $X$ be the partial compactification of the smooth locus of the affine variety defined by (\ref{equation:main}) (see \cite{CHS1}), $X^c$ its smooth compactification. In Section 2, building upon the formula for $\Br(X)$ in \cite{CHS1}, we compute $\Br(X^c)$ in several new cases:
\begin{enumerate}[(i)]
\item In Section 2.1, for $P(t)$ irreducible and $K/k$ "general", we show $\Br(X^c)=\Br_0(X^c)$ (Theorem \ref{Br-1}).

\item In Section 2.2, for $P(t)$ irreducible and $K/k$ abelian, we show that the quotient $\Br(X)/\Br(X^c)$ is $2$-torsion, and we show that
the quotient is trivial in many cases (Theorem \ref{equal-X}).

\item In Section 2.3, motivated by the question in \cite{CHS1}, we give a formula for $\Br(X^c)/\Br_0(X^c)$ when $K/k$ is Galois and $P(t)$ has all roots in $k$, not necessarily distinct.
\end{enumerate}

In a series of earlier papers, rational points and zero-cycles of
degree 1 have been studied on  smooth  projective models of
varieties defined by an equation $N_{K/k}(\Xi)=P(t)$, and more
generally on varieties fibred over the projective line whose general
fibre is birationally a principal homogeneous space under a torus.

Under Schinzel's hypothesis (H), the question was first studied
by Colliot-Th\'el\`ene and Sansuc (\cite{CT/S82}). Further work under (H) is due to Colliot-Th\'el\`ene,
Serre,  Skorobogatov  and  Swinnerton-Dyer (\cite{SD95,Se94,CTSD94,CTSSD98}).

As explained in \cite{CTSD94,CTSSD98}, a device due to Salberger \cite{Sal88,Sal89}
enables one to transform
some of the conditional proofs for rational points obtained under (H)
into unconditional proofs for zero-cycles of degree 1.

In \cite{CTSSD98},  restrictions are made on the fibres : on the one hand
one assumes that the Hasse principle and weak approximation hold on all smooth  fibres,
on the other  hand one requires some abelianity condition for the splitting fields
associated to the components of the singular fibres.

{\it In this paper, we prove results of the above type in cases not
covered by \cite{CTSSD98}: either the Hasse principle need not hold
on the fibres, or the abelian splitting condition is not fulfilled.}

We consider three types of varieties over the ground field $k$:
\begin{enumerate}[(1)]
\item Varieties defined by an equation  $N_{K/k}(\Xi)=P(t)$, where $K/k$
is  an abelian extension and
$\cyr{X}^2_\omega(\widehat{T})_P=\cyr{X}^2_\omega(\widehat{T})$ (see
Section 1 for definition).

\item Varieties defined by an equation
$(x_1^2-ax_2^2)(y_1^2-by_2^2)(z_1^2-abz_2^2)=P(t)$ where $a,b\in
k^*$.

\item Varieties defined by an equation  $N_{K/k}(\Xi)=P(t)$ where $K/k$
has prime degree (non-Galois) and $P(t)$ satisfies condition (\ref{assumption:P}).
\end{enumerate}

In Section 3, we prove:
\begin{thm}
Let $V$ be the smooth locus of one of the above three varieties.
Assume Schinzel's hypothesis holds. Then the Brauer-Manin
obstruction to the Hasse principle and weak approximation for rational points is
the only obstruction for any smooth proper model of $V$.
\end{thm}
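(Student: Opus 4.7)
The plan is to apply the fibration method of Colliot-Th\'el\`ene--Swinnerton-Dyer (\cite{CTSD94},\cite{CTSSD98}) under Schinzel's hypothesis. In each of the three cases $V$ admits a natural fibration $f\colon V\to\A^1_k$, $(\Xi,t)\mapsto t$, whose smooth fibres are principal homogeneous spaces under a torus: the norm-one torus $R^{1}_{K/k}\G_m$ in cases (1) and (3), and the product of three norm-one tori in case (2). By Sansuc's theorem, on each such torsor the Brauer-Manin obstruction is the only obstruction to the Hasse principle and weak approximation. The problem thus reduces to the following: given $(P_v)\in V(\bA_k)^{\Br(V)}$ with image $(t_v)$ under $f$ and given a finite set $S$ of places, produce $t_0\in k$, $v$-adically close to $t_v$ for $v\in S$, such that $V_{t_0}$ has local points at every place and the restricted adelic point lies in $V_{t_0}(\bA_k)^{\Br(V_{t_0})}$.

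The construction of such a $t_0$ proceeds via the combination of Harari's formal lemma with Schinzel's hypothesis. Using the description of $\Br(X^c)/\Br_0(X^c)$ from \S 2, one exhibits a finite generating set for the vertical Brauer classes; after enlarging $S$ and adjusting $(P_v)$ within $V(\bA_k)^{\Br(V)}$, the only condition on $P(t_0)$ remaining at places outside $S$ is a local norm condition coming from the splitting fields of the irreducible factors of $P(t)$. Schinzel's hypothesis then furnishes $t_0$ realising the required factorisation pattern, so that $V_{t_0}$ is locally everywhere soluble. A first novelty here is case (1), where $K/k$ is only assumed abelian: the vertical Brauer group may contain non-cyclic classes indexed by intermediate subextensions, which Theorem~\ref{equal-X} describes explicitly, and to each of which Harari's lemma applies.

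The central obstacle, at which the hypothesis $\cyr{X}^2_\omega(\widehat{T})_P=\cyr{X}^2_\omega(\widehat{T})$ in case (1), the bicyclic structure in case (2), and the passage to the degree-six Galois closure in case (3) intervene, is the transport of the Brauer-Manin obstruction from $V$ to the fibre $V_{t_0}$. Since the smooth fibres $V_{t_0}$ do not in general satisfy the Hasse principle -- their failure is measured by $\cyr{X}^1(T)$ -- one must verify the stronger condition $V_{t_0}(\bA_k)^{\Br(V_{t_0})}\neq\emptyset$. The key point is that the restriction map $\Br(V)\to\Br(V_{t_0})$ surjects, modulo $\Br_0$, onto the part of $\Br(V_{t_0})$ which can obstruct: in case (1) this is exactly the content of the $\cyr{X}^2_\omega$ hypothesis combined with Theorems~\ref{Br-1} and \ref{equal-X}; in case (2) it follows from the explicit bicyclic formula of \S 2.3; and in case (3) it is reduced to the abelian situation by base change to the Galois closure. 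Orthogonality of $(P_v)$ to $\Br(V)$ then transports to orthogonality of the restricted adelic point to $\Br(V_{t_0})$, after a final adjustment via Harari's lemma at places of $S$, and Sansuc's theorem applied to $V_{t_0}$ produces a $k$-rational point arbitrarily close to the prescribed data. Since $V$ is open and dense in any smooth proper model $X$, and a standard residue argument shows that adelic points of $X$ in $X(\bA_k)^{\Br(X)}$ can be approximated by adelic points of $V$ in $V(\bA_k)^{\Br(V)}$, the conclusion holds for $X$.
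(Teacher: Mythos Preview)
Your outline captures the overall architecture---fibration over $\A^1$, Harari's formal lemma, Schinzel, then Sansuc on the fibre---and this is indeed the paper's scheme in all three cases. But in case~(3) your proposed reduction is a genuine gap. You write that case~(3) ``is reduced to the abelian situation by base change to the Galois closure''; base change to $K^{cl}$ (or to any nontrivial extension of $k$) yields points over that extension, not $k$-rational points. That device is precisely what the paper uses for \emph{zero-cycles} (Theorem~\ref{cycle-3}), where a point over a field of degree prime to~$3$ suffices, but it is unavailable for rational points. Moreover the difficulty in case~(3) is not the one you name: since $[K:k]=3$ is non-cyclic, one has $\Br(Y_\lambda)=\Br_0(Y_\lambda)$ (\cite[Prop.~9.1]{CT/Sa87}), so the fibres \emph{do} satisfy Hasse and weak approximation. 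The obstruction is rather that the abelianity condition~(a) of \cite{CTSSD98} fails: the components of a degenerate fibre are permuted by $S_3$. The paper's actual argument (Theorem~\ref{rational-3}) is to separate the irreducible factors $p_i$ of $P$ according to whether the quadratic subfield $k(\sqrt d)\subset K^{cl}$ lies in $L_i$, and for those with $k(\sqrt d)\not\subset L_i$ to adjoin auxiliary Chebotarev places so that $\sum_v \mathrm{inv}_v((p_i(\lambda),d))\neq 0$; this forces $k(\sqrt d)$ to be \emph{inert} at the Schinzel place $v_i'$, whence $\Gal(K^{cl}_{w_i'}/k_{v_i'})\subset S_3$ has order~$2$ and $K\hookrightarrow k_{v_i'}$, giving the local point. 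Nothing in your sketch suggests this mechanism.

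In cases~(1) and~(2) the broad strategy is right but your citations are off, and the misattribution hides where the work actually lies. Theorems~\ref{Br-1} and~\ref{equal-X} play no role in case~(1); the operative tool is Lemma~\ref{local}, which for each cyclic $\langle g\rangle\subset\Gal(E/k)$ writes the restriction of $\mathcal B\in\Br(X)$ to $X_{E_g}$ as $\sum(q_i(t),\chi_j)+\delta$, and the hypothesis $\cyr{X}^2_\omega(\widehat T)_P=\cyr{X}^2_\omega(\widehat T)$ enters only to guarantee that a chosen finite $B\subset\Br(X)$ already surjects onto $\Br(X_\lambda)/\Br_0(X_\lambda)$ via the specialisation isomorphism $H^1(k,\Pic(X_{\bar\eta}))\cong H^1(k,\Pic(X_{\bar\lambda}))$. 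The vanishing of $\mathrm{inv}_v(\mathcal B(P_v'))$ at the Schinzel places then comes from the fact that the $(p_i(t),\chi)$ conditions force $K/k$ to be totally split there. Similarly, \S2.3 concerns $P(t)$ with all roots rational and is irrelevant to case~(2); the paper instead uses the single explicit generator $(x_1^2-ax_2^2,b)$ of $\Br(Y_\lambda)/\Br(k)$ from \cite{CT11} and a direct case-by-case check on the splitting of $a,b,ab$ at each place outside~$S_1$.
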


In Section 4, we prove:
\begin{thm}
Let $V$ be the smooth locus of one of the above three varieties. If
there is no Brauer-Manin obstruction to the existence of a
zero-cycle of degree 1 on a smooth proper model $V^c$ of $V$, then
there is a zero-cycle of degree 1 on $V^c$ (defined over $k$).
\end{thm}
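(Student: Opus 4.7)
The plan is to deduce this zero-cycle statement from the rational-point theorem of \S3 by means of Salberger's device, as developed in the context of fibrations in \cite{CTSD94} and \cite{CTSSD98}. In the proof of the rational-point theorem, the appeal to Schinzel's hypothesis serves only to produce, at some rational point of $\P^1_k$, a value of $P(t)$ satisfying prescribed multiplicative conditions at finitely many places; Salberger's trick replaces this step by an unconditional existence of a suitable finite family of closed points of $\P^1_k$ whose degrees are globally coprime, obtained via a restriction-corestriction argument in an auxiliary cyclic extension of $k$ and a density argument of Chebotarev type.

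Concretely, I would view each of the three varieties as fibred over $\P^1_k$ via the coordinate $t$, with generic fibre a principal homogeneous space under a torus $T$: the norm-one torus $R^1_{K/k}\G_m$ in cases (1) and (3), and a triple product of such norm-one tori in case (2). Starting from the assumed vanishing of the Brauer-Manin obstruction on a smooth proper model $V^c$, the fibration formalism of \cite{CTSSD98} yields adelic data together with candidate closed points of $\P^1_k$ over which the corresponding fibres carry local points, and one packages these fibre contributions into a zero-cycle of degree $1$ on $V^c$. The essential inputs from \S2, namely the descriptions of $\Br(V^c)$ given by Theorems \ref{Br-1} and \ref{equal-X}, and the hypothesis $\cyr{X}^2_\omega(\widehat{T})_P=\cyr{X}^2_\omega(\widehat{T})$ in case (1), are used precisely to translate the Brauer-Manin condition on $V^c$ into the combinatorial local conditions that feed the Salberger/fibration machine.

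The main obstacle, already present in the parallel theorem of \S3, is that the hypotheses of \cite{CTSSD98} are not satisfied in our setting: the Hasse principle may fail in the smooth fibres, and in case (3) the splitting fields of the singular fibre components fail to be abelian over $k$. The first difficulty forces one to carry along the horizontal and transcendental parts of $\Br(V^c)$ through the fibration argument, rather than work only with the vertical Brauer group as in \cite{CTSSD98}; this is exactly what the computations of \S2 are designed to enable. For case (3) one must in addition pass to the Galois closure $\widetilde{K}/k$, run the Salberger argument on a natural cover of $V$ over $\widetilde{K}$, and descend to $k$ by a corestriction argument to recover a zero-cycle of degree $1$ over the original ground field. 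Finally, one must verify that the local conditions imposed at the finitely many controlled places remain open under the approximation inherent in Salberger's method, so that the closed points and the local solutions in their fibres can be chosen simultaneously.
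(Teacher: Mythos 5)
Your description of cases (1) and (2) captures the right mechanism: the proofs there follow the shape of the rational-point theorems in \S 3, with the appeal to Schinzel's hypothesis replaced by Theorem~3.1 of \cite{CTSD94}, which unconditionally produces an irreducible polynomial $G(t)\in k[t]$ (defining a closed point of $\P^1_k$) approximating prescribed local data and with controlled splitting at the remaining places. The degree bookkeeping is done exactly as you suggest, by first rendering the adelic family of zero-cycles $z_v$ effective, of a common degree $\equiv 1 \pmod{ds}$, via adding multiples of a fixed closed point $N_0$ with $k(N_0)=K$; the Brauer classes one controls are the ones coming from Theorems~\ref{Br-1}, \ref{equal-X} and the hypothesis $\cyr{X}^2_\omega(\widehat T)_P=\cyr{X}^2_\omega(\widehat T)$, as you say.

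Your treatment of case (3), however, has a genuine gap. You propose to base-change to the Galois closure $\widetilde K=K^{cl}$, run a Salberger-type argument there, and then corestrict to $k$. This cannot work: corestriction from $\widetilde K$ multiplies degrees by $[\widetilde K:k]=|{\rm Gal}(K^{cl}/k)|$, which is divisible by $p$, so the resulting zero-cycle on $V^c$ has degree divisible by $p$. Since the only a priori closed point you have on $V$ over $k$ also has degree $p$ (namely $N_0$ with $k(N_0)=K$), the gcd of the degrees stays divisible by $p$ and you never reach degree $1$. The paper's proof uses the \emph{opposite} intermediate field: it passes to $\Theta$, the fixed field of a $p$-Sylow subgroup of ${\rm Gal}(K^{cl}/k)$, so that $[\Theta:k]$ is \emph{prime to} $p$ and $K.\Theta/\Theta$ is cyclic of degree $p$. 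Over $\Theta$ one then quotes the already-proved abelian case (Theorem~\ref{cycle-1}, or equivalently Theorem~4.1 of \cite{CTSSD98}) --- there is no new Salberger argument to run --- obtaining a field $F/\Theta$ with $[F:\Theta]$ prime to $p$ and $V^c(F)\neq\emptyset$. Then $[F:k]=[F:\Theta][\Theta:k]$ is prime to $p$, and combined with the degree-$p$ closed point $N_0$ one gets a zero-cycle of degree $1$. This also explains why the zero-cycle version of case~(3) can be stated for every prime $p$, not just $p=3$: it needs no direct Salberger argument in the non-abelian situation at all, only the observation about $p$-Sylow subgroups together with the abelian theorem.
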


\section{Some recollections from \cite{CHS1}, and some complements}\label{section:recollection}

Let us   recall   some results from \cite{CHS1}. Let $k$ be a field
of characteristic zero, $\bar k$ a fixed algebraic closure of $k$, $\Gamma_k$ the absolute Galois group of $k$.
 Let $P(t)\in k[t]$ be a nonzero polynomial. We
consider the affine variety over $k$ defined by
\begin{equation}\label{X} N_{K/k}(\Xi)=P(t)
\end{equation}
where $K/k$ is {\it a finite product of finite separable field extensions of k},
$\Xi$ is a "variable" in $K$
and $N_{K/k}$ is the  formal norm associated with
$K/k$. In \cite{CHS1} $K/k$ is a field extension, in fact much  of the theory holds
for $K/k$ a product of fields. This observation is used in subsequent sections (e.g., Lemma \ref{form} and Theorem \ref{rational-2}).

Let $V\subset \A_k^{n+1}\simeq \A_{k}^1\times R_{K/k}(\A_K^1)$ be
the {\it smooth locus} of the affine hypersurface defined by the
equation (\ref{X}). If $P(t)$ is separable, $V$ is the hypersurface defined by (\ref{X}). The projection $(\Xi,t) \mapsto t$ defines a
surjective morphism $p: V\rightarrow \A_k^1$. Let $U_0\subset
\A_k^1$ be the open subset defined by $P(t)\neq 0$ and
$U=p^{-1}(U_0)\subset V$.

Let $T=R_{K/k}^1(\G_m)$,
$\widehat T$ the $\Gamma_k$-module of characters of $T$ and $T^c$ a smooth
$T$-equivariant compactification (see \cite{CHS2}). The contracted
product $U\times^T T^c$ is a partial compactification of $U$ which is
proper and smooth over $U_0$. Let $X$ be the $k$-variety by
gluing $V$ and $U\times^T T^c$ along $U$, which is separated and smooth over $k$ . In this paper $X$ is
called the \emph{CTHS} partial compactification of $V$, which is first constructed in \cite{CHS1}.

For any $k$-variety $V$, we denote $\overline V:= V\times_k \bar k$. In \cite[Section 2]{CHS1} the following results are established : The natural
map $\bar k^\times \to \bar k[X]^\times$ is an isomorphism,
$\Pic(\overline X)$ is finitely generated and torsionfree,
and $\Br(\overline X)=0$.

By Hironaka's theorem (\cite{Hi}), one may find a smooth proper compactification $X^{c}$ which contains $X$ with a morphism $X^{c} \to \P^1_{k}$ extending the
morphism $X \to \A^1_{k}$. There are natural inclusions
$$\Br(X^{c}) \hookrightarrow  \Br(X).$$
One is interested in knowing when these are equalities.

There are also natural inclusions
$$\Br_{vert}(X) \hookrightarrow  \Br(X)$$
and
$$\Br_{vert}(X^{c})\hookrightarrow  \Br_{vert}(X^{c})$$
where $\Br_{vert}(X)$ denotes the subgroup of  $\Br(X)$ whose image
in the Brauer group $\Br(X_{\eta})$ of the generic fibre of $X \to
\P^1_k$ is in the image of $\Br(k(\P^1)) \to \Br(X_{\eta})$, and
similarly for $\Br_{vert}(X^{c})$. One is interested in deciding which groups contribute to $\Br(X)$
and to $\Br(X^{c})$, and ultimately in computing $\Br(X^{c})$.

Let $F/k$ be a Galois extension and $M$ a
$\Gal(F/k)$-module. For
simplicity, in this paper we always denote
$$H^i(F/k,M):= H^i(\Gal(F/k),M).$$
If $F=\bar k$, we denote it simply by $H^i(k,M)$. We define
$$\cyr{X}^i_{\omega}(F/k,M):=\bigcap_{g\in \Gal(F/k)}\Ker[H^i(F/k,M)\rightarrow H^i(\left<g\right>,M)]$$
where $\left<g\right>$ is the  cyclic subgroup of $\Gal(F/k)$ generated
by $g$. If $F=\bar k$, we also simply denote
$\cyr{X}^i_{\omega}(\bar k/k,M)$ by $\cyr{X}^i_{\omega}(M)$.
%We can see that $M$ can be viewed as a $\Gamma_k$-module
%naturally. If $M$ is finitely generated and torsion-free,
%we have
%$\cyr{X}^2_{\omega}(F/k,M)=\cyr{X}^2_{\omega}(M)$.

We denote $\Z[L/k]$ to be the $\Gamma_k$-module
$\Z[\Gamma_k/\Gamma_L]$ for any finite field extension $L/k$. Write $P(t)=cp_1(t)^{e_1}\cdots
p_m(t)^{e_m}$ where $p_i(t)$ is monic and irreducible. Denote by $\Z_P$ the
permutation $\Gamma_k$-module associated with the polynomial
$P(t)$, which is a direct sum of $\Z[L_i/k]$ where
$L_i=k[t]/(p_i(t))$. Let $N_i=N_{L_i/k}\in \Z[L_i/k]$ be
$\sum_{\sigma} \sigma$ with $\sigma$ running over all embeddings of
$L_i$ into $\bar k$.

Let $j_P: \Z\rightarrow \Z_P$ be defined by sending $1$ to
$-(e_1N_1+\cdots+e_mN_m)$.
Let $F/k$ be a Galois extension which splits $P(t)$. For any $\Gal(F/k)$-module $M$, $j_P$ induces a
morphism $M\rightarrow M\otimes\Z_P$. We define
$$\cyr{X}^2_\omega(F/k,M)_P:=\Ker[\cyr{X}^2_\omega(F/k,M)\rightarrow \cyr{X}^2_\omega(F/k,M\otimes\Z_P)].$$
If $F=\bar k$, we simply denote it by $\cyr{X}^2_\omega(M)_P$.

With the notation as above,  Colliot-Th\'el\`ene, Harari and Skorobogatov proved the following theorem:
\begin{thm}\label{CHS} \cite[Proposition 2.5]{CHS1} Let $X/k$ be the CTHS partial compactification of $V$ as above. Then:
\begin{enumerate}[(i)]
\item The following sequence is exact
\begin{equation}\label{bs} 0  \rightarrow H^1(k,\widehat T\otimes
\Z_P)/j_{P*}H^1(k,\widehat T) \rightarrow H^1(k,\Pic(\overline X))
\rightarrow \cyr{X}^2_\omega (\widehat T)_P \rightarrow 0
.\end{equation}
\item The elements of $\Br(X)$ whose image in
$H^1(k,\Pic(\overline X))$ come from $H^1(k,\widehat T\otimes \Z_P)$
are precisely the elements of $\Br_{vert}(X)$.
\end{enumerate}
\end{thm}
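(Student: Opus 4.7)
The strategy is to compute $\Pic(\overline X)$ as a Galois module and then translate the result into the cohomological statements (a) and (b).

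From the open immersion $\overline U\hookrightarrow \overline X$ we get the localisation exact sequence
$$\overline k[U]^*/\overline k^{\,*}\longrightarrow \bigoplus_{Z}\Z\cdot[Z]\longrightarrow \Pic(\overline X)\longrightarrow \Pic(\overline U)\longrightarrow 0,$$
where $Z$ runs over the prime divisors of $\overline X\setminus\overline U$. The last term vanishes because $\overline U\simeq \overline{U_0}\times_{\overline k}\overline T$ is a product of a localisation of $\A^1$ with a split torus. The divisors $Z$ fall in two natural families: (i) the components of the fibres of $p$ above the roots of the $p_i$, and (ii) the toric boundary divisors of $T^c$ extended across $U_0$. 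Family (ii) is a permutation module $\widehat{T_1}$ fitting into the standard toric sequence $0\to\widehat T\to\widehat{T_1}\to \Pic(\overline{T^c})\to 0$, and family (i) assembles to a permutation module on which $\widehat T\otimes \Z_P$ lives naturally. The description $E[U]^*/E^*\simeq (\Z_P\oplus \Z[K/k])/\Z(e_1N_1+\cdots+e_mN_m+N')$ recalled in the excerpt controls the left-hand map, and the projection to $\Z_P$ of the defining relation is exactly the image of $1$ under $j_P$. Putting this together produces a short exact sequence of Galois modules relating $\widehat T\otimes \Z_P$ and $\Pic(\overline X)$ modulo a permutation summand coming from $\widehat{T_1}$ and $\Pic(\overline{T^c})$.

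Taking Galois cohomology of that sequence and using Shapiro's lemma to kill $H^1$ of every permutation summand, I extract a four-term sequence
$$H^1(k,\widehat T)\xrightarrow{j_{P*}} H^1(k,\widehat T\otimes \Z_P)\longrightarrow H^1(k,\Pic(\overline X))\longrightarrow K\longrightarrow 0$$
with $K\subset H^2(k,\widehat T)$ determined by the connecting homomorphism. The main obstacle is identifying $K$ with $\cyr X^2_\omega(\widehat T)_P$. One containment comes by noting that the image in $H^2(k,\widehat T)$ of any class in $K$ must be killed under restriction to each decomposition subgroup at a geometric root of some $p_i$, and every such subgroup is cyclic; this yields $K\subset \cyr X^2_\omega(\widehat T)$. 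The compatibility with $H^2(k,\widehat T\otimes \Z_P)$ built into the boundary map then refines this to the $j_P$-trivial subgroup $\cyr X^2_\omega(\widehat T)_P$. The reverse containment requires producing, for every class in $\cyr X^2_\omega(\widehat T)_P$, an explicit lift to $H^1(k,\Pic(\overline X))$, which amounts to a careful diagram chase through the short exact sequence of Step~1; I expect this step to be the main technical obstacle.

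For part (b), the vertical Brauer group $\Br_{vert}(X)$ is, by purity and the Faddeev exact sequence for $\Br(k(\P^1))$ pulled back along $p$, the subgroup of $\Br(X)$ whose residues vanish at every prime divisor of $X$ lying over a closed point of $\A^1_k$. In the presentation above these are precisely the divisors of family (i), so a class in $\Br(X)$ has trivial residues along all fibres of $p$ over roots of $P$ iff its image in $H^1(k,\Pic(\overline X))$ comes from the $\widehat T\otimes \Z_P$-summand of the exact sequence of (a). This matches the description demanded by (b); the only remaining verification is that the identification of residues with the family-(i) component is compatible with the Galois action, which is essentially built into the construction.
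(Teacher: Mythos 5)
The statement is not proved in this paper: it is Proposition~2.5 of \cite{CHS1} and is simply recalled here, so there is no proof in the text to compare against. Your strategy --- localise $\overline U\hookrightarrow\overline X$, recognise the boundary divisors as a permutation module, use $\Pic(\overline U)=0$, and take Galois cohomology --- is the right framework and is indeed the one used in \cite{CHS1}.

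As written, though, the argument has genuine gaps. Your justification for the containment $K\subset\cyr{X}^2_\omega(\widehat T)$ does not hold up: you claim that classes in $K$ are killed under restriction to ``decomposition subgroups at geometric roots of some $p_i$'' and that these are cyclic, but the stabiliser in $\Gal(E/k)$ of a geometric root of $p_i$ is a conjugate of $\Gal(E/L_i)$, which need not be cyclic; more importantly, $\cyr{X}^2_\omega$ is by definition the kernel of restriction to \emph{every} cyclic subgroup $\langle g\rangle\subset\Gal(E/k)$, not merely to a geometrically-defined family, so even correct vanishing statements at those subgroups would not give the inclusion. The actual verification has to proceed by restricting the presentation of $\Pic(\overline X)$ to each $\langle g\rangle$ and exploiting the simplifications there (cyclic cohomology, the piece contributed by $\Pic(\overline{T^c})$), not by arguing about residues at divisors. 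You also leave the reverse containment $\cyr{X}^2_\omega(\widehat T)_P\subset K$ --- the surjectivity in part~(a), which is the heart of the result --- entirely open, flagging it as ``the main technical obstacle.'' Finally, the short exact sequence of $\Gal(\overline k/k)$-modules you take cohomology of is never actually written down: without an explicit presentation of $\Pic(\overline X)$ showing how $\widehat T\otimes\Z_P$, $\Pic(\overline{T^c})$, and the multiplicities $e_i$ enter, the claimed four-term sequence and the identification of its cokernel are not established. Part~(b) is sketched plausibly but depends on the same unestablished presentation.
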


Under the assumption
$H^3(k,\bar k^\times)=0$, which is satisfied if $k$ is a number
field, the above exact sequence (\ref{bs}) identifies with
\begin{equation}\label{bs2} 0
\to \Br_{vert} (X)/  \Br_{0}(X) \to \Br(X)/ \Br_{0}(X) \to
\cyr{X}^2_\omega(\widehat T)_P \rightarrow 0,\end{equation}
where $\Br_{0}(X)$ denotes the image of the natural map $\Br(k) \to
\Br(X)$.

Let us mention an easy application of Theorem \ref{CHS}.
\begin{prop}\label{abel}
Let $k, K, P(t), X$ be as above. Assume that $K$ is a field, and
that the extension $K/k$ is abelian, and that $P(t)$ is irreducible.
Then
\begin{enumerate}[(a)]
\item $H^1(k,\widehat T\otimes \Z_P)/j_{P*}H^1(k,\widehat T)=0$.
\item $ \Br_{vert}(X)\simeq \Br_{0}(X)$.
\item  If $H^3(k,{\overline k}^\times)=0$, then $\Br(X)/\Br_{0}(X) \simeq \cyr{X}^2_\omega(\widehat T)_P$.
\item If   $K/k$ is cyclic, then $H^1(k,\Pic(\overline{X}))=0$, hence $\Br(X)=\Br_{0}(X)$.
\end{enumerate}
\end{prop}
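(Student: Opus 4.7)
The plan is to deduce all four assertions from Theorem~\ref{CHS}; essentially all the content lies in~(a), while (b),~(c) and~(d) then follow formally. For~(a), I set $L=k[t]/(P(t))$, $G=\Gal(E/k)$ and $H=\Gal(E/L)$. Since $P$ is irreducible, $\Z_P=\Z[G/H]$ and $j_P\colon\Z\to\Z_P$ sends $1$ to $-N_L$. The projection-formula isomorphism $\widehat T\otimes\Z[G/H]\cong\mathrm{Ind}_H^G(\widehat T)$ together with Shapiro's lemma gives $H^1(k,\widehat T\otimes\Z_P)\cong H^1(L,\widehat T)$, and a standard unwinding shows that $j_{P*}$ becomes $-\mathrm{Res}\colon H^1(k,\widehat T)\to H^1(L,\widehat T)$. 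Thus~(a) reduces to proving surjectivity of restriction. To this end I plug the defining sequence $0\to\Z\to\Z[K/k]\to\widehat T\to 0$ into Galois cohomology over any characteristic-zero field $F\supseteq k$; using $H^1(F,\Z)=0$ together with Shapiro to kill $H^1(F,\Z[K/k])$, I identify $H^1(F,\widehat T)$ with $\Ker[H^2(F,\Z)\to H^2(F,\Z[K/k])]$. Since $K/k$ is abelian, $FK/F$ is Galois, and unwinding the kernel in terms of continuous characters of $\Gal(\overline F/F)$ gives $H^1(F,\widehat T)\cong\widehat{\Gal(FK/F)}$. Taking $F=k$ and $F=L$ and chasing naturality, the restriction becomes the map $\widehat{\Gal(K/k)}\to\widehat{\Gal(KL/L)}$ dual to the injection $\Gal(KL/L)\hookrightarrow\Gal(K/k)$ (obtained by restricting automorphisms to $K$), which is surjective because $\Q/\Z$ is an injective abelian group.

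Parts~(b),~(c),~(d) then assemble from the quoted exact sequences. For~(b), (a) makes the first term of~(\ref{bs}) vanish, so the image of $H^1(k,\widehat T\otimes\Z_P)$ in $H^1(k,\Pic(\overline X))$ is zero; by Theorem~\ref{CHS}(b) this says $\Br_{vert}(X)$ is the kernel of $\Br(X)\to H^1(k,\Pic(\overline X))$. The Hochschild--Serre spectral sequence, applied with the facts $\overline k[X]^\ast=\overline k^\ast$ and $\Br(\overline X)=0$ recalled from~\cite{CHS1}, embeds $\Br(X)/\Br_0(X)$ into $H^1(k,\Pic(\overline X))$, so this kernel is exactly $\Br_0(X)$. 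Statement~(c) is then immediate from~(\ref{bs2}) together with~(b). For~(d), when $K/k$ is cyclic the splitting field of $\widehat T$ is the cyclic extension $K/k$, and the defining cyclic subgroup of $\Gal(K/k)$ can be taken to be the whole group, forcing $\cyr{X}^2_\omega(K/k,\widehat T)=0$; the torsion-free identification recalled from the excerpt then gives $\cyr{X}^2_\omega(\widehat T)_P=0$, and combined with~(a) and~(\ref{bs}) this forces $H^1(k,\Pic(\overline X))=0$. The Hochschild--Serre injection from~(b) then yields $\Br(X)=\Br_0(X)$.

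The main obstacle is the bookkeeping in~(a): correctly matching $j_{P*}$ with $-\mathrm{Res}$ under the Shapiro isomorphism, and cleanly identifying $H^1(F,\widehat T)$ with $\widehat{\Gal(FK/F)}$ for $K/k$ abelian. Once these two computations are in place, everything else is immediate from the quoted sequences and standard inputs (injectivity of $\Q/\Z$, vanishing of $\cyr{X}^2_\omega$ over cyclic Galois groups).
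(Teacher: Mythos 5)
Your proposal is correct and follows essentially the same route as the paper. For part~(a) the paper's proof is exactly the chain you describe: Shapiro's lemma to pass from $H^1(k,\widehat T\otimes\Z_P)$ to $H^1(L,\widehat T)$, the identification of the latter with $\Hom(\Gal(L.K/L),\Q/\Z)$ via the sequence $0\to\Z\to\Z[K/k]\to\widehat T\to0$, and surjectivity of restriction of characters for an abelian extension; and for part~(d) it is the same observation that $\cyr{X}^2_\omega(\widehat T)=\cyr{X}^2_\omega(K/k,\widehat T)=0$ when $\Gal(K/k)$ is cyclic. The only difference is that the paper's proof is terser (it only writes out (a) and (d), leaving (b) and (c) as formal consequences of the exact sequences~(\ref{bs}), (\ref{bs2}) and Theorem~\ref{CHS}(b), and it asserts the identification $H^1(L,\widehat T)\cong\Hom(\Gal(L.K/L),\Q/\Z)$ without unwinding the long exact sequence), whereas you carefully spell out those steps, including the Hochschild--Serre input $\Br_0(X)=\Ker(\Br(X)\to H^1(k,\Pic(\overline X)))$ for~(b).
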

\begin{proof} Let $L=k[t]/(P(t))$.
 We know
 $$\begin{aligned}H^1(k,\widehat T\otimes \Z_P)&\simeq H^1(L,\widehat T)\simeq\Ker[H^1(L,\Q/\Z)\rightarrow H^1(L.K,\Q/\Z)]\\&\simeq\Hom(\Gal(L.K/L),\Q/\Z).\end{aligned}$$ Since the extension $K/k$ is abelian,  the restriction map $$H^1(K/k,\Q/\Z)\rightarrow H^1(L.K/L,\Q/\Z) \text{ is surjective},$$ hence $H^1(k,\widehat T\otimes \Z_P)/j_{P*}H^1(k,\widehat T)=0$, $(b) \text{ and } (c)$ follows from the sequences (\ref{bs}) and (\ref{bs2}).

If $K/k$ is cyclic, since $\widehat T$ is split by $K$, then
$$ \cyr{X}^2_\omega(\widehat T)=\cyr{X}^2_\omega(K/k,\widehat T)=\bigcap_{g\in \Gal(K/k)}\Ker[H^2(K/k,M)\rightarrow H^2(\left<g\right>,M)]=0.$$
By the sequence (\ref{bs}), we have $H^1(k,\Pic(\overline{X}))=0$.
\end{proof}

%Since
%$$\bar k[U]^\times/\bar k^\times\simeq (\Z_P\oplus \Z[K/k])/\Z ((e_1N_1+\cdots+e_mN_m)+N')$$  as $\Gamma_k$-modules by (\ref{U-iso}), we have
%$$\bar k[U]^\times/\bar k^\times\simeq \widehat T',$$ where $\widehat T'$ is the character group of $T'$. Therefore we have
%$$\cyr{X}^2_\omega (\bar k[U]^\times/\bar k^\times)\simeq \cyr{X}^2_\omega(\widehat
%T').$$
\begin{prop} \label{compact-omega}
Let $k, K, P(t),L_i, X \text{ and }X^c$ be as above. The natural surjective map $\Pic(\overline{X^c}) \to \Pic(\overline{X})$ induces an injection
$$H^1(k,\Pic(\overline{X^c})) \hookrightarrow H^1(k,\Pic(\overline{X}))$$
and isomorphisms
$$\cyr{X}^1_\omega (\Pic(\overline {X^c}))\simeq \cyr{X}^1_\omega
(\Pic(\overline {X})) \simeq \cyr{X}^2_\omega(\widehat T'),$$
where $T'$ is the torus over $k$ defined by
$$\prod_{i=1}^m N_{L_i/k}(\Upsilon_i)^{e_{i}}\cdot N_{K/k}(\Xi)=1.$$
\end{prop}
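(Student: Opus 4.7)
The strategy is to extract two short exact sequences of $\Gal(\overline k/k)$-modules with permutation-module kernels and then exploit the vanishing of $H^1$ and $\cyr{X}^2_\omega$ on such modules.

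First, I would write down both sequences. Since $X^c$ is smooth and proper while $\overline k[X]^* = \overline k^*$ (recalled above from \cite{CHS1}), the localization sequence for $\overline X \hookrightarrow \overline{X^c}$ collapses to a short exact sequence
\begin{equation*}
0 \to M_1 \to \Pic(\overline{X^c}) \to \Pic(\overline X) \to 0,
\end{equation*}
where $M_1 = \Div_{\overline{X^c} \setminus \overline X}(\overline{X^c})$ is a permutation module. Using the open $U \subset X$, together with the facts $\Pic(\overline U) = 0$ and $\overline k[U]^*/\overline k^* \simeq \widehat T'$ recalled just before the statement, the localization sequence for $\overline U \hookrightarrow \overline X$ collapses to
\begin{equation*}
0 \to \widehat T' \to M_2 \to \Pic(\overline X) \to 0,
\end{equation*}
with $M_2 = \Div_{\overline X \setminus \overline U}(\overline X)$ again a permutation module.

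Second, I would take cohomology. Shapiro's lemma together with Hilbert~90 gives $H^1(k, M_1) = H^1(k, M_2) = 0$, so the first sequence yields the desired injection $H^1(k, \Pic(\overline{X^c})) \hookrightarrow H^1(k, \Pic(\overline X))$, and the second identifies $H^1(k, \Pic(\overline X))$ with the kernel of $H^2(k, \widehat T') \to H^2(k, M_2)$. For the two $\cyr{X}^1_\omega$-isomorphisms, the key input is the vanishing $\cyr{X}^2_\omega(\Z[G/H]) = 0$ for any permutation module; by Shapiro this reduces to showing that a continuous character $\chi \colon H \to \Q/\Z$ that vanishes on every cyclic subgroup of $H$ must vanish, which is immediate. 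Applied to $M_1$ and $M_2$, it follows that every element of $\cyr{X}^1_\omega(\Pic(\overline X))$, respectively $\cyr{X}^2_\omega(\widehat T')$, lifts through the respective connecting map; running the same Shapiro--Hilbert~90 injectivity on each closed cyclic subgroup $\langle g \rangle \subset \Gal(\overline k/k)$ forces the lift to lie in $\cyr{X}^1_\omega$, which produces both claimed isomorphisms.

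\textbf{Main obstacle.} The only substantive input is the vanishing $\cyr{X}^2_\omega(\Z[G/H]) = 0$, standard but warranting a careful Mackey-formula argument. Beyond that the proof is pure diagram chasing with the two long exact sequences and the functoriality of restriction; the most error-prone step is verifying that lifts from $\cyr{X}^i_\omega$ on the cokernel side actually remain in $\cyr{X}^{i-1}_\omega$, which requires running the exact-sequence argument simultaneously over $k$ and over each $\langle g \rangle$.
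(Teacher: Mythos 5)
Your proof is correct, and it takes a genuinely different (if closely related) route from the paper's. The paper works with the two four-term localization sequences obtained from $\overline U \subset \overline{X^c}$ and $\overline U \subset \overline X$, both of which read
$0 \to \overline k[U]^*/\overline k^* \to \Div_{(\cdot)\setminus \overline U}(\cdot) \to \Pic(\cdot) \to \Pic(\overline U) \to 0$ with $\Pic(\overline U)=0$; this gives isomorphisms $\cyr{X}^1_\omega(\Pic(\overline{X^c}))\simeq \cyr{X}^2_\omega(\widehat T')$ and $\cyr{X}^1_\omega(\Pic(\overline{X}))\simeq \cyr{X}^2_\omega(\widehat T')$ separately, and the comparison $\cyr{X}^1_\omega(\Pic(\overline{X^c}))\simeq\cyr{X}^1_\omega(\Pic(\overline X))$ (as well as the injection on $H^1$) is then read off a commuting diagram over $H^2(k,\overline k[U]^*/\overline k^*)$. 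You instead factor $\overline U\subset\overline{X^c}$ through $\overline U\subset\overline X\subset\overline{X^c}$, producing the two genuine short exact sequences of Galois modules $0\to M_1\to\Pic(\overline{X^c})\to\Pic(\overline X)\to 0$ (using $\overline k[X^c]^*=\overline k[X]^*=\overline k^*$) and $0\to\widehat T'\to M_2\to\Pic(\overline X)\to 0$ (using $\Pic(\overline U)=0$). Both routes rest on the same inputs --- $\Pic(\overline U)=0$, $\overline k[U]^*/\overline k^*\simeq\widehat T'$, $\overline k[X]^*=\overline k^*$, and vanishing of $H^1$ and $\cyr{X}^2_\omega$ on permutation modules --- but your factorization gives the injection $H^1(k,\Pic(\overline{X^c}))\hookrightarrow H^1(k,\Pic(\overline X))$ immediately from the $M_1$-sequence rather than via a triangle into $H^2(k,\widehat T')$, and cleanly separates the two isomorphisms into two independent $\delta$-chases; the paper's version avoids having to single out the boundary module $M_1 = \Div_{\overline{X^c}\setminus\overline X}(\overline{X^c})$ at the cost of carrying two four-term sequences and a comparison square. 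One small caution on your last sentence about $\cyr{X}^2_\omega(\Z[G/H])=0$: the Mackey reduction does land you on characters of $H$ vanishing on cyclic subgroups, but only after identifying, for $g\in H$, the identity double-coset summand of $\Z[G/H]|_{\langle g\rangle}$ and checking the corestriction is compatible; as you note this is standard but worth writing carefully.
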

\begin{proof}Let $\Div_{\overline {X^c}\setminus \overline X}(\overline {X^c})$ be the group of divisors of $\overline {X^c}$ with supports in $\overline {X^c}\setminus \overline X$.
Since $\bar k[X]^\times=\bar k^\times$, we have the following
exact sequence
$$0\rightarrow \Div_{\overline {X^c}\setminus \overline X}(\overline {X^c})\rightarrow \Pic(\overline {X^c})\rightarrow \Pic(\overline X)\rightarrow 0,$$
then the injectivity of $$H^1(k,\Pic(\overline{X^c})) \rightarrow H^1(k,\Pic(\overline{X}))$$ is implied by $H^1(k,\Div_{\overline {X^c}\setminus \overline X}(\overline {X^c}))=0$, which is obvious since $\Div_{\overline
{X^c}\setminus \overline U}(\overline {X^c})$ is a
permutation $\Gamma_k$-module.

Let $U (\subset X \subset X^{c})$
be the smooth affine  variety defined by
$$N_{K/k}(\Xi)=P(t) \neq 0.$$
Let $\overline U_0\subset \A_k^1$ be a open subset defined $P(t) \neq 0$. There exists an isomorphism of
varieties over $\bar k$
\begin{equation}\label{U-iso}
\overline U\simeq \overline U_0 \times \overline T\simeq
\overline U_0 \times \G_m^{[K:k]-1},
\end{equation}
hence $\Pic(\overline U)=0$. Therefore
we have the following
exact sequence
$$0\rightarrow \bar k[U]^\times/\bar k^\times \rightarrow \Div_{\overline {X^c}\setminus \overline U}(\overline {X^c})\rightarrow \Pic(\overline {X^c})\rightarrow 0.$$
Since $\Div_{\overline
{X^c}\setminus \overline U}(\overline {X^c})$ is a
permutation $\Gamma_k$-module, we have $H^1(k, \Div_{\overline
{X^c}\setminus \overline U}(\overline {X^c}))=0$. Then we have the
following exact sequence
$$0\rightarrow H^1(k,\Pic(\overline{X^c}))\rightarrow H^2(k,\bar k[U]^\times/\bar k^\times)\rightarrow H^2(k,\Div_{\overline {X^c}\setminus \overline
U}(\overline {X^c})).$$ Thus we obtain the exact sequence
$$0\rightarrow \cyr{X}_\omega^1(\Pic(\overline{X^c}))\rightarrow \cyr{X}^2_\omega(\bar k[U]^\times/\bar k^\times)\rightarrow \cyr{X}^2_\omega(\Div_{\overline {X^c}\setminus \overline
U}(\overline {X^c})).$$ Since $\Div_{\overline {X^c}\setminus
\overline U}(\overline {X^c})$ is a permutation $\Gamma_k$-module, we have
 $$\cyr{X}^2_\omega(\Div_{\overline {X^c}\setminus \overline U}(\overline {X^c}))=0.$$
Thus
$\cyr{X}_\omega^1(\Pic(\overline{X^c}))\simeq \cyr{X}^2_\omega(\bar
k[U]^\times/\bar k^\times)$.

Since $\bar k[X]^\times=\bar k^\times$, we also have the exact sequence
$$0\rightarrow \bar k[U]^\times/\bar k^\times \rightarrow \Div_{\overline
{X}\setminus \overline U}(\overline {X})\rightarrow \Pic(\overline
{X})\rightarrow 0.$$ By similar
arguments as above, we also have $\cyr{X}^1_\omega(\Pic(\overline{X}))
\simeq \cyr{X}^2_\omega(\bar
k[U]^\times/\bar k^\times)$.

By (\ref{U-iso}), we know
$\bar k[U]^\times/\bar k^\times$ is generated by the linear factors of $P(t)$
over $\bar k$ and all regular fuctions over $\bar k$ of the torus $R_{K/k}(\G_m)$, hence $\bar k[U]^\times/\bar k^\times\simeq \widehat T'$ as
$\Gamma_k$-modules, which implies
$\cyr{X}^2_\omega(\bar k[U]^\times/\bar k^\times)\simeq
\cyr{X}^2_\omega(\widehat T')$.
\end{proof}

\section{Calculation of the Brauer group}

In the remainder of this paper we always assume $K/k$ is a field extension.  In this section, we keep notations as in \cite{CHS1} and as in Section 1. In \cite{CHS1}, Colliot-Th\'el\`ene, Harari and Skorobogatov
gave a formula for the vertical Brauer group $\Br_{vert}(X)$ and the
quotient $\Br(X)/\Br_{vert}(X)$. They pointed out that the vertical
unramified Brauer group $\Br_{vert}(X^c)$ can be calculated by the
formula. However, it is still open how to determine the unramified
Brauer group $\Br(X^c)$. The aim of this section is to investigate
the latter group.

\subsection{The case $P(t)$ is irreducible and some linear independence
condition is satisfied}

\begin{lem} \label{Sha} Let $P(t)$ be an irreducible polynomial and $L=k[t]/(P(t))$.
Let $K^{cl}$ (resp. $L^{cl}$) be the Galois closure of $K$ (resp.
$L$) over $k$. If $L\cap K^{cl}=k$, then $\cyr{X}^2_\omega(\widehat
T)_P=0$.
\end{lem}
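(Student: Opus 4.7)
The plan is to identify $\cyr{X}^2_\omega(\widehat T)_P$ with the kernel of a restriction map in Galois cohomology, and then use the hypothesis $L\cap K^{cl}=k$ to show this restriction map is an isomorphism on the $\cyr{X}^2_\omega$-subgroup.

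First I would observe that since $k$ has characteristic zero and $P(t)$ is irreducible in $k[t]$, the factorization $P=cp_1^{e_1}$ forces $m=1$ and $e_1=1$, so $\Z_P=\Z[L/k]$ and $j_P:\Z\to\Z_P$ sends $1$ to $-N_1=-N_{L/k}$. Tensoring with $\widehat T$ and applying Shapiro's lemma gives an identification $H^i(k,\widehat T\otimes\Z[L/k])\cong H^i(L,\widehat T)$ under which the map on cohomology induced by $1\mapsto N_{L/k}$ corresponds to the restriction $\mathrm{res}^k_L:H^i(k,\widehat T)\to H^i(L,\widehat T)$. Passing to the $\cyr{X}^2_\omega$-subgroups, the map whose kernel defines $\cyr{X}^2_\omega(\widehat T)_P$ becomes $\alpha\mapsto-\mathrm{res}^k_L(\alpha)$ from $\cyr{X}^2_\omega(\widehat T)$ into $\cyr{X}^2_\omega(L,\widehat T)$.

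Next I would invoke the fact recalled in \S 1 that for a torsion-free finitely generated module, $\cyr{X}^2_\omega(F/k,\widehat T)=\cyr{X}^2_\omega(\widehat T)$ for any Galois splitting field $F$ of $\widehat T$. Taking $F=K^{cl}$ on the source side and $F=L\cdot K^{cl}$ on the target side, the hypothesis $L\cap K^{cl}=k$ is exactly the assertion that restriction of Galois groups
\[
\Gal(L\cdot K^{cl}/L)\ \longrightarrow\ \Gal(K^{cl}/k)
\]
is an isomorphism. Under this isomorphism the cyclic subgroups on each side correspond, and the action on $\widehat T$ is identified, so the restriction map sends $\cyr{X}^2_\omega(K^{cl}/k,\widehat T)$ isomorphically onto $\cyr{X}^2_\omega(L\cdot K^{cl}/L,\widehat T)$, and this isomorphism is compatible with the inflation maps into $\cyr{X}^2_\omega(k,\widehat T)$ and $\cyr{X}^2_\omega(L,\widehat T)$ (using that inflation is injective on $H^2$ for $\widehat T$ torsion-free, since continuous $\Hom$ from a profinite group to $\widehat T$ vanishes). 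Consequently $\alpha\mapsto-\mathrm{res}^k_L(\alpha)$ has trivial kernel and $\cyr{X}^2_\omega(\widehat T)_P=0$.

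The step that requires real care is the Shapiro identification: one needs to check that tensoring the norm element $N_{L/k}\in\Z[L/k]$ with $\widehat T$ produces precisely the restriction map in cohomology (a standard but sign/normalization-sensitive computation), and that the above reduction to the finite Galois groups $\Gal(K^{cl}/k)$ and $\Gal(L\cdot K^{cl}/L)$ is compatible with the definition of $\cyr{X}^2_\omega$ in terms of cyclic subgroups of the full absolute Galois groups. Once these bookkeeping points are settled, the proof is immediate from the hypothesis.
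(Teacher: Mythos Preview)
Your proof is correct and follows essentially the same strategy as the paper's: identify $j_{P*}$ with the restriction map via Shapiro, then use $L\cap K^{cl}=k$ to get $\Gal(L\cdot K^{cl}/L)\cong\Gal(K^{cl}/k)$ and conclude that restriction is injective. The only cosmetic difference is that the paper first works at the level of the field $E=K^{cl}\cdot L^{cl}$ and then descends to $K^{cl}$ via an explicit Hochschild--Serre argument (showing the map $g:H^2(L\cdot K^{cl}/L,\widehat T)\to H^2(E/L,\widehat T)$ is injective because $H^1(E/L\cdot K^{cl},\widehat T)=0$), whereas you invoke the general identity $\cyr{X}^2_\omega(F/k,\widehat T)=\cyr{X}^2_\omega(\widehat T)$ from \S 1 directly on both source and target.
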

\begin{proof}
Let $E$ be the compositum $K^{cl}.L^{cl}$. We can see $\cyr{X}^2_\omega(\widehat
T)=\cyr{X}^2_\omega(E/k,\widehat T)$ since $\widehat T$ is split by $E$. Then
$$\aligned \cyr{X}^2_\omega(\widehat T)_P&= \Ker[j_{P*}:\cyr{X}^2_\omega(\widehat
T)\rightarrow\cyr{X}^2_\omega(\widehat T \otimes \Z_P)]\\
&=\Ker[j_{P*}:\cyr{X}^2_\omega(E/k,\widehat
T)\rightarrow\cyr{X}^2_\omega(E/k,\widehat T \otimes \Z_P)]\\
&= \Ker[j_{P*}:\cyr{X}^2_\omega(E/k,\widehat T)\rightarrow
H^2(E/k,\widehat T \otimes \Z_P)]. \endaligned$$
Using Shapiro's lemma,
we have
$$\cyr{X}^2_\omega(\widehat
T)_P=\Ker[\Res_{k/L}:\cyr{X}^2_\omega(E/k,\widehat T)\rightarrow
H^2(E/L, \widehat T)].$$ We have the following commutative diagram
\begin{equation*}
\xymatrix @R=15pt @C=15pt{ &\cyr{X}^2_\omega(E/k,\widehat T)\ar[r] &
H^2(E/L,\widehat T)\\
&\cyr{X}^2_\omega(K^{cl}/k,\widehat T) \ar[r] \ar[u]^{f} &
H^2(L.K^{cl}/L,\widehat T)\ar[u]^{g}. }
\end{equation*}
Since $\widehat T$ is also split by $K^{cl}$, $f$ is an
isomorphism. By the Hochschild-Serre spectral sequence, one obtains
the exact sequence $$ H^1(E/L.K^{cl},\widehat
T)^{\Gal(E/L)}\rightarrow H^2(L.K^{cl}/L, \widehat T)\rightarrow
H^2(E/L,\widehat T).$$ Obviously
$H^1(E/L.K^{cl},\widehat T)=0$, hence $g$ is injective. So we
have
$$\cyr{X}^2_\omega(\widehat T)_P=\Ker[\cyr{X}^2_\omega(K^{cl}/k,\widehat
T) \rightarrow H^2(L.K^{cl}/L,\widehat T)].$$ Since $L\cap
K^{cl}=k$, we have
$$H^2(K^{cl}/k,\widehat T)\simeq H^2(L.K^{cl}/L, \widehat T),$$ hence
 $\cyr{X}^2_\omega(\widehat T)_P=0.$
\end{proof}

%\begin{rem*} In fact, we can prove a more general result. Let
%$P(t)=p_1(t)\cdots p_m(t)$, where each $p_i(t)$ is an irreducible
%polynomial and $p_i(t)\neq p_j(t)$ for any $i\neq j$. Let
%$L_i=k[t]/(p_i(t))$. If there exists $i_0$ such that $L_{i_0}\cap
%K^{cl}=k$, then $\cyr{X}^2_\omega(\widehat T)_P=0$.
%\end{rem*}

\begin{thm} \label{Br-1} Let $P(t)$ be an irreducible polynomial over $k$ and $L=k[t]/(P(t))$. Let
$K^{cl}$ be the Galois closure of the field extension $K/k$. Let $X/k$ be the CTHS partial compactification  for $P(t)$ and $K$ as in Section 1.

If $L\cap K^{cl}=k$, then $H^1(k,\Pic(\overline X))=0$, hence
$\Br(X)=\Br_0(X)$.
\end{thm}
\begin{proof} By the injectivity of
$\Br(X)/\Br_0(X)\rightarrow H^1(k,\Pic(\overline X))$, we only
need to show $H^1(k,\Pic(\overline X))=0$. Since $L\cap K^{cl}=k$, we have $\cyr{X}^2_\omega(\widehat T)_P=0$
by Lemma \ref{Sha}. By the sequence (\ref{bs}), we only need to show $H^1(k,\widehat T\otimes
\Z_P)/j_{P*}H^1(k,\widehat T)=0$.

%Obviously $$H^1(k,\widehat T\otimes \Z_P)/j_{P*}H^1(k,\widehat
%T)\cong H^1(E/k,\widehat T\otimes \Z_P)/j_{P*}H^1(E/k,\widehat T).$$
%By Shapiro's lemma, we have
%$$H^1(k,\widehat T\otimes \Z_P)\simeq H^1(L, \widehat T).$$
%Since $L\cap K^{cl}=k$, we have $$H^1(L.K^{cl}/L,
%\widehat T)\simeq H^1(K^{cl}/k,\widehat T).$$

By Shapiro's lemma and the assumption $L\cap K^{cl}=k$, we have the
following commutative diagram

 $$\xymatrix{
  H^1(k,\widehat T) \ar[dd]^{\cong}\ar[dr]_{\Res_{k/L}} \ar[r]^(0.45){j_{P*}}
                &H^1(k,\widehat{T}\otimes \Z_P) \ar[d]^{\cong}  \\
                &H^1(L,\widehat T) \ar[d]^{\cong}           \\
    H^1(K^{cl}/k,\widehat T)\ar[r]^{\cong}            &H^1(L.K^{cl}/L,\widehat T),}$$
which implies
\begin{equation*}
H^1(E/k,\widehat T\otimes
\Z_P)/j_{P*}H^1(E/k,\widehat T)=0.
\end{equation*}
\end{proof}

By the above theorem and Proposition \ref{compact-omega}, we have an
application for certain  multi-norm tori.

\begin{corollary}\label{Sha-T} Let $L$ and $K$ be field extensions over $k$. Let $T'$ be the torus over $k$ defined by $N_{L/k}(\Upsilon)N_{K/k}(\Xi)=1$.
If $L\cap K^{cl}=k$ or $K/k$ is cyclic, then
$\cyr{X}^2_\omega(\widehat T')=0$, in particular, if $k$ is a number
field, principal homogeneous spaces of $T'$ satisfy the Hasse
principle and weak approximation.
\end{corollary}
\begin{proof} Let $P(t)$ be an irreducible polynomial over $k$ such that $L\simeq k[t]/(P(t))$.
Let $X$ be the \emph{CTHS} partial compactification (see Section 1) for $P(t)$ and $K$. By Proposition \ref{compact-omega}, we have
$\cyr{X}^2_\omega(\widehat T')\subset H^1(k,\Pic(\overline {X}))$.
The statement follows from Proposition \ref{abel} for $K/k$ cyclic and Theorem
\ref{Br-1} for $L\cap K^{cl}=k$.

Suppose $k$ is a number field. Since $\cyr{X}^2_\omega(\widehat
T')=0$, the result follows from the fact that the Brauer-Manin
obstruction is the only one to the Hasse principle and
weak approximation for principal homogeneous spaces of tori (\cite[Theorem 8.12]{San} or \cite[Theorem 5.2.1]{Sko01}).
\end{proof}

\begin{rem*} The case that $K/k$ is cyclic in Corollary \ref{Sha-T} was proved
in an unpublished paper of Sansuc. If  $L^{cl}\cap K^{cl}=k$, this corollary was also proved by  Pollio and Rapinchuk (see \cite{PR}), and a more general result was proved in \cite{DW}.
\end{rem*}

\subsection{The case $P(t)$  irreducible and $K/k$ an abelian field  extension}

In this section, we investigate the case that $P(t)$ is irreducible and $K/k$ abelian. The main result is Theorem \ref{equal-X}, which in many situations gives good control of the quotient $\Br(X)/\Br(X^c)$.

\begin{lem} \label{form} Let $P(t)$ be an irreducible
polynomial over $k$, $K/k$ an abelian extension, and $X$ the CTHS partial compactification in Section 1.
Denote by $K^g$ the fixed field of $K$ by $g\in \Gamma_k$,
$L=k[t]/(P(t))$ and $L_g=L\cap K^g$.

Then any element of $\Br(X_{K^g})$ which comes from $\Br(X)$ has the form
$$\rho+\sum_{\sigma\in \Gal(L_g/k)}\psi(\sigma)(p(t)^{\sigma},\chi),$$ where
$\rho\in \Br(K^g)$, $p(t)$ is a fixed irreducible factor of $P(t)$ over $L_g$, $\chi$ is a fixed primitive character of $\Gal(K/K^g)$, $\psi\in \Hom(\Gal(L_g/k),\Z/b\Z)$ with
$b:=[K:K\cap (L.K^g)]$.
\end{lem}
\begin{proof}   The proof is based on Theorem \ref{CHS}, and the observation that $\Br(X_{K^g})=Br_{vert}(X_{K^g})$.
Since
$K/K^g$ is cyclic, we have
$$\cyr{X}^2_\omega (K^g,\widehat T)=\cyr{X}^2_\omega
(K/K^g,\widehat T)=0.$$ By the sequence (\ref{bs}), we have the commutative diagram:
$$
\xymatrix @R=12 pt{ 0 \ar[r] & H^1(k,\widehat{T}\otimes\Z_P)/j_{P*}H^1(k,\widehat T) \ar[d]\ar[r] &
H^1(k,\Pic(\overline X)) \ar[d]^{\Res}\\
0 \ar[r] & H^1(K^g,\widehat T\otimes
\Z_P)/j_{P*}H^1(K^g,\widehat T)
\ar[r]^(.62){\cong} & H^1(K^g,\Pic(\overline X)).}
$$

We want to investigate the restriction of $\Br(X)$ to $\Br(X_{K^g})$, $i.e.$, the restriction of $H^1(k,\Pic(\overline X))$ to $H^1(K^g,\Pic(\overline X))$. Since the image of $H^1(k,\Pic(\overline X))$ is $\Gamma_k$-invariant, we only need consider $[H^1(K^g,\widehat T\otimes \Z_P)/j_{P*}H^1(K^g,\widehat T)]^{\Gamma_k/\Gamma_{K^g}}$ by the above commutative diagram, noting that
$\Gamma_{K^g}$ is a normal subgroup of $\Gamma_k$ since $K/k$ is abelian.

We recall $L_g=L\cap K^g$. Write $P(t)=cp_1(t)\cdots
p_m(t)$, where $c\in k^\times$ and $p_i(t)\in L_g[t]$ monic and irreducible. Since $L_g\subset K$ and $K/k$ abelian, $L_g/k$ is Galois (abelian). Denote $\Delta:=\Gal(L_g/k)$, then $\Delta$ acts transitively on $\{p_i(t):1\leq i\leq m\}$. We fix $p(t)$ to be some $p_i(t)$ and let $L'=L_g[t]/(p(t))$. Therefore
$L\cong L'$ over $k$ and $$\Z_P\simeq \Z[L.K^g/K^g]\otimes \Z[\Delta]$$ as $\Gamma_k$-modules, which implies the natural isomorphism
$$H^1(K^g,\widehat T\otimes \Z_P)\simeq H^1(K^g,\widehat T\otimes \Z[L.K^g/K^g])\otimes \Z[\Delta]
 \simeq
H^2(L.K/L.K^g,\Z)\otimes \Z[\Delta].
$$
Denote
\begin{equation} \label{dif-R}R:=H^1(L.K/L.K^g,\Q/\Z) \simeq H^1(K/K\cap(L.K^g),\Q/\Z),\end{equation}
then $H^1(K^g,\widehat T\otimes \Z_P)\simeq R[\Delta]$.

The following diagram
$$\xymatrix{
  H^1(K^g,\widehat T) \ar[d]^{\cong} \ar[r]^{j_{P*}}
                &H^1(K^g,\widehat{T}\otimes \Z_P) \ar[d]^{\cong}  \\
   H^1(K/K^g,\Q/\Z)     \ar[r]^{\text{Res}}        &R[\Delta]}           $$
is commutative, where the map $\text{Res}$ is the diagonal map by restriction to $R$. Since $K/k$ is abelian, the image of $\text{Res}$ in $R[\Delta]$ is $R\cdot N$ and $R$ is a constant $\Gamma_k$-module, where $N=\sum_{\sigma\in \Delta}\sigma\in R[\Delta]$.
Then we have an isomorphism as $\Gamma_k$-modules
$$H^1(K^g,\widehat T\otimes \Z_P)/j_{P*}H^1(K^g,\widehat
T)\simeq R[\Delta]/(R\cdot N).$$ Then \begin{equation*} [H^1(K^g,\widehat T\otimes
\Z_P)/j_{P*}H^1(K^g,\widehat T)]^{\Gamma_k/\Gamma_{K^g}}\simeq (R[\Delta]/(R\cdot
N))^\Delta.\end{equation*}
The long exact sequence in Galois cohomology associated to the exact sequence
$$0\rightarrow R\rightarrow R[\Delta]\rightarrow
R[\Delta]/(R\cdot N)\rightarrow 0$$ gives  the exact sequence
$$0\rightarrow
R\xrightarrow{\cong} R\cdot N \rightarrow (R[\Delta]/(R\cdot N))^{\Delta}\rightarrow
\Hom(\Delta, R)\rightarrow 0.$$
Therefore $(R[\Delta]/(R\cdot
N))^{\Delta}\simeq \Hom(\Delta, R)$. We now describe this isomorphism more explicitly.

Let $\psi\in \Hom(\Delta, R)$, we define $$f(\psi):=\sum_{\sigma\in
\Delta}\psi(\sigma)\sigma\in R[\Delta]$$ and $\bar f(\psi)$ is the image of $f(\psi)$ in $R[\Delta]/R\cdot N$. Let $\sigma_0\in \Delta$, we have
$$\begin{aligned}\sigma_0(f(\psi))-f(\psi)
&=\sum_{\sigma}\psi(\sigma)\sigma_0\sigma-\sum_{\sigma}\psi(\sigma)\sigma \\
&=\sum_{\sigma}(\psi(\sigma)-\psi(\sigma_0))\sigma-\sum_{\sigma}\psi(\sigma)\sigma\\
&=-\psi(\sigma_0)\sum_{\sigma}\sigma\in R\cdot N,
\end{aligned}$$
hence $\bar f(\psi) \in (R[\Delta]/(R\cdot N))^{\Delta}$. On the other hand, we suppose $\bar f(\psi_1)=\bar f(\psi_2)$. Then
$\sum_{\sigma}(\psi_1(\sigma)-\psi_2(\sigma))\sigma\in R\cdot N$, $i.e.$, $\psi_1(\sigma)-\psi_2(\sigma)$ is a constant for all
$\sigma\in \Delta$. Let $e$ be the identity of $\Delta$, we have
$\psi_1(e)-\psi_2(e)=0$. Then $\psi_1=\psi_2$. Therefore we have
\begin{equation}\label{compute-R}
[R[\Delta]/(R\cdot N)]^{\Delta}=\{\bar f(\psi)\in R[\Delta]/R\cdot N: \psi\in \Hom(\Delta, R)\}.
\end{equation}

Since $K/K^g$ is cyclic, we can choose a primitive
character $\chi$ of $\Gal(K/K^g)$, then $\Res(\chi)$ is also a primitive
character of $\Gal(L.K/L.K^g)$, $i.e.$, a generator of $R$ (see (\ref{dif-R}) for the definition of $R$). Recalled
$b:=[K:K\cap (L.K^g)]=\# R$.
Rewrite the equation (\ref{compute-R}), we have
\begin{equation}\label{recompute-R}[R[\Delta]/(R\cdot N)]^{\Delta}=\{\bar f'(\psi)\in R[\Delta]/R\cdot N: \psi\in \Hom(\Delta, \Z/b\Z)\},\end{equation}
where $f'(\psi):=\sum_{\sigma\in
\Delta}\psi(\sigma)\Res(\chi)\cdot\sigma\in R[\Delta]$.

For any field $k$ with $char(k)=0$, using Grothendieck's purity theorem (cf.  \cite[Theorem 1.3.2]{CTSD94}), we can give a precise description of the map $\Br(X)\rightarrow H^1(k,\Pic(\overline X))$ by the observation 3 of \cite[Section 2]{Li69}.
%, which is induced by the natural map $H^2(k,\bar k(X)^\times)\rightarrow H^2(k,\bar k(X)^\times/\bar k^\times)$.
Then we have the following commutative diagram $$\xymatrix{\Br_{vert}(X_{K^g}) \ar[dr] \ar[r]^{\phi} &R[\Delta]\ar[d] \\ &H^1(K_g,\Pic(\overline X)),}$$ where $\phi$ is induced by the residue map of $\Br(K^g(t))$. Using Faddeev's sequence (cf. \cite[Section 1.2]{CTSD94}), the equation (\ref{recompute-R}) implies that any element of $\Br(X_{K^g})$ $(=\Br_{vert}(X_{K^g}))$ which comes from $\Br(X)$ has the form $$\begin{aligned} &\rho+\sum_{\sigma\in
\Gal(L_g/k)}\psi(\sigma)\Cor_{L^\sigma/k}(t-\sigma(\eta),\Res(\chi))\\
&=\rho+\sum_{\sigma\in
\Gal(L_g/k)}\psi(\sigma)(p(t)^{\sigma},\chi),\end{aligned}$$
where $\rho\in \Br(K^g)$ and $\eta$ is a root of $p(t)$ in $\bar k^\times$. This argument  is similar as in the remark of  \cite[p. 76]{CHS1}.
\end{proof}

\begin{thm} \label{equal-X} Suppose
$char(k)=0$. Let $P(t)$ be an irreducible polynomial over $k$,
$K/k$ an abelian extension and $L=k[t]/(P(t))$. Let $X$ be the
CTHS partial compactification for $P(t)$ and $K$ as in Section 1, $X^c$ a smooth compactification of $X$. Then:
\begin{enumerate}[(a)]
\item The quotient $\Br(X)/\Br(X^c)$ is $2$-torsion.

\item $\Br(X^c)=\Br(X)$ if one of the following conditions holds:
\begin{enumerate}[(1)]
\item $\Gal(K/k)\simeq \Z/2^i\times A$, where $i\geq 0$ and $A$ has odd order;

\item $[L\cap K:k]$ is odd;

\item $[L:L\cap K]$ is even;

\item There exists $s\geq 1$ such that $2^{s}\mid [L:k]$ and the cokernel of the
multiplication by $2^{s-1}$ on $\Gal(K/k)$ has odd order;

\item $L/k$ contains an abelian subfield $L'/k$ with
$\Gal(L'/k)\cong(\Z/2\Z)^3$.
\end{enumerate}
\end{enumerate}

\end{thm}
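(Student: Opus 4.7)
The plan is to build on Lemma \ref{form}: for each cyclic subgroup $\langle g\rangle \subset \Gal(E/k)$, the image of any element of $\Br(X)$ in $\Br(X_{E_g})$ is, modulo $\Br(E_g)$, of the form
$$\mathcal{A} \,=\, \sum_{\sigma \in \Gal(L_g/k)} \psi(\sigma)\,\bigl(p_1(t)^\sigma,\, \chi\bigr), \qquad \psi \in \Hom(\Gal(L_g/k),\, R),$$
with $R = \Hom(\Gal(K/K\cap L.K_g), \Q/\Z)$ and $\chi$ a primitive character of $\Gal(K/K_g)$. I would show that $\mathcal{A}$ (respectively $2\mathcal{A}$) extends to $\Br(X^c_{E_g})$ by computing its residues at the codimension-one points of $X^c_{E_g}\setminus X_{E_g}$; since the condition of lying in $\Br(X^c)$ can be checked after base change to each such cyclic cover, this controls $\Br(X)/\Br(X^c)$.

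The crucial residues are at divisors $D$ of $X^c_{E_g}$ above $t = \infty$ on $\P^1_{E_g}$. For such a $D$ with $e_D = v_D(1/t)$, each polynomial $p_1(t)^\sigma$ is monic of common degree $\deg(p_1) = [L:L_g]$, so $v_D(p_1^\sigma) = -e_D \deg(p_1)$, independent of $\sigma$. Hence
$$\res_D(\mathcal{A}) \,=\, -\,e_D \deg(p_1)\,\cdot\, S(\psi)\,\cdot\, \chi|_{k(D)}, \qquad S(\psi) := \sum_{\sigma \in \Gal(L_g/k)} \psi(\sigma).$$
The key observation, yielding part (a) at once, is that $S(\psi)$ is $2$-torsion in $R$: the involution $\sigma \mapsto \sigma^{-1}$ on the abelian group $\Gal(L_g/k)$ gives
$$S(\psi) \,=\, \sum_\sigma \psi(\sigma) \,=\, \sum_\sigma \psi(\sigma^{-1}) \,=\, -S(\psi),$$
so $2S(\psi)=0$ in $R$. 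Consequently $\res_D(2\mathcal{A}) = 0$ at every $D$ over $\infty$, and the residues at all divisors of $X\setminus U$ already vanish because $\mathcal{A} \in \Br(X)$. One then needs to check that any remaining divisor of $X^c\setminus X$ over a zero of $P(t)$ contributes nothing beyond what is already forced by $\mathcal{A} \in \Br(X)$ and the partial $T$-equivariant compactification present in $X$; letting $g$ range over all cyclic subgroups of $\Gal(E/k)$ then gives $2\Br(X)\subset \Br(X^c)$.

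For part (b), in each case the goal is to strengthen $2 S(\psi)=0$ to the full vanishing of $\res_D(\mathcal{A})$. Condition (3) is immediate: since $L_g \subset L\cap K$, one has $[L:L\cap K]$ divides $\deg(p_1)$, forcing $\deg(p_1)$ even and thus $\deg(p_1) S(\psi) = (\deg(p_1)/2)(2S(\psi)) = 0$. Condition (4) gives the same evenness: the hypothesis that the cokernel of multiplication by $2^{s-1}$ on $\Gal(K/k)$ has odd order bounds the 2-part of $[L_g:k]$ by $2^{s-1}$, whereas $2^s \mid [L:k]$, so $\deg(p_1) = [L:k]/[L_g:k]$ is even. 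Conditions (1), (2), (5) each translate into arithmetic constraints on the order $b = \#\Gal(K/K\cap L.K_g)$, the order of $\Gal(L_g/k)$, and the order of the image of the restriction $\chi|_{k(D)}$; in each case one combines $2S(\psi)=0$ with a subfield-lattice argument using the abelianness of $K/k$ to conclude $\res_D(\mathcal{A})=0$.

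The main obstacle is the delicate case-by-case analysis under (b), in particular (1), (2), and (5), which require a careful bookkeeping of how $\psi\in \Hom(\Gal(L_g/k),R)$ interacts with the restriction $\chi|_{k(D)}$ across the intermediate fields $K_g, L_g, K\cap L.K_g$. A secondary difficulty, relevant for both (a) and (b), is the verification that divisors of $X^c\setminus X$ above the zeros of $P(t)$ contribute no residue beyond what is already controlled by the hypothesis $\mathcal{A}\in \Br(X)$, which is where the specific construction of $X$ from $V$ and the partial compactification $U\times^T T^c$ (and not a general birational model) enters essentially.
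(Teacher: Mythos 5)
Your overall strategy matches the paper's: compute residues of $\Br(X)$ classes at discrete valuation rings of $k(X)$, after restricting to the cyclic covers $X_{E_g}$ via Lemma~\ref{form}. Your involution argument for part~(a) --- that $S(\psi)=\sum_\sigma\psi(\sigma)$ satisfies $S(\psi)=-S(\psi)$ because $\sigma\mapsto\sigma^{-1}$ permutes $\Gal(L_g/k)$ --- is a genuinely slicker route to $2$-torsion than what the paper does: there one writes $\psi(\Gal(L_g/k))=\langle u\rangle\subset\Z/b$, computes $S(\psi)=\#\Ker(\psi_\mathcal{B})\cdot u\bigl(1+\cdots+(b/u-1)\bigr)$, and then uses the identity $md'=bd$ together with $m\chi=0$.

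However, there are genuine gaps on the way to part~(b). The slick argument discards exactly the extra information the paper needs: the explicit form exposes the factors $\#\Ker(\psi_\mathcal{B})$ and $(b/u-1)/2$, and the paper's case analysis shows that in each of (1)--(5) one of $\delta$, $d'=[L:L\cap K]$, $\#\Ker(\psi_\mathcal{B})$, $(b/u-1)$ is even. Your sketch for (3) is fine, but your argument for (4) is not correct as written: you assert that the hypothesis ``cokernel of multiplication by $2^{s-1}$ on $\Gal(K/k)$ is of odd order'' bounds the $2$-part of $[L_g:k]$ by $2^{s-1}$. For $s=1$ that hypothesis is vacuous (the cokernel of multiplication by $2^{0}$ is always trivial) and gives no bound at all; for $s\geq 2$ it actually forces $\Gal(K/k)$, hence $[L\cap K:k]$, to have odd order, collapsing (4) into case (2). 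The paper instead establishes $2\mid\#\Ker(\psi_\mathcal{B})$ by a careful comparison of the $2$-adic valuations of $[K:K\cap(L.K_g)]$ and $[L_g:k]$. Cases (1), (2), (5) are left as gestures; each requires the refined structure of $S(\psi)$ (for instance, (2) uses that the image of $\psi_\mathcal{B}$ has odd order so $b/u$ is odd, and (5) produces a $(\Z/2)^2$-quotient of $\Gal(L_g/k)$ to force $2\mid\#\Ker(\psi_\mathcal{B})$).

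Finally, you defer the residues at DVRs with $\mathrm{ord}(t)\geq 0$ to a ``secondary difficulty.'' This is an essential step and is not automatic from $\mathcal{B}\in\Br(X)$, since the relevant valuations may have centers on $X^c\setminus X$. The paper handles it directly (Case~(a)): if the residue were nonzero with $\mathrm{ord}_{A_g}(t)\geq 0$, then exactly one $p_1(t)^{\sigma_0}$ has positive order (the conjugates being coprime), that order equals $\mathrm{ord}_{A_g}(P(t))$, and the residue $\psi(\sigma_0)\,\mathrm{ord}_{A_g}(P(t))\,\bar\chi$ vanishes because $P(t)=N_{K/k}(\Xi)$; this is a calculation, not an appeal to the construction of $X$.
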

\begin{proof} Let $k(X)$ be the function field of $X$. For each
discrete valuation ring $A$ which contains $k$ and with fraction field
$k(X)$ and residue field $\kappa_A$, there is a residue map
$$\partial_A: \Br(k(X))\rightarrow H^1(\kappa_A,\Q/\Z).$$ Since $char(k)=0$, Grothendieck's purity theorem (cf.  \cite[Theorem 1.3.2]{CTSD94}) gives
$$\Br(X^c)=\bigcap_{A}\Ker(\partial_A)\subset \Br(k(X)),$$
where $A$ runs through all discrete valuation ring as above.

Let $\mathcal{B}\in \Br(X)$, first we will prove $2\mathcal{B}\in
\Br(X^c)$, $i.e.$,  for any $g\in
\Gal(\bar\kappa_A/\kappa_A)$,
$$2\partial_A(\mathcal{B})(g)= 0\in \Q/\Z$$
by Grothendieck's purity theorem.

Since $k\subset
\kappa_A$, we can fix an embedding $\bar k\hookrightarrow \bar
\kappa_A$. Let $K^g$ be the fixed field of $K$ by $g$ and $L_g=L\cap K^g$. Let $f$ be the natural map
$\Br(X)\rightarrow \Br(X_{K^g})$. By Lemma \ref{form}, there exists
$\psi_\mathcal{B}\in \Hom(\Gal(L_g/k),\Z/b)$ such that
\begin{equation} \label{rep:B}f(\mathcal{B})=\rho+\sum_{\sigma\in
\Gal(L_g/k)}\psi_\mathcal{B}(\sigma)(p(t)^\sigma, \chi)
\end{equation}
where
$\rho\in \Br(K^g)$, $b=[K:K\cap (L.K^g)]$, $p(t)$ is a fix irreducible factor of $P(t)$ over $L_g$, and $\chi$ is a fixed primitive character of $\Gal(K/K^g)$.

Since $X$ is geometrically integral, the function field $K(X)
\cong k(X) \otimes_k K$ of $X\times_k K$ is finite and
unramified over $k(X)$. We can extend $A$ to a discrete valuation
ring $A_{K}$ of $K(X)$ with residue field
$\kappa_{A_{K}}=\kappa_A . K$. Indeed, the completion of
$k(X)$ for the given valuation is isomorphic to $\kappa_A((\pi))$,
where $\pi$ is a uniformizer. Considering the valuation given by $\pi$
on $(K . \kappa_A)((\pi))$ and using $K(X) = k(X) \otimes_k
K$, since $K(X)$ can naturally embed into $(K . \kappa_A)((\pi))$ and its image is dense in the latter, there is a discrete valuation ring $A_{K}\subset K(X)$ and its residue field is just $K . \kappa_A$.
%we see that the completion $\widehat{K(X)}$ of $K(X)$ with
%respect to $A_{K}$ is $\kappa_{A_{K}}((\pi))$. This defines a
%discrete valuation on $K(X)$ with valuation ring $A_{K}$.

For any intermediate field $M$ of $K/k$, we have similarly the
valuation ring $A_M$ of $M(X)$ with residue field $\kappa_{A_M}$. We
write $A_g$ for $A_{K^{g}}$.

By \cite[Proposition~1.1.1]{CTSD94}, we have the commutative
diagram
\begin{equation*}
    \begin{CD}
      \Br(k(X)) @>\partial_A>> H^1(\kappa_A,\Q/\Z)\\
      @V\Res_{k/K^{g}}VV @VV\Res_{\kappa_A/\kappa_{A_g}}V\\
      \Br(K^{g}(X)) @>\partial_{A_g}>> H^1(\kappa_{A_g},\Q/\Z).
    \end{CD}
\end{equation*}
Since $\kappa_{A_g} = \kappa_A . K^{g}$, we have $g \in
\Gal(\bar \kappa_A/\kappa_{A_g})$. Hence
\begin{equation*}
    \partial_A(\mathcal{B})(g)=\partial_{A_g}(f(\mathcal{B}))(g).
\end{equation*}
Therefore we only need to show $2 \partial_{A_g}(f(\mathcal{B})) = 0$.

We consider two cases:
\begin{enumerate}[(i)]
\item The case $ord_{A_g}(t)\geq 0$.\\
If $ord_{A_g}(p(t)^{\sigma})=0$ for all $\sigma \in \Gal(L_g/k)$, it is clear $\partial_{A_g}(f(\mathcal{B}))=0$ by (\ref{rep:B}).
If there is a $\sigma_0 \in \Gal(L_g/k)$
such that  $ord_{A_g}(p_1(t)^{\sigma_0})> 0$. It is clear that
$$ord_{A_g}(p_1(t)^\sigma)=0\text{ for }\sigma\neq \sigma_0$$ since
$p_1(t)^\sigma$ and $p_1(t)^{\sigma_0}$ are relatively prime. Then
we have $$ord_{A_g}(p_1(t)^{\sigma_0})=ord_{A_g}(P(t)).$$ Therefore
$$\begin{aligned}\partial_{A_g}(f(\mathcal{B}))=\psi_\mathcal{B}(\sigma_0)ord_{A_g}(p_1(t)^{\sigma_0})\cdot \bar \chi
=\psi_\mathcal{B}(\sigma_0)ord_{A_g}(P(t))\cdot \bar \chi =0
\end{aligned}$$ by the equation $N_{K/k}(\Xi)=P(t)$.

\item The case $ord_{A_g}(t)=\delta< 0$.\\
Denote $d:= deg(p(t))$. Then
$$ord_{A_g}(p(t)^{\sigma})=\delta\cdot deg(p(t)^{\sigma})=\delta d.$$
Therefore
$$\begin{aligned}\partial_{A_g}(f(\mathcal{B}))=\sum_{\sigma}\psi_\mathcal{B}(\sigma)ord_{A_g}(p(t)^{\sigma})\cdot \bar \chi
=\delta d\sum_{\sigma}\psi_\mathcal{B}(\sigma)\cdot \bar
\chi.\end{aligned}$$  Denote $\psi_\mathcal{B}(\Gal(L_g/k)):=\left<u\right>\subset
\Z/b$ with $u\mid b$. Then we have
$$\begin{aligned}\partial_{A_g}(f(\mathcal{B}))&=\delta d\cdot \# \Ker(\psi_\mathcal{B})\cdot u(1+2+\cdots +(b/u-1))\cdot \bar \chi\\
&=\delta d\cdot \# \Ker(\psi_\mathcal{B})\cdot b(b/u-1)/2\cdot \bar
\chi.\end{aligned}$$
Denote
$$\begin{aligned}m&:=[K:K^g]=[K:K\cap (L.K^g)]\cdot [K\cap (L.K^g):K^g]\\
&=b\cdot [K\cap (L.K^g):K^g].\end{aligned}$$ Note that $[L.K^g:K^g]=[L:L_g]=d$ , we denote \begin{equation} \label{def-d'}
\begin{aligned}d':=&d/[K\cap
(L.K^g):K^g]=[L.K^g:K\cap (L.K^g)]\\=&[L.K:K]=[L:L\cap
K].\end{aligned}
\end{equation} So we have
\begin{equation}\label{for:res}
\partial_{A_g}(f(\mathcal{B}))=\delta  d'\cdot \# \Ker(\psi_\mathcal{B})\cdot m(b/u-1)/2\cdot \bar
\chi.
\end{equation}
Since $m\cdot \chi=0$, we have
$$2\partial_{A_g}(f(\mathcal{B}))=0,$$
this concludes the proof that $2\mathcal{B} \in \Br(X^c)$.
\end{enumerate}

Our work shows that, if $2\mid \delta d'\cdot \#
\Ker(\psi_\mathcal{B})\cdot (b/u-1)$ in the formula (\ref{for:res}), then
$\partial_A(\mathcal{B})=0$ for any $A$, hence
$\mathcal{B}\in \Br(X^c)$. We will prove the part (b) case by case:
\begin{enumerate}[i)]
\item If $i=0$, then  the order $\Gal(K/k)$ is odd, hence $b/u$ ($=[L_g:k]$) is odd. Therefore
$2\mid (b/u-1)$.

Suppose $i>0$, write $\Gal(K/k)=H_1\times H_2$, where $H_1$ is cyclic
and of order $2^i$ and $H_2$ has odd order. By the K\"{u}nneth
formula (\cite[p. 96]{NSW}), we have $$H^2(K/k,
\Q/\Z)=\bigoplus_{i+j=2}H^i(H_1,H^j(H_2,\Q/\Z)).$$ Since the orders
of $H_1$ and $H_2$ are relatively prime, we have
$$H^1(H_1,H^1(H_2,\Q/\Z))=0.$$ Obviously $H^2(H_1,\Q/\Z)=0$ since $H_1$ is
cyclic. Then
$$H^2(K/k, \Q/\Z)=H^2(H_2, \Q/\Z).$$ We have
$$H^1(k,\Pic(\overline X))\simeq\cyr{X}^2_\omega (\widehat T )_P$$ by the sequence (\ref{bs}) and Proposition \ref{abel}. Denote $h=\# H_2$, then $$h\cdot H^1(k,\Pic(\overline X))=0$$ by the fact $\cyr{X}^2_\omega (\widehat T )\simeq H^3(K/k, \Z)\simeq H^2(K/k, \Q/\Z)$.

 Let $\mathcal{B} \in \Br(X)$. Since
$\Br(X)/\Br_0(X)\hookrightarrow H^1(k,\Pic(\overline X))$ and $\partial_A(\Br_0(X))=0$, one
has
$$h
\partial_A(\mathcal{B})=0\in H^1(\bar \kappa_A/\kappa_A,\Q/\Z).$$
On the other hand we have  $$2
\partial_A(\mathcal{B})=0\in H^1(\bar \kappa_A/\kappa_A,\Q/\Z)$$ by the proof of part a). Since $(2,h)=1$, we have  $
\partial_A(\mathcal{B})=0$, hence $\mathcal{B}\in
\Br(X^c)$.

\item Since $L\cap K/k$ has odd degree, then $L_g/k$ also has odd degree, $i.e.$, $b/u$ is
odd, then $2\mid (b/u-1)$.

\item Obviously $2\mid d'$ by (\ref{def-d'}).

\item If $2^s\nmid[L\cap K:k]$, then $[L:L\cap K]$ is even, hence $2\mid d'$. So we may assume $2^s\mid
[L\cap K:k]$.

Denote by $v_2(n)$ the $2$-adic valuation for any $n\in \Z$, and
denote $i:=v_2([L\cap K:L_g])$. Obviously
\begin{equation} \label{v2:Lg} v_2([L_g:k])=v_2([L\cap K:k])-v_2([L\cap K:L_g])\geq s-i.\end{equation}
%Since
%$[(L\cap K).K^g:K^g]=[L\cap K:L_g]$, we have
%\begin{equation}\label{degree:1}[K:(L\cap K).K^g]=[K:K^g]/[L\cap K:L_g].\end{equation}
Since $K/K^g$ is cyclic and the assumption for
 $\Gal(K/k)$, we have $v_2([K:K^g])\leq s-1$. Recall $b=[K:K\cap
(L.K^g)]$, and note that $$(L\cap
K).K^g\subset K\cap (L.K^g)\text{ and }[(L\cap K).K^g:K^g]=[L\cap K:L_g],$$ we have
\begin{equation} \label{v2:b} \begin{aligned} v_2(b)&\leq v_2 ([K:(L\cap K).K^g])=v_2 ([K:K_g])-v_2 ([(L\cap K).K^g:K^g])\\
&\leq s-i-1.
\end{aligned}
\end{equation}
By (\ref{v2:Lg}) and (\ref{v2:b}), the kernel of
$\psi_{\mathcal{B}}: \Gal(L_g/k)\rightarrow \Z/b\Z$ has order divisible by $2$, $i.e.$, $2\mid \#
\Ker(\psi_\mathcal B)$.

\item If $L'\not \subset L\cap K$, then $2\mid[L:L\cap K]$, $i.e.$,
$2\mid d'$. So we may assume $L'\subset L\cap
K$. Since $K/K^g$ is cyclic, $L\cap K/L_g$ is also cyclic. Then there is
a subfield $L''$ of $L_g\cap L'$ with
$\Gal(L''/k)\cong\Z/2\Z\times \Z/2\Z$, hence $2\mid
\#\Ker(\psi_\mathcal{B})$. \qedhere
\end{enumerate}
\end{proof}

Let $X$ be the variety in the following proposition, then $\Br(X)/\Br_0(X)=\Z/n$. However, if $n$ is even,  $\Br(X^c)/\Br_0(X^c)=\Z/(n/2)$ by the following proposition, hence $\Br(X^c) \neq \Br(X)$ in this case.
%The following result will show that sometimes $\Br(X^c) \neq \Br(X)$.
\begin{prop} \label{Q_1} Suppose $char(k)=0$ and $H^3(k,\bar k^\times)=0$.
Let $n$ be an integer. Let
$K/k$ be an abelian extension with $\Gal(K/k)=\Z/n\times \Z/n$.
Suppose $L\subset K$ and $L/k$ is cyclic of order $n$. Let $P(t)$ be an irreducible polynomial over k
such that $L\cong k[t]/(P(t))$. Let $X$ be the CTHS partial compactification for $P(t)$ and $K$ (see Section 1 for definition). Then
$$\Br(X^c)/\Br_0(X^c)=\begin{cases} \Z/(n/2) \ \ &\text{if } n  \text{ is even},\\
\Z/n\ \ &\text{if } n \text{ is odd}.
\end{cases}$$
\end{prop}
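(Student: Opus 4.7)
The plan is to combine Proposition~\ref{abel}(c), Theorem~\ref{equal-X}, and a direct group-cohomology computation. Since $L/k$ is cyclic (hence Galois) and $L \subset K$, the minimal Galois extension $E/k$ containing $K$ and splitting $P(t)$ is simply $E = K$. Write $G := \Gal(K/k) \cong (\Z/n)^2$ and $H_L := \Gal(K/L) \cong \Z/n$.

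By Proposition~\ref{abel}(c), $\Br(X)/\Br_0(X) \cong \cyr{X}^2_\omega(\widehat T)_P$. I would compute this from the short exact sequence $0 \to \Z \to \Z[G] \to \widehat T \to 0$, where the first map sends $1$ to $N_G = \sum_{g\in G} g$. For any subgroup $C \subset G$, $\Z[G]$ is a free $\Z[C]$-module of rank $[G:C]$, so Shapiro's lemma gives $H^i(C, \Z[G]) = 0$ for $i > 0$. Hence $H^2(C, \widehat T) \cong H^3(C, \Z)$ for every subgroup $C$; for cyclic $C$ this vanishes, so $\cyr{X}^2_\omega(\widehat T) = H^2(G, \widehat T) \cong H^3(G, \Z)$. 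Universal coefficients together with $H_2((\Z/n)^2, \Z) = \Z/n$ give $H^3((\Z/n)^2, \Z) \cong \Z/n$. By the same mechanism, Shapiro's lemma identifies $H^2(G, \widehat T \otimes \Z_P) \cong H^2(H_L, \widehat T) \cong H^3(H_L, \Z) = 0$ (using cyclicity of $H_L$). Thus $\cyr{X}^2_\omega(\widehat T \otimes \Z_P) = 0$ and $\cyr{X}^2_\omega(\widehat T)_P = \cyr{X}^2_\omega(\widehat T) \cong \Z/n$, so $\Br(X)/\Br_0(X) \cong \Z/n$.

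If $n$ is odd, $\Gal(K/k) = (\Z/n)^2$ has odd order, so condition~(1) of Theorem~\ref{equal-X}(b) applies with $i = 0$ and gives $\Br(X^c) = \Br(X)$; therefore $\Br(V^c)/\Br_0(V^c) \cong \Z/n$. If $n$ is even, Theorem~\ref{equal-X}(a) says that $\Br(X)/\Br(X^c)$ is $2$-torsion, so the subgroup $\Br(X^c)/\Br_0(X^c) \subset \Z/n$ is either $\Z/n$ or the unique index-$2$ subgroup $\Z/(n/2)$. To rule out the first possibility, I would produce a class $\mathcal B \in \Br(X) \setminus \Br(X^c)$ via Lemma~\ref{form}: for a well-chosen $g \in G$ of even order and $\psi \in \Hom(\Gal(L_g/k), \Z/b)$ surjecting onto the $2$-Sylow of $\Z/b$ with odd kernel, the residue of the symbol $\sum_\sigma \psi(\sigma)(p_1(t)^\sigma, \chi)$ at a discrete valuation $A$ with $\mathrm{ord}_A(t)$ odd, computed exactly as in the proof of Theorem~\ref{equal-X} (noting that $d' = [L:L\cap K] = 1$ in our setting), equals $\tfrac{1}{2}\delta m\,\#\Ker(\psi)(b/u - 1)\,\bar\chi$; for this choice of data the coefficient is not divisible by the order of $\bar\chi$, so the residue is nonzero in $H^1(\kappa_A, \Q/\Z)$.

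The principal obstacle lies in this second case: the ``easy'' divisibility exploited in the proof of Theorem~\ref{equal-X} collapses because $d' = 1$, so a ramified class must be constructed by hand. The delicate point is selecting $g$ so that $v_2(|g|)$, $v_2([L:L_g])$ and $v_2([K:L\cdot K_g])$ are compatible with the needed parity conditions, and verifying that the resulting formal element of $[H^1(E/K_g, \widehat T \otimes \Z_P)/j_{P*}H^1(E/K_g, \widehat T)]^{G/H}$ genuinely lifts to $\Br(X)$ through the Hochschild--Serre identifications underlying Lemma~\ref{form}.
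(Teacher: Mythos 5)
Your computation of $\Br(X)/\Br_0(X)$ and the case of odd $n$ are correct and essentially match the paper: $E=K$, the sequence $0\to\Z\to\Z[G]\to\widehat T\to 0$ plus Shapiro give $\cyr{X}^2_\omega(\widehat T)\cong H^3(G,\Z)\cong\Z/n$ and $\cyr{X}^2_\omega(\widehat T\otimes\Z_P)=0$, and for odd $n$ Theorem~\ref{equal-X}(b)(1) with $i=0$ finishes immediately.

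The even case, however, has a genuine gap which you yourself flag but do not close. You propose to write down an expression of the shape allowed by Lemma~\ref{form}, with a hand-picked $\psi$ surjecting onto a $2$-Sylow with odd kernel, and then ``verify that it lifts to $\Br(X)$.'' Lemma~\ref{form} only constrains what restrictions of Brauer classes \emph{can} look like; it gives no existence statement, and lifting an arbitrary such expression is precisely the hard point. The paper sidesteps this entirely by arguing in the opposite direction: it starts from an honest class $A\in\Br(X)/\Br_0(X)\cong\Z/n$ of order $2^s$ and \emph{deduces} that its restriction has the required form with a surjective $\psi$. The crucial ingredient that makes this deduction work is the injectivity of the map $f\colon H^1(K/k,\Pic(X_K))\to\prod_{\langle g\rangle}H^1(K/K_g,\Pic(X_K))$, i.e. the vanishing $\cyr{X}^1_\omega(\Pic(\overline{X^c}))=\cyr{X}^2_\omega(\widehat T')=0$ for the bi-norm torus $N_{L/k}(\Xi_1)N_{K/k}(\Xi_2)=1$, established via Proposition~\ref{compact-omega} together with Corollary~\ref{Sha-T} (applied with $L$ and $K$ in the symmetric roles, using that $L/k$ is cyclic). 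Your proposal makes no use of these results, and without them you cannot conclude that some $f_{g_0}(A)$ retains full $2^s$-order, hence cannot guarantee the surjectivity of $\psi$ that your residue computation needs.

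There is also a second, smaller imprecision worth noting: the residue computation is not carried out at an arbitrary valuation of $X_{E_g}$ for a vague ``$g$ of even order,'' but at a carefully chosen divisor. The paper builds the specific intermediate field $\Theta=K_{g_0}.\Theta_0=L'.\Theta_0$ (with $\Gal(K/\Theta)\cong\Z/2^s$ cyclic, $L\subset\Theta$, so that the factors $q_i(t)^\sigma$ become linear), changes variables $u=1/t$, and uses the geometrically irreducible divisor $D$ at $u=0$. It then checks $\bar\Theta\cap\kappa_{A_D}=\Theta$ to ensure the nonzero class $2^{s-1}\bar\chi$ survives in $H^1(\kappa_{A_D},\Q/\Z)$. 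Your formula $\tfrac12\delta m\#\Ker(\psi)(b/u-1)\bar\chi$ does reduce to the paper's answer once all parameters are pinned to this $\Theta$ (one finds $b=m=2^s$, $\#\Ker\psi=n_0$, $\delta=-1$, giving $2^{s-1}\bar\chi$), so the arithmetic is consistent; but the choice of $\Theta$ and the verification that the residue field is large enough are parts of the argument, not afterthoughts. In short: the computation of $\Br(X)/\Br_0(X)$ and the odd case are fine, but the even case as written is a plan with an acknowledged hole, and the paper's resolution of that hole (Proposition~\ref{compact-omega} plus Corollary~\ref{Sha-T}, then the field $\Theta$) is genuinely different from, and not recoverable from, what you have sketched.
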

\begin{proof} Since
$K/L$ is cyclic, we have
$$\cyr{X}^2_\omega(\widehat T)_P=\cyr{X}^2_\omega(\widehat T)\simeq H^3(\Z/n\times \Z/n,
\Z)\simeq \Z/n,$$
the last equation follows from  the K\"{u}nneth
formula (\cite[p. 96 ]{NSW}).
By the sequence (\ref{bs}) and Proposition \ref{abel}, we have
$$H^1(k,\Pic(\overline X))\simeq \cyr{X}^2_\omega(\widehat T)_P\simeq\Z/n.$$
Since $H^3(k,\bar k^\times)=0$, we have $$\Br(X)/\Br_0(X)\simeq
H^1(k,\Pic(\overline X))\simeq  \Z/n.$$

If $n$ is odd, the result immediately follows from the case (a) in
Theorem \ref{equal-X}. Then we only need to consider the case $n$ is
even.

Write $n=2^sn_0$ where $n_0$ is odd and $s \geq 1$.
 By Theorem \ref{equal-X}, the
cokernel of  $\Br(X^c)/\Br_0(X^c)$ in $\Br(X)/\Br_0(X)$ is killed by
2. Then we only need to show that an element $B$ in
$\Br(X)/\Br_0(X)$ with order $2^s$ is not
contained in $\Br(X^c)/\Br_0(X^c)$.

Let $\left<g\right>$ be the cyclic subgroup of $\Gal(K/k)$ generated by $g$. Let
$K^g$ be the fixed field of $\left<g\right>$ in $K$. We have the
following morphism
$$f_g: H^1(K/k,\Pic(X_K))\rightarrow H^1(K/K^g,\Pic(X_K)).$$
Let $T'$ be the $k$-torus defined by $N_{L/k}(\Xi_1)\cdot
N_{K/k}(\Xi_2)=1$. Let $f:=\prod_{\left<g\right>}f_g$. Since
$$\Ker(f)= \cyr{X}^1_\omega (K/k,\Pic(X_K))=\cyr{X}^1_\omega (\Pic(\overline X))=\cyr{X}^2_\omega (\widehat T')=0$$
by Proposition \ref{compact-omega} for the last second equality and Corollary \ref{Sha-T} for the last equality, $f(B)$ has order $2^s$.

%${ and }L_g=L\cap K^g$
%
%%Let $L'/k$ be
%%the unique subfield of $L/k$ of degree $2^s$.
%
%We claim there exists $\left<g\right>\subset \Gal(K/k)$ such that $\left<g\right>$ fixes $L'$ and has order divisible by $2^s$.  If $\left<g\right>$
%does not fix $L'$, then $2^s\nmid [L_g:k]$. By Lemma \ref{form}, the
%order of $f_g(B)$ is not divisible by $2^s$. If the order of
%$\left<g\right>$ is not divisible by $2^s$, $i.e.$, the order of $\Gal(K/K^g)$ is not divisible by $2^s$, then the order of
%$f_g(B)$ is also not divisible by $2^s$ by Lemma \ref{form}.

There exists $\left<g_0\right>$ such that $f_{g_0}(B)$ has order divisible by $2^s$ since $f(B)$ has order $2^{s}$. Denote $L_{g_0}:=L\cap K^{g_0}$. Then both $K/L.K^{g_0}$  and $L_{g_0}/k$ have order divisible by $2^s$ by Lemma \ref{form}, since $f_{g_0}(B)$ has order divisible by $2^s$. Let $\Theta_0/k$ be the unique subfield of $K$
with $\Gal(\Theta_0/k)=\Z/n_0\times\Z/n_0$ and let
$\Theta=K^{g_0}.\Theta_0$.  Then we can see $\Theta=L .\Theta_0$ and $K/\Theta$ has degree $2^s$.

By the inflation-restriction sequence, the kernel of the map
$$H^1(K/K^{g_0},\Pic(X_K))\rightarrow H^1(K/\Theta,\Pic(X_K))$$
has odd order since $\Theta/K^{g_0}$ has odd degree. Since $f_{g_0}(B)\in H^1(K/K^{g_0},\Pic(X_K))$ has
order divisible by $2^s$ and $K/\Theta$ has degree $2^s$, $f_\Theta(B)\in H^1(K/\Theta,\Pic(X_K))$ has order $2^s$ where $f_\Theta:H^1(K/k,\Pic(X_K))\rightarrow H^1(K/\Theta,\Pic(X_K))$.

Let $\mathcal B \in \Br(X)$ which lifts $B$. Let
$f_\Theta': \Br(X)\rightarrow \Br(X_{\Theta})$ be the natural map. By Lemma \ref{form}, we have
$$f_{\Theta}'(\mathcal B)=\rho+\sum_{\sigma\in
\Gal(L/k)}\psi_{\mathcal B} (\sigma)(t-\sigma(\eta), \chi)$$ where
$\rho \in \Br(\Theta)$, $\eta$ is a root of $P(t)$ in $L$,
$\psi_{\mathcal B}\in \Hom(\Gal(L/k),\Z/2^s)$, and $\chi\in
\Hom(\Gal(K/\Theta),\Z/2^s)$ is primitive. Since $f_\Theta(B)$ has order $2^s$,
we deduce $\psi_{\mathcal B}$ has order divisible by $2^s$. Since $\Gal(L/k)$ is cyclic of order
$n=2^s\cdot n_0$ with $n_0$ odd, then $\psi_\mathcal B$ has order $2^s$.

The variety $X_{\Theta}$ contains an open affine $\Theta$-subvariety
$U$ defined by
$$\prod_{i}N_{K/\Theta}(\Xi_i)=P(t) \text{ and }t\neq 0,$$
where $K\otimes_k \Theta\cong \prod_{i} K$ and $K/\Theta$ has degree $2^s$. Let $Q(u)=u^nP(1/u)$. Let $W$ be the
smooth affine $\Theta$-variety defined by
$$\prod_{i}N_{K/\Theta}(\Xi'_i)=Q(u).$$
Obviously the open subvariety of $W$ defined by $u\neq 0$ is
isomorphic to $U$ by the map $u\mapsto 1/t, \Xi'_1 \mapsto
\Xi_1/t^\mu, \Xi'_i\mapsto \Xi_i \text{ for }i\geq 2$, where
$\mu=n/2^s$.

Let $D$ be the divisor of $W$ over $\Theta$ defined by $u=0$.
It is easy to see that the divisor $D$ is geometrically irreducible. So
we have $\bar \Theta \cap \kappa_D=\Theta$, where $\kappa_D$ is the function field of $D$. The local ring $A_D$
associated with $D$ is a discrete valuation ring with
$ord_{A_D}(t)=-1$ and $\kappa_D=\kappa_{A_D}$. So $ord_{A_D}(t-\sigma(\eta))=-1$ for all
$\sigma$. Therefore
$$\partial_{A_D}(f_\Theta '(\mathcal B))=-\sum_{\sigma\in \Gal(L/k)}\psi_{\mathcal
B}(\sigma)\cdot \bar \chi.$$ Since $\psi_\mathcal B$ has order $2^s$, then $\psi_{\mathcal B}:\Gal(L/k)\rightarrow \Z/2^s$ is surjective, hence
$$\sum_{\sigma\in \Gal(L/k)}\psi_{\mathcal B}(\sigma)=n_0\cdot
2^{s-1}(2^{s}-1)\equiv -n_0\cdot 2^{s-1} \in \Z/2^s.$$ Since $\chi$
has order $2^s$, we have
$$\sum_{\sigma\in \Gal(L/k)}\psi_{\mathcal B}(\sigma)\cdot \chi=-n_0\cdot 2^{s-1}\cdot \chi \neq
0\in \Hom(\Gal(K/\Theta),\Q/\Z).$$ Since $\bar \Theta\cap
\kappa_{A_D}=\Theta$, we have
$$\partial_{A_D}(f_\Theta'(\mathcal B)) \neq 0\in H^1(\kappa_{A_D},\Q/\Z),$$
Then $f_\Theta '(\mathcal B)\notin \Br(X^c_\Theta)$, hence $\mathcal B\notin \Br(X^c)$.
\end{proof}

\begin{rem*}
In the case $n=2$, we get $\Br(X^c)=\Br_0(X^c)$, this answers
the
final
question
in \cite[Questions on p. 82 and p. 83]{CHS1}.
\end{rem*}

\bigskip

\subsection{A case with $K/k$ a Galois extension and $P(t)$ with multiple rational roots}

The part of this section is to prove Proposition \ref{brauer-split}, which is motivated by the
Question  (a) in \cite[p. 82]{CHS1}.

Let $G$ be a finite group and $M$ a $G$-module. Define
$$\cyr{X}^2_\omega(G,
M):=\bigcap_{g\in G}\Ker[H^2(G,M)\rightarrow H^2(\left<g
\right>,M)].$$
\begin{lem}\label{p-finite} Let $p$ be a prime, $G=\Z/p\times \Z/p$ and $\Z/p$ the trivial $G$-module. Then $$\cyr{X}^2_\omega(G,
\Z/p)=\begin{cases}0 \ \ &\text{if } p=2,\\
\Z/p\ \ &\text{if } p \text{ is odd},\end{cases}$$
\end{lem}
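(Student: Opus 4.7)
My plan is to compute $H^2(G,\Z/p)$ directly via the K\"unneth formula and then inspect the restriction to each cyclic subgroup. Write $G=\langle a\rangle\times\langle b\rangle$ with each factor of order $p$, and recall the cohomology of $\Z/p$ with $\Z/p$ coefficients: for odd $p$ it is $\Z/p[x]\otimes\Lambda(y)$ with $\deg y=1$, $\deg x=2$, $x=\beta y$; for $p=2$ it is $\Z/2[y]$ with $\deg y=1$. The K\"unneth formula then gives a three-dimensional $\F_p$-vector space $H^2(G,\Z/p)$ with explicit basis $\{x_1,x_2,\,y_1\cup y_2\}$ when $p$ is odd (where $y_1,y_2\in H^1(G,\Z/p)$ are dual to $a,b$ and $x_i=\beta y_i$), and basis $\{y_1^2,y_2^2,y_1y_2\}$ when $p=2$.

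Next, for a nontrivial cyclic subgroup $C=\langle a^ib^j\rangle$ of $G$, restriction sends $y_1\mapsto i\tilde y$ and $y_2\mapsto j\tilde y$, where $\tilde y$ generates $H^1(C,\Z/p)$; since restriction commutes with the Bockstein, it sends $x_1\mapsto i\tilde x$ and $x_2\mapsto j\tilde x$, where $\tilde x=\beta\tilde y$ generates $H^2(C,\Z/p)$.

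For odd $p$, a general class $\alpha=c_1x_1+c_2x_2+c_3\,y_1\cup y_2$ restricts to $(c_1i+c_2j)\tilde x+c_3ij\,\tilde y^2=(c_1i+c_2j)\tilde x$, using $\tilde y^2=0$ in $H^*(\Z/p,\Z/p)$ for $p$ odd. Testing $(i,j)=(1,0)$ and $(i,j)=(0,1)$ forces $c_1=c_2=0$, while $c_3$ is unconstrained; hence $\cyr{X}^2_\omega(G,\Z/p)=\Z/p$, generated by $y_1\cup y_2$.

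For $p=2$, a class $c_1y_1^2+c_2y_2^2+c_3y_1y_2$ restricts to $(c_1i^2+c_2j^2+c_3ij)\tilde y^2=(c_1i+c_2j+c_3ij)\tilde y^2$ over $\F_2$; testing the three nontrivial pairs $(1,0)$, $(0,1)$, $(1,1)$ forces $c_1=c_2=c_3=0$, so $\cyr{X}^2_\omega(G,\Z/2)=0$. The argument is almost entirely bookkeeping; the only conceptual point (and the step one has to get right) is the vanishing of $\tilde y^2$ in the cohomology ring of a cyclic group for odd $p$, which is precisely what allows the antisymmetric class $y_1\cup y_2$ to survive restriction to every cyclic subgroup when $p$ is odd, and which fails for $p=2$ where no sign convention is available.
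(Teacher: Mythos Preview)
Your proof is correct and takes a genuinely different, more direct route than the paper's. The paper works module-theoretically: it introduces an auxiliary $G$-module $\Delta=(R[G/H_1]\oplus R[G/H_2])/(R\cdot N)$ with $R=\Z/p$, identifies $\cyr{X}^2_\omega(G,\Z/p)$ with an intersection over the remaining cyclic subgroups $H_3$ of kernels $H^1(G,\Delta)\to H^1(H_3,\Delta)$, and then computes these kernels via inflation--restriction and an explicit calculation of the norm map $N_{G/H_3}$ on $\Delta^{H_3}$. Only at the end does it invoke K\"unneth, to check $H^1(G,\Delta)\cong\Z/p$.

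By contrast, you exploit the ring structure of $H^*(G,\F_p)$ from the start: K\"unneth gives explicit generators, restriction to each cyclic subgroup is a ring map determined by its effect on degree-one classes, and the entire dichotomy comes down to the single relation $\tilde y^2=0$ in $H^*(\Z/p,\F_p)$ for odd $p$ (forced by graded-commutativity) versus $\tilde y^2\neq 0$ for $p=2$. Your argument is shorter and more conceptual; the paper's approach, while more laborious here, is closer in spirit to the module-theoretic techniques used elsewhere in the paper to handle $\cyr{X}^2_\omega$ for character modules of tori, which likely explains the author's choice.
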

\begin{proof} Denote $R:=\Z/p$. Let $H_1=\left<g_1\right>,H_2=\left<g_2\right>$ and $G=H_1\times
H_2$. Let $N=N_1+N_2$ with $N_i=\sum_{\sigma\in G/H_i}\sigma$ for
$i=1,2$. Denote
$$\Delta:=(R[G/H_1]\oplus R[G/H_2])/(R\cdot N).$$ Then we have the
following exact sequence
$$0\rightarrow R\rightarrow R[G/H_1]\oplus R[G/H_2]\rightarrow \Delta\rightarrow 0.$$
Since the restriction map $$H^1(G,\Z/p)\rightarrow
H^1(H_1,\Z/p)\times H^1(H_2,\Z/p)\text{ is surjective},$$ we have the
following exact sequence
$$0\rightarrow H^1(G,\Delta)\rightarrow H^2(G,\Z/p)\rightarrow
H^2(H_1,\Z/p)\times H^2(H_2,\Z/p).$$ Let $H$ be a non-trivial
cyclic subgroup of $G$ and $H\neq H_1,H_2$. It is easy to verify
that $R[G/H_i]\simeq R[H]$ as $H$-modules for $i=1,2$. Then
$$H^i(H, R[G/H_1]\times R[G/H_2])=0 \text{ if } i\geq 1.$$
Therefore we have the following commutative diagram
$$
\xymatrix @R=12 pt @C=10pt{
0 \ar[r] &
H^1(G,\Delta) \ar[r] \ar[d] &
H^2(G,\Z/p)   \ar[r] \ar[d] & H^2(H_1,\Z/p)\oplus H^2(H_2,\Z/p) \ar[d] \\
0 \ar[r] &  H^1(H,\Delta) \ar[r] &  H^2(H,\Z/p)    \ar[r] & 0 .
}
$$
Therefore we have $$\cyr{X}^2_\omega(G, \Z/p)\cong \bigcap_{H\neq
H_1,H_2}\Ker[H^1(G,\Delta)\rightarrow H^1(H,\Delta)].$$ By the
inflation-restriction sequence, we have
$$H^1(G/H,\Delta^{H})=\Ker[H^1(G,\Delta)\rightarrow H^1(H,\Delta)].$$
Since $G/H$ is cyclic and $\Delta$ is finite, using Tate cohomology one has $$\mid
H^1(G/H,\Delta^{H})\mid=\mid\hat
H^0(G/H,\Delta^{H})\mid=\mid\Delta^{G}/N_{G/H}(\Delta^{H})\mid.$$
It is easy to verify that $\Delta^{G}\simeq \Z/p$ by the exact
sequence $$0\rightarrow \Z/p\rightarrow R[G/H_1]\oplus
R[G/H_2]\rightarrow \Delta\rightarrow 0.$$ Furthermore, we have
$$0\rightarrow \Z/p\rightarrow \Z/p\cdot N_1\oplus \Z/p\cdot N_2\rightarrow \Delta^{H} \rightarrow H^1(H,\Z/p)\rightarrow 0.$$
By a similar argument as for the equation (\ref{compute-R}), $\Delta^{H}$ is generated by $$R\cdot N_1\oplus R\cdot
N_2\text{ and }\{\sum_{g\in H}\chi(g)(\bar g,\bar g): \chi \in
H^1(H,\Z/p)\},$$ where $(\bar g,\bar g)\in G/H_1\times G/H_2$.
Since $G/H$ trivially acts on $R\cdot N_1\oplus R\cdot N_2$, we
have
$$N_{G/H}(R\cdot N_1\oplus R\cdot N_2)=p\cdot(R\cdot N_1\oplus R\cdot
N_2)=0.$$ Note that $H_1\simeq G/H$ since $G=\Z/p\times \Z/p$ and $p$ is
a prime. Denote $$u=\sum_{g\in H}\chi(g)(\bar g,\bar g)\in \Delta,
\text{ where } \chi \text{ is a primitive character}.$$ Then
$$\sigma(u)= \sum_{g\in H}\chi(g)(\bar g,\overline {\sigma
g})\text{ where } \sigma\in H_1.$$ Hence we have
$$\begin{aligned}N_{G/H}(u)&=\sum_{\sigma\in H_1}\sum_{g\in H}\chi(g)(\bar g,\overline {\sigma
g})=\sum_{g\in H}\chi(g)\sum_{\sigma\in H_1}(\bar g,\overline
{\sigma g})\\
&=\sum_{g\in H}\chi(g)(0,\sum_{\sigma\in H_1}\overline {\sigma
g})=(0,\sum_{g\in H}\chi(g) N_2)\\
&=\begin{cases}(0,N_2) \ \ &\text{if } p=2,\\
0\ \ &\text{if } p \text{ is odd}.\end{cases}
\end{aligned}$$
Therefore we have $$\mid
H^1(G/H,\Delta^{H})\mid=\mid\Delta^{G}/N_{G/H}(\Delta^{H})\mid=\begin{cases}0 \ \ &\text{if } p=2,\\
p\ \ &\text{if } p \text{ is odd}.\end{cases}$$ If $p=2$, then we have
$\cyr{X}^2_\omega(G, \Z/p)=0$.

By the K\"{u}nneth formula (\cite[p. 96 ]{NSW}), we have $$H^2(G,\Z/p)\simeq
\bigoplus_{i+j=2} H^i(H_1,H^j(H_2,\Z/p)).
$$ Therefore $$H^1(G,\Delta)\simeq \Ker[H^2(G,\Z/p)\rightarrow H^2(H_1,\Z/p)\oplus H^2(H_2,\Z/p)]\simeq \Z/p.$$
Suppose $p$ is odd. By the above argument,
$$\Ker[H^1(G,\Delta)\rightarrow H^1(H,\Delta)]\simeq
H^1(G/H,\Delta^{H})\simeq \Z/p.$$ Therefore the restriction map
$H^1(G,\Delta)\rightarrow H^1(H,\Delta)$  is a zero map, hence $\cyr{X}^2_\omega(G, \Z/p)\simeq H^1(G,\Delta)\simeq \Z/p$.
\end{proof}

\begin{prop} \label{brauer-split} Suppose $H^3(k,\bar k^\times)=0$. Let $P(t)=c\prod_{i=1}^m(t-e_i)^{d_i}$ with $c\in k^\times$, $e_i\in k$ distinct and  $d:=\text{gcd}(d_1,\cdots,d_m)$.
Let $K/k$ be a Galois extension with $\Gal(K/k)$  killed by $d$.
Let $X$ be the CTHS partial compactification for $P(t)$ and $K$
(see Section 1 for definition), and $X^c$ a smooth compactification of $X$. Then
$$\Br(X^c)/\Br_0(X^c)\cong \cyr{X}^2_\omega(K/k, \Z/d).$$
\end{prop}
\begin{proof} Let $\left<g\right>\subset \Gal(K/k)$
be a cyclic subgroup. Let $K^g$ be the fixed field of
$\left<g\right>$ in $K$. We have the following morphism
$$
f_g: \Br(X)/\Br_0(X) \to \Br(X_{K^g})/\Br_0(X_{K^g}).
$$
Since $[K:K^g]\mid d$ by our assumption, the $K^g$-variety
$X_{K^g}$ is $K^g$-birationally isomorphic to $\A^1_{K^g}\times Y$, where $Y$ is
a $K^g$-variety defined by the equation $c=\prod_{i}N_{K/K^g}(\Xi_i)$ and $K\otimes_k K^g\cong \prod_{i} K$. We know
$H^1(K^g,\Pic(\overline{Y^c}))=0$ since $K/K^g$ is cyclic, which implies $\Br(X^c_{K^g})/\Br_0(X^c_{K^g})=0$.

If $B\in
\Br(X^c)/\Br_0(X^c)$, we have $f_g(B)\in
\Br(X^c_{K^g})/\Br_0(X_{K^g})$, hence $f_g(B)=0\in
\Br(X_{K^g})/\Br_0(X_{K^g}^c)$. Let $f: =\prod_{\left<g\right>}f_g$, so
$$\Br(X^c)/\Br_0(X^c)\subset \Ker(f).$$
Let $T'$ be the torus over $k$ defined by $t_1^{d_1}\cdots
t_m^{d_m}N_{K/k}(\Xi)=1$.     By Proposition \ref{compact-omega}, we
know $\Ker(f)\simeq \cyr{X}^2_\omega (K/k,\widehat T' )$ is
contained in $\Br(X^c)/\Br_0(X^c)$ since $H^3(k,\bar k^\times)=0$. So $\Br(X^c)/\Br_0(X^c)=\Ker(f)$ and we only need to show
$$\cyr{X}^2_\omega (K/k,\widehat T' )\simeq \cyr{X}^2_\omega
(K/k,\Z/d).$$

Let $N=\sum_{\sigma\in \Gal(K/k)}\sigma$. We know $$\widehat
T'\simeq[(\Z\oplus \cdots \oplus \Z)\oplus \Z[K/k]]/\Z\cdot ((d_1,\cdots,
d_m)+ N).$$ Denote
$$M:=(\Z\oplus \cdots \oplus \Z)/\Z\cdot (d_1,\cdots, d_m).$$
This exact sequence $$0\rightarrow
\Z[K/k]\rightarrow \widehat T'\rightarrow M\rightarrow 0$$ implies $\cyr{X}^2_\omega (K/k,\widehat T' )\simeq \cyr{X}^2_\omega
(K/k,M)$. Since $M\cong\Z/d\times M'$ where $M'$ is a free
$\Z$-module, we have $\cyr{X}^2_\omega (K/k,M)\simeq
\cyr{X}^2_\omega (K/k,\Z/d)$ by the fact $\cyr{X}^2_\omega (K/k,M')=0$.
\end{proof}

By Lemma \ref{p-finite} and Proposition \ref{brauer-split}, we have
the following result:

\begin{corollary}\label{Q_2} Suppose $H^3(k,\bar k^\times)=0$. Let $p$ be a prime and $P(t)=c\prod_{i=1}^m(t-e_i)^p$ with $c\in k^\times$, $e_i\in k$ distinct. Let $K/k$ be an abelian
extension with $\Gal(K/k)=\Z/p\times \Z/p$. Then $$\Br(X^c)/\Br_0(X^c)=\begin{cases}0 \ \ &\text{if } p=2,\\
\Z/p\ \ &\text{if } p \text{ is odd}.\end{cases}$$
\end{corollary}
\begin{rem*}
For $p=2$, this corollary answers   Question  (a) in \cite[p.
82]{CHS1}.
\end{rem*}

\section{Rational points under Schinzel's hypothesis}\label{Schinzelsection}

Let $k$ be a number field. Let $\Omega_k$ be the set of all places of $k$. %and $\infty_k\subset \Omega_k$ all archimedean places.
Let $P(t)$ be a nonzero polynomial over $k$.
In this section we mainly consider the question whether the
Brauer-Manin obstruction is the only obstruction to the Hasse
principle and weak approximation for a smooth and proper model of
the $k$-variety defined by
\begin{equation*}N_{K/k}(\Xi)=P(t)
\end{equation*}
where $\Xi$ is a "variable" in $K$, $K/k$ is a finite field extension
and $N_{K/k}$ is a formal norm associated with the extension for the
variable $\Xi$.

A positive answer was given by Colliot-Th\'el\`ene, Sansuc and
Swinnerton-Dyer in their remarkable paper (\cite{CTSS87}) when $P(t)$ has degree $\leq 4$ and $K/k$ is a quadratic extension. A positive
answer is also known for the following cases: the extension $K/k$ has degree 3 and the
polynomial $P(t)$ has degree at most 3 (\cite{CTSal89}); the polynomial $P(t)$ having just two roots in $k$ and $K/k$ arbitrary (\cite{CHS1,HBSK,SJ}); $k=\Q$, $P(t)$ has degree $2$ and $K/\Q$ arbitrary  (\cite{BH,DSW}); $k=\Q$, $P(t)$ is split in $\Q$ and $K/\Q$ arbitrary (\cite{BM,BMS,HSW}).

Conditional results have been obtained under Schinzel's hypothesis
(H). Under this hypothesis, Colliot-Th\'el\`ene, Skorobogatov and
Swinnerton-Dyer (\cite[Thm. 1.1 (e)]{CTSSD98}) proved that the
Brauer-Manin obstruction is the only obstruction to the Hasse
principle and weak approximation for smooth projective models of
varieties defined by an equation $N_{K/k}(\Xi)=P(t)$ when the
extension $K/k$ is abelian and norm equations $N_{K/k}(\Xi)=c$ (for
any $c\in k^\times$) satisfy the Hasse principle and weak approximation,
for instance when $K/k$ is cyclic. %%
Their result is more general, they consider smooth projective varieties over $k$ which admit a fibration over the projective line
such that

(a)  A certain  abelianity condition on the splitting field of the singular fibres
holds (Condition (i) in \cite[Thm. 1.1]{CTSSD98}).

(b) The Hasse principle and weak approximation hold on the smooth
fibres.

 In this section we shall handle three new classes  of fibrations over the
 projective line whose generic fibre is birationally a principal
 homogeneous space under a torus. In each of these classes
one of conditions (a) or (b)  is not in general fulfilled.

The following observation is important for the proof of Theorem \ref{rational-1} and \ref{cycle-1}.
\begin{lem}\label{Brauer-trivial}
Let $k$ be a number field and $K/k$ a finite field extension. Let $P(t)$
be a nonzero polynomial over $k$. Let $X$ be the CTHS partial compactification (see Section 1 for definition) of the equation $N_{K/k}(\Xi)=P(t)$  and $X_K:=X\times_k K$. Then $\Br(X_K)=\Br_0(X_K)$.
\end{lem}
\begin{proof} Denote by $T$ the torus $R^1_{K/k}(\G_m)$.  Let $T_K:= T\times_k K$, then $\widehat{T_K}$ is a permutation $\Gamma_K$-module, hence $\cyr{X}^2_\omega (K,\widehat{T_K})=0$ and $H^1(K,\Z_P \otimes \widehat{T_K})=0$. Therefore $H^1(K,\Pic(\overline{X}))=0$ by the sequence (\ref{bs}), hence $\Br(X_K)=\Br_0(X_K)$.
\end{proof}

\begin{thm}\label{rational-1}
Let $k$ be a number field and $K/k$ an abelian extension. Let $P(t)$
be a nonzero polynomial over $k$ and $T=R^1_{K/k}(\G_m)$. Suppose
$\cyr{X}_\omega^2(\widehat T)_P=\cyr{X}_\omega^2(\widehat T)$ (see
Section 1 for definition). Assume Schinzel's hypothesis holds. Then the
Brauer-Manin obstruction to the Hasse principle and weak
approximation for rational points is the only obstruction for any smooth proper model of
the variety over $k$ defined by the equation
\begin{equation} \label{equation:1}
N_{K/k}(\Xi)=P(t).
\end{equation}
\end{thm}
\begin{proof} It is sufficient to prove the statement for any given model. Let $V$ be the smooth locus of the affine $k$-variety
defined by (\ref{equation:1}).  Let $Y$ be a smooth
compactification of $V$ with a projection $p: Y\rightarrow \P^1_k$
defined by $(\Xi,t)\mapsto t$.

We assume that $Y$
has points in all completions of $k$, and we are given a finite set
$S$ of places of $k$ which contains all archimedean places, and points $P_v\in Y(k_v)$ for $v\in S$. We
assume that there is no Brauer-Manin obstruction to weak
approximation for $(P_v)_{v\in S}$. This means that
we may complete the family $(P_v)_{v\in S}$ to a family $(P_v)_{v\in
\Omega_k}$ such that
\begin{equation}\label{Br}\text{for any }\mathcal {B}\in \Br(Y), \sum_{v\in
\Omega_k}inv_v(\mathcal{B}(P_v))=0\in \Q/\Z.\end{equation} From this,
we want to deduce that there exists $P\in Y(k)$ as close as we wish
to each $P_v\in Y(k_v)$ for $v\in S$.

Let $U_0\subset \A^1_k$ be a non-empty Zariski open set defined by
$P(t)\neq 0$.  By the continuity of the pairing
of $Y(k_v)$ with elements of the Brauer group, we can replace each
$P_v$ by a close enough point such that condition (\ref{Br}), which
only involves finitely many classes in the Brauer group, will not be
affected. Therefore we can assume that all points appearing in (\ref{Br})
lie in $p^{-1}(U_0)$ by the implicit function theorem.

Let $Q_v:=p(P_v)\in U_0(k_v)\subset \A^{1}(k_v)$. We first choose a
$k$-point $Q_0\in U_0(k)$, close enough to each $Q_v$ for each
archimedean place $v$, and such that $Q_0$ is different from each
$Q_v$ for $v\in S$. Let $u=1/(t-Q_0)$. Denote
$Q(u):=u^{deg(P(t))}P(1/u+Q_0)$ and $n:=[K:k]$. Choose $l$ sufficient large
such that $ln-deg(P(t))>0$ and denote $P^{(1)}(t):=t^{ln-deg(P(t))}Q(t)$. Then we get a new smooth
affine variety  $W$ defined by
$$N_{K/k}(\Xi)= P^{(1)}(t)\neq 0,$$ and it is
isomorphic to the open subvariety of $p^{-1}(U_0)$ defined by $t\neq
Q_0$ which contains all $P_v$ for $v\in S$. Let $W_0\subset \A^1_k$ be a
non-empty Zariski open subset defined by $P^{(1)}(t)\neq 0$. Then $p$
projects $W$ to $W_0$.

We are now looking for a point $Q\in W_0(k)$ with associated
coordinate $\lambda\in k$, such that $\lambda$ is very close to each
$Q_v$ for $v\in S$, $v$ finite, $\lambda$ is big enough at each
archimedean place of $k$ ($Q$ is close enough to the infinite point), and such that the fibre $W_Q$ of $p$ has a $k$-rational point.

Let $X$ be the \emph{CTHS} partial compactification (see Section 1
for definition) of $V$.
We know $H^1(k, \Pic(\overline X))\rightarrow \cyr{X}^2_\omega (\widehat T
)$ is surjective by our assumption and the sequence (\ref{bs}). Then we can choose
a finite subset $B\subset \Br(X)$ such that the image of $B$ by the
composite map $$\Br(X) \rightarrow H^1(k, \Pic(\overline X))\rightarrow
\cyr{X}^2_\omega (\widehat T)$$ is $\cyr{X}^2_\omega
(\widehat T)$.

Choose $\omega_1,\cdots,\omega_n\in \frak o_K$ to be a basis of $K$ over $k$, where $\frak o_K$ (resp. $\frak o_k$) is the ring of integers of $K$ (resp. $k$).
Choose $S'$ to be a finite set of places of $k$ containing all archimedean places such that $P^{(1)}(t) \in \frak o_{S'}[t]$ and $\omega_1,\cdots,\omega_n$ generate $\frak o_K\otimes_{\frak o_k} {\frak o_{k_v}}$  over $\frak o_{k_v}$ for any $v \notin S'$. Let $\mathcal W \subset \A_{\frak o_{S'}}^{n+2}$ be the smooth affine integral model of $W$ over $\frak o_{S'}$ defined by
$$N_{K/k}(x_1 \omega_1+\cdots+x_n\omega_n)=P^{(1)}(t) \text{ and } y\cdot P^{(1)}(t)=1.$$
We enlarge $S'$ such that for any $\mathcal B\in B$, $\mathcal B$ can be extended to $\Br(\mathcal W)$.

Now we enlarge $S$ such that $S\supset S'$, and that $S$ contains all the original places at which
we want to approximate, and that $S$ also contains the places
associated in Hypothesis $H_1$ (\cite[p. 71]{CTSD94}) to the
polynomials of $P^{(1)}(t)$, all ramified places of $K/k$ and all
places $v$ such that for some $\mathcal B \in B$, $inv_w(j_K(\mathcal B))\neq 0$ where
$w$ is a place of $K$ over $v$ and $j_K: \Br(X) \to \Br(X_K)$, noting that $\Br(X_K)=\Br_0(X_K)$ by Lemma \ref{Brauer-trivial}.

Let $\{p_i(t)\mid 1\leq i\leq m\}$ be all irreducible terms of
$P^{(1)}(t)$ over $k$ and let $$A=\{(p_i(t),\chi)\in \Br(W): 1\leq i\leq
m, \chi
 \in \Hom(\Gal(K/k),\Q/\Z)
 \}.$$
Since condition (\ref{Br}) holds,
there is no Brauer-Manin obstruction on $Y$ to weak approximation for
$(P_v)_{v\in S}$. According to Harari's formal lemma (see
\cite{Ha94}), we may find a finite set $S_1$ of places of $k$,
containing $S$, and points $P_v\in W(k_v)$, $v\in S_1$, and which
extend the given family
$$P_v \in W(k_v),v\in S,$$ such that for each $\mathcal B\in
A\cup B$
%\begin{equation}\label{Ha:3.1}
%\end{equation}
\begin{equation}\label{Ha:3.1}
\sum_{v\in S_1}inv_v(\mathcal B(P_v))=0.
\end{equation}

Now apply Hypothesis $(H_1)$ (\cite[Proposition
4.1]{CTSD94}), we thus find $\lambda\in k$ close enough to each
$Q_v$ for all finite places $v\in S_1$, $\lambda$ integral away from
$S_1$, and $\lambda$ as large as one wants at all archimedean places, so
that:

(i) The fibre $W_\lambda$ of $p$ contains a $k_v$-point $P_v'$ which
is as close as we wish to $P_v$ for all places $v\in S_1$, and such
that
$$inv_v(\mathcal B(P_v'))=inv_v(\mathcal B(P_v))$$ for each $\mathcal B\in A\cup B$ and $v\in
S_1$.

(ii) For each irreducible term $p_i(t)$ of the polynomial
$P^{(1)}(t)$, there exists a place $v_i$ such that $p_i(\lambda)$ is
a uniformizer of $\frak o_{k_{v_i}}$ and  is a unit in $\frak o_{k_v}$ if $v\not \in S_1$
and $v\neq v_i$.

For each $\mathcal B\in A\cup B$, by (\ref{Ha:3.1}) and point (i) we have
\begin{equation}\label{sum:1}
0=\sum_{v\in S_1}inv_v(\mathcal B(P_v))=\sum_{v\in S_1}inv_v(\mathcal B(P_v')).
\end{equation}
In particular, for each $\mathcal B=(p_i(\lambda),\chi)\in A$, we
have
$$\sum_{v\in S_1}inv_v(p_i(\lambda),\chi)=0.$$
Since the sum of all local invariants $inv_v$ of $(p_i(\lambda),\chi)$
over $k$ vanishes (global class field theory), we deduce
\begin{equation}\label{equ:S-1}\sum_{v\not\in S_1}inv_v(p_i(\lambda),\chi)=0.
\end{equation}
We have $inv_{v}(p_i(\lambda),\chi)=0$ for $v\not \in S_1$ and
$v\neq v_i$, since $p_i(\lambda)$ is a unit at $v$ by point (ii)
above. Then (\ref{equ:S-1}) implies
\begin{equation}\label{equ:split-1}
inv_{v_i}(p_i(\lambda),\chi)=0.
\end{equation}

Since $p_i(\lambda)$ is a uniformizer at $v_i$ and $\chi$ runs
through all characters of $\Gal(K/k)$ and $K/k$ is abelian, we know $K/k$ is totally
split at $v_i$ by (\ref{equ:split-1}). Therefore the fibre $W_\lambda$ contains a
$k_{v_i}$-point $P_{v_i}'$ for all places $v_i$. Obviously the fibre
$\mathcal W_\lambda$ contains an $\frak o_{k_v}$-point $P_v'$ for all places $v\not
\in S_1$ and $v\neq v_i$ for any $i$, since all $p_i(\lambda)$ are units. So the
fibre $W_\lambda$ contains such a $k_v$-point $P_v'$ for all places $v$
of $k$.

In the following we will show  $inv_v(\mathcal B(P_v'))=0$ for $v\not \in S_1$ and $\mathcal B\in B$, hence \begin{equation}\label{Br-sum:1}
\sum_{v\in \Omega_k}inv_v(\mathcal B(P_v'))=\sum_{v\in S_1}inv_v(\mathcal
B(P_v'))=0
\end{equation}
by (\ref{sum:1}).
Let $v\not \in S_1$ and $v\neq v_i$ for $1\leq i\leq m$. We have $inv_v(\mathcal B(P_v'))=0$ since $\mathcal B$ can be extended to $\mathcal W$ and $P_v' \in \mathcal W(\frak o_{k_v})$.
 If $v$ is some $v_i$, then $K/k$ is totally split at $v$ by the above arguments. Since $\Br(X_K)=\Br_0(X_K)$ by Lemma \ref{Brauer-trivial}, then the image of $\mathcal B$ in $\Br(X_{k_{v}})$ is trivial by the choice of $S$ ($S_1\supset S$), hence $inv_v(\mathcal B(P_v'))=0$.

In the following, we will show the image of $B$ by the induced map $\Br(X)/\Br(k)\rightarrow \Br(X_\lambda)/\Br(k)$ is
surjective, then (\ref{Br-sum:1}) implies there is no Brauer-Manin obstruction on $W_\lambda$ for $(P_v')_{v\in \Omega_k}$, hence we can find  a $k$-point on $W_\lambda$
to approximate $(P_v')_{v\in S}$ by the property of the
principal homogeneous space of tori (\cite[Theorem 8.12]{San} or \cite[Theorem 5.2.1]{Sko01}).

The map $H^1(k,\Pic(\overline X))\rightarrow
\cyr{X}^2_\omega(\widehat{T})_{P}$ in the sequence (\ref{bs}) is
induced by the map $\Pic(\overline X)\to \Pic(X_{\bar{\eta}})$, where $\eta$ is the
generic point of $\P^1_k$.
We have the following commutative diagram
$$
\xymatrix @R=15pt{
\Pic(X_{\bar k[t]_{(t-\lambda)}}) \ar[r] \ar@{=}[d] & \Pic(X_{\bar k(t)}) \ar[d]^{i}\\
\Pic(X_{\bar k[t]_{(t-\lambda)}}) \ar[r] & \Pic(\overline {X_\lambda}). }
$$
Therefore we have the following commutative diagram
$$
\xymatrix @R=15pt{
\Pic(\overline{X}) \ar[r] \ar@{=}[d] & \Pic(X_{\bar k(t)}) \ar[d]^{i}\\
\Pic(\overline{X}) \ar[r] & \Pic(\overline {X_\lambda}). }
$$
By Lemma 2.1(\cite{CHS1}), we know the morphism $i :\Pic(X_{\bar k(t)}) \rightarrow \Pic(\overline {X_\lambda})$ is an isomorphism as $\Gamma_k$-module. Since $\Pic(X_{\bar\eta})$ is torsion-free, we have the isomorphism
$$H^1(k(t), \Pic(X_{\bar\eta}))\cong H^1(k, \Pic(X_{\bar k(t)}))\cong H^1(k,\Pic(\overline {X_\lambda})).$$
Therefore we have the following commutative diagram
$$
\xymatrix @R=15pt{
H^1(k,\Pic(\overline{X})) \ar[r] \ar@{=}[d] & H^1(k(t),\Pic(X_{\bar \eta})) \ar[d]^{\cong}\\
H^1(k,\Pic(\overline{X})) \ar[r] & H^1(k,\Pic(\overline {X_\lambda})),}
$$
hence the image of $B$ by the induced map $\Br(X)/\Br(k)\rightarrow \Br(X_\lambda)/\Br(k)$ is
$\Br(X_\lambda)/\Br(k)$.
\end{proof}

\begin{rem*} \begin{itemize}
\item The condition $\cyr{X}_\omega^2(\widehat T)_P=\cyr{X}_\omega^2(\widehat T)$ is equivalent to the condition that the natural morphism $$\Br(X)/\Br(k)\rightarrow \Br(X_\eta)/\Br(k(\eta))$$
is surjective, where $X$ is the CTHS partial compactification with the projection $p: X\rightarrow \A^1$, and $\eta$ is the generic point of $\A^1$.
\item Many similar results to Theorem \ref{rational-1} have been proved for more general varieties which have a fibration to $\P^1$ , see \cite{CL, Liang3,Sm}.
\end{itemize}
\end{rem*}

As a direct application of this theorem, we have the following
corollary.
\begin{corollary} \label{examples}
Let $n$ be a positive integer and let $K/k$ be an abelian extension with $\Gal(K/k)=\Z/n\times \Z/n$.
Let $P(t)$ be an irreducible polynomial over k and $L=k[t]/(P(t))$. Assume that $L$ contains a cyclic subfield of $K$  with degree $n$.
Let $V$ be the smooth affine variety
over $k$ defined by $$N_{K/k}(\Xi)=P(t).$$ Assume Schinzel's
hypothesis holds, then the Brauer-Manin obstruction to the Hasse
principle and weak approximation is the only obstruction for any
smooth proper model of $V$.
\end{corollary}
\begin{proof} Let $L=k[t]/(P(t))$ and $T$ the torus $R^1_{K/k}(\G_m)$. By the exact sequence $$0\rightarrow \Z \rightarrow \Z[K/k]\rightarrow \widehat T\rightarrow 0,$$
we have $$\cyr{X}^2_\omega(\widehat T)=H^3(K/k,\Z) \text{ and }\cyr{X}^2(\widehat T)_P=\Ker[H^3(K/k,\Z)\rightarrow H^3(K/L\cap K,\Z)].$$
Since $K/L\cap K$ is cycic, we have $$H^3(K/L\cap K,\Z)=H^1(K/L\cap K,\Z)=0,$$
 hence $\cyr{X}^2(\widehat T)_P=\cyr{X}^2(\widehat T)$.
\end{proof}

In the following theorem the condition
$\cyr{X}_\omega^2(\widehat T)_P=\cyr{X}_\omega^2(\widehat T)$ is not in general fulfilled.

\begin{thm}\label{rational-2}
Let $k$ be a number field and $P(t)$ a nonzero polynomial over $k$. Assume
Schinzel's hypothesis holds. Then the Brauer-Manin obstruction to
the Hasse principle and weak approximation for rational points is the only obstruction
for any smooth proper model of the variety over $k$ defined by the
equation
$$(x_1^2-ax_2^2)(y_1^2-by_2^2)(z_1^2-abz_2^2)=P(t),$$
where $a,b\in k^\times$.
\end{thm}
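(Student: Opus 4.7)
I view the variety as $N_{K/k}(\Xi) = P(t)$ for the étale $k$-algebra $K = k(\sqrt{a}) \times k(\sqrt{b}) \times k(\sqrt{ab})$, a product of three quadratic extensions all contained in the biquadratic abelian extension $M = k(\sqrt{a}, \sqrt{b})$ with $\Gal(M/k) \cong (\Z/2)^2$. The associated torus $T = R^1_{K/k}(\G_m)$ satisfies $\cyr{X}^2_\omega(\widehat T) \cong \Z/2$. The essential new feature, compared with Theorem \ref{rational-1}, is that $\cyr{X}^2_\omega(\widehat T)_P$ is in general a \emph{proper} subgroup of $\cyr{X}^2_\omega(\widehat T)$: there is an extra generator, arising from the three-norm relation, which need not lift to $\Br(X)$ for the CHS partial compactification $X$.

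My strategy is to reuse the mechanism of Theorem \ref{rational-1} and bolt on one additional Brauer class by hand. Starting from a smooth proper model $Y$ and an adelic family $(P_v)_v$ orthogonal to $\Br(Y)$, I apply the analogue of Lemma \ref{infinite} (which equally holds when $K$ is a product of fields) to reduce to an affine model $W$ defined by $N_{K/k}(\Xi) = P^{(1)}(t) \neq 0$ with smooth fibre at infinity; let $X$ denote its CHS partial compactification with projection $p \colon X \to \P^1$. I select a finite subset $B \subset \Br(X)$ lifting $\Br(X)/\Br(k)$, together with the symbols $A = \{(p_i(t), \chi) : p_i \mid P^{(1)},\ \chi \in \Hom(\Gal(M/k), \Q/\Z)\}$, and add one explicit representative $\alpha_0 = (a,\, y_1^2 - by_2^2) \in \Br(k(W))$ whose image on the generic fibre generates a complement of $\cyr{X}^2_\omega(\widehat T)_P$ inside $\cyr{X}^2_\omega(\widehat T)$. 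Restricting to an open subvariety $W' \subset W$ on which $\alpha_0$ becomes unramified, I apply Harari's formal lemma to $A \cup B \cup \{\alpha_0\}$ to extend $(P_v)_{v \in S}$ to a larger family $(P_v)_{v \in S_1}$ orthogonal to every element of this set.

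I then invoke Schinzel's Hypothesis $(H_1)$ to produce $\lambda \in k$ close to the $t$-coordinate of each $P_v$ at finite places of $S_1$ and large at archimedean places, so that each irreducible factor $p_i(t)$ of $P^{(1)}$ has $p_i(\lambda)$ a uniformizer at exactly one new place $v_i$ and a unit elsewhere outside $S_1$. Exactly as in the proof of Theorem \ref{rational-1}, the resulting adelic family $(P'_v)$ on the fibre $W_\lambda$ lies in $W_\lambda(k_v)$ for every $v$, $K/k$ is totally split at each $v_i$, and the sum of invariants of every $\mathcal B \in B$ on $(P'_v)$ vanishes. The new input is that, because $\alpha_0$ was included in the Harari step and because at each $v_i$ the local invariant of $\alpha_0(P'_{v_i})$ can be read off from a residue computation using that $p_i(\lambda)$ is a prime there, that $K/k$ splits completely at $v_i$, and that $\alpha_0$ is a Hilbert symbol attached to a quadratic subfield of $M$, the sum of invariants of $\alpha_0$ on $(P'_v)$ also vanishes. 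Hence $(P'_v)$ is orthogonal to a generating set of $\Br(W_\lambda)/\Br(k)$, and since $W_\lambda$ is a principal homogeneous space under $T$, Sansuc's theorem (Theorem 8.12 in \cite{San} or Theorem 5.2.1 in \cite{Sko01}) produces a $k$-rational point on $W_\lambda$ as close as desired to the prescribed $P_v$.

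The main obstacle is the explicit handling of $\alpha_0$: one must identify the correct symbol (or a suitable modification thereof) representing the extra generator of $\cyr{X}^2_\omega(\widehat T)$, locate an open subvariety of $W$ on which it is unramified so that Harari's formal lemma applies to it, and carry out the residue calculation at each $v_i$ to verify $inv_{v_i}(\alpha_0(P'_{v_i})) = 0$. This is precisely the one place where the argument departs from the more routine mechanics of Theorem \ref{rational-1}, in which the analogous check is automatic because every relevant Brauer class is of the vertical form $(p_i(t), \chi)$; everything else transcribes directly.
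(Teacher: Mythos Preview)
Your outline matches the paper's approach, but you have misplaced where the actual work lies and added unnecessary structure. The paper uses only the vertical classes $(p_i(t),b)$ together with the single horizontal class $\mathcal B=(x_1^2-ax_2^2,b)$, which already generates $\Br(Y_\lambda)/\Br(k)\cong\Z/2$ on every smooth fibre; no auxiliary set $B\subset\Br(X)$ lifting $\Br(X)/\Br(k)$ is needed, and your $\alpha_0=(a,y_1^2-by_2^2)$ plays exactly the role of the paper's $\mathcal B$.

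The step you flag as the main obstacle---evaluating $\alpha_0$ at the Schinzel places $v_i$---is in fact trivial once the vertical classes have forced $M=k(\sqrt a,\sqrt b)$ to split completely there: then $a\in k_{v_i}^{*2}$ and the symbol dies. What you do \emph{not} address, and what is the genuine content of the paper's argument, is the vanishing of $inv_v(\alpha_0(P'_v))$ at the remaining places $v\notin S_1$ with $v\ne v_i$. At such $v$ the extension $M/k$ is unramified and $P^{(1)}(\lambda)$ is a unit, so one of $a,b,ab$ is a square in $k_v$, and one must treat the three cases separately. If $a$ is a square the symbol is zero; if $ab$ is a square then $k_v(\sqrt a)=k_v(\sqrt b)$ and $y_1^2-by_2^2$ is a norm from $k_v(\sqrt a)$; but if only $b$ is a square one must invoke the defining equation to write
\[
(a,\,y_1^2-by_2^2)=(a,\,P^{(1)}(\lambda))-(a,\,x_1^2-ax_2^2)-(a,\,z_1^2-abz_2^2)
\]
and observe that the last two terms vanish as norms from $k_v(\sqrt a)=k_v(\sqrt{ab})$ while $(a,P^{(1)}(\lambda))=0$ because both entries are units. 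This case analysis is the only place where the proof differs in substance from that of Theorem~\ref{rational-1}, and it is absent from your sketch.
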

\begin{proof}  It is sufficient to prove the statement for any given model. Let $V$ be the smooth locus of the affine $k$-variety defined by
$$(x_1^2-ax_2^2)(y_1^2-by_2^2)(z_1^2-abz_2^2)=P(t).$$ Let $Y$ be a
smooth compactification of $V$ with a projection $p: Y\rightarrow
\P^1_k$. If one of the three numbers $a,b,ab$ is a square in $k^\times$,
this theorem is obvious since $Y$ is rational. Then we only need to consider the case all
numbers $a,b,ab$ are not squares in $k^\times$.

We assume that $Y$ has points in all completions of $k$, and we are
given a finite set $S$ of places of $k$ containing all archimedean places, and points $P_v\in Y(k_v)$
for $v\in S$. We assume that there is no Brauer-Manin obstruction to
weak approximation for $(P_v)_{v\in S}$.

By a similar argument as
in the proof of Theorem \ref{rational-1}, we can replace $P_v$ by a point close to it in an open subset of $Y$ for each finite place in $S$ and "move $P_v$ to the infinite point" for each archimedean place $v$ , then we get an open smooth
affine subvariety $U$ of $Y$ defined by
$$(x_1^2-ax_2^2)(y_1^2-by_2^2)(z_1^2-abz_2^2)=P^{(1)}(t)\neq 0,$$
such that $P_v$ is contained in $U$ for all $v\in S$ and $P_v$ is very close to the smooth fiber of $U$ at the infinite point for all archimedean places.

Let $U_0$ be the open subvariety of $\A^1$ defined by $P^{(1)}\neq 0$.
Now we only need to look for a point $Q\in U_0(k)$ with the associated
coordinate $\lambda\in k$, such that $\lambda$ is very close to each
$p(P_v)$ for any finite place $v\in S$, $\lambda$ is big enough at each archimedean place of $k$ ($Q$ is close enough to the infinite point), and such that the
fibre $U_\lambda$ has a $k$-rational point.

Let $P^{(1)}(t)=cp_1(t)^{e_1}\cdots p_m(t)^{e_m}$, where $p_i(t)$ is irreducible over $k$. Let
$$A=\{(p_i(t),b)\in \Br(U): 1\leq i \leq m\}\cup \{(x_1^2-ax_2^2,b)\}.$$ We
know $(x_1^2-ax_2^2,b)$ is the unique generator of the unramified Brauer group
of the smooth fibre of $Y$ (see \cite[Theorem 4.1]{CT11}). Obviously $A\subset \Br(U)$.

We enlarge
$S$ so that it contains all the original places at which we want to
approximate and that it also contains the places associated in
Hypothesis $H_1$ (\cite[p. 71]{CTSD94}) to the polynomials of
$P^{(1)}(t)$ and all ramified places of $K/k$, where
$K=k(\sqrt{a},\sqrt{b})$.

There is no Brauer-Manin obstruction to weak approximation for
$(P_v)_{v\in S}$. According to Harari's formal lemma (see
\cite{Ha94}), we may find a finite set $S_1$ of places of $k$
containing $S$, and points $P_v\in U(k_v)$, $v\in S_1$, and which
extend the given family
$$P_v \in U(k_v),v\in S,$$ such that for each $\mathcal B\in
A$
\begin{equation}\label{Ha:3.2}
\sum_{v\in S_1}inv_v(\mathcal B(P_v))=0.
\end{equation}

Applying Hypothesis $(H_1)$ (\cite[Proposition 4.1]{CTSD94}),
we thus find $\lambda\in k$ close enough to each $\lambda_v=p(P_v)$
for the finite places $v\in S_1$, $\lambda$ integral away from
$S_1$, and $\lambda$ as large as we wanted at the archimedean places, such
that:
\begin{enumerate}[(i)]
\item The fibre $U_\lambda$ of $p$ contains a $k_v$-point $P_v'$ which
is as close as we wish to $P_v$ for all places $v\in S_1$, and such
that
$$inv_v(\mathcal B(P_v'))=inv_v(\mathcal B(P_v))$$ for each $\mathcal B\in A$ and $v\in
S_1$.

\item For each irreducible term $p_i(t)$ of
$P^{(1)}(t)$, there exists a place $v_i$ such that $p_i(\lambda)$ is
a uniformizer at $v_i$ and  is a unit at $v$ if $v\not \in S_1$
and $v\neq v_i$.
\end{enumerate}

If $v\not \in S_1$, then $K/k$ is unramified at $v$. Then one of
$a,b,ab$ is a square in $k_v^\times$. Then the fibre $U_\lambda$
contains a $k_{v}$-point $P_v'$ for all places $v\not \in S_1$.

%For each $\mathcal B\in A$, we have
%$$0=\sum_{v\in S_1}inv_v(\mathcal B(P_v))=\sum_{v\in S_1}inv_v(\mathcal B(P_v')).$$
%In particular,

Let $\mathcal B=(x_1^2-ax_2^2,b)$. In the following we will show $inv_v(\mathcal
B(P_v'))=0$ for any $v\notin S_1$:
\begin{enumerate}[a)]
\item Suppose $v\neq v_i$ for $1\leq i\leq m$.
\begin{enumerate}[1)]
\item If $k(\sqrt{b})/k$ is split at $v$, then $inv_v(\mathcal
B(P_v'))=0$.

\item If $k_v(\sqrt{b})=k_v(\sqrt{a})$ is inert over $k_v$,
then $inv_v(\mathcal B(P_v'))=0$ obviously.

\item If $k_v(\sqrt{b})/k_v$ is inert
and $k_v(\sqrt{a})/k_v$ is split, then
$$(y_1^2-by_2^2,b)_v=(z_1^2-abz_2^2,b)_v=0.$$ One has
$$inv_v(\mathcal
B(P_v'))=(x_1^2-ax_2^2,b)_v=(P^{(1)}(\lambda),b)_v=0$$ since
$P^{(1)}(\lambda)$ is a unit at $v$.
\end{enumerate}

\item Suppose $v$ is some $v_i$. For each $(p_i(t),b)\in A$, by (\ref{Ha:3.2}) and point (i), we have
$$\sum_{v\in S_1}inv_v(p_i(\lambda),b)=0.$$
By global class field theory, we have
\begin{equation} \label{equ:S-2}
\sum_{v\not\in S_1}inv_v(p_i(\lambda),b)=0.
\end{equation}
By point (ii) above, we have $inv_{v}(p_i(\lambda),b)=0$ for
$v\not \in S_1$ and $v\neq v_i$, since $p_i(\lambda)$ is a unit at
$v$. Then (\ref{equ:S-2}) implies $inv_{v_i}(p_i(\lambda),b)=0.$ Since $p_i(\lambda)$ is a
uniformizer at $v_i$, we have $k(\sqrt{b})/k$ is totally split at
$v_i$, hence $inv_{v_i}(\mathcal B(P_{v_i}'))=0$.
\end{enumerate}

By above arguments, we have
$$\sum_{v\in \Omega_k}inv_v(\mathcal B(P_v'))=\sum_{v\in S_1}inv_v(\mathcal
B(P_v'))=0$$
by (\ref{Ha:3.2}) and point (i) for the last equality. Note that $\mathcal B$ generates the unramified Brauer group $\Br_{ur}(U_\lambda)/\Br_0(U_\lambda)$, there is no Brauer-Manin obstruction on
$U_\lambda$ for $(P_v')_{v\in \Omega_k}$. There is a $k$-point on $U_\lambda$ which
is close enough to $(P_v')_{v\in S}$ by \cite[Theorem 8.12]{San}.
\end{proof}

In the above we considered the case that the extension $K/k$ is
abelian. In the following we will consider the case that $K/k$ is
non-abelian. The original idea owes to Colliot-Th\'el\`ene and the
further development owes to Wittenberg.

\begin{thm}\label{rational-3}
Let $k$ be a number field and  $K/k$ non-Galois extension of prime degree $p$. Let $K^{cl}$ be the Galois closure of $K/k$ and $F/k$ the maximal abelian subextension of $K^{cl}/k$. Let $P(t)$ be a nonzero polynomial over $k$. For any irreducible factor $p(t)$ of $P(t)$, let $L=k[t]/(p(t))$. We suppose
\begin{equation}\label{assumption:P}
F\not \subset L,\text{ or } F\subset L\text{ and } K.L/L \text{ is abelian};
\end{equation}
e.g., if $p=3$, any nonzero polynomial $P(t)$ satisfies (\ref{assumption:P}).
%$K/k$ be non-cyclic and of degree 3.
Assume Schinzel's hypothesis holds. Then the Brauer-Manin obstruction to the Hasse principle and
weak approximation for rational points is the only obstruction for any smooth proper
model of the variety over $k$ defined by the equation
$$N_{K/k}(\Xi)=P(t).$$
\end{thm}

Before the proof of Theorem \ref{rational-3}, we need a lemma, which is similar to \cite[Lemma 4.8]{HSW}.
\begin{lem}\label{lemma:local} If $v$ is a place of $k$ which is unramified in $K^{cl}/k$ and not totally split in $F/k$. Then the equation $N_{K/k}(\Xi)=a$ is solvable over $k_v$ for any $a\in k_v^\times$.
\end{lem}
\begin{proof} Write $K\otimes_k k_v:=K_{w_1}\oplus\cdots\oplus K_{w_s}$. First we will show $s>1$.

If $s=1$, then $K\otimes_k k_v=K_w$ is a field extension of $k_v$ of degree $p$. By the assumption, $v$ is not totally split in $F/k$, therefore $F\otimes_k k_v= F_{u_1}\oplus\cdots \oplus F_{u_t}$ with $F_{u_1}\neq k_v$. Let $d:=[F_{u_1}:k_v]>1$. Since $\Gal(K^{cl}/k)$ is a subgroup of the symmetric group  ${\mathcal S}_p$, we have $d<p$, hence $K_w$ and $F_{u_1}$ are linearly disjoint over $k_v$. Therefore the
Frobenius at $v$ in $\Gal(K^{cl}/k)$ is an element of order divisible by $dp$. However,
$\mathcal S_p$ contains no such elements, so the case s = 1 is impossible.

Let $d_i:=[K_{w_i}:k_v]$. Since $p = d_1 + ... + d_s$ is a prime number and $s>1$, then $\text{gcd}(d_1,\dots,d_s)=1$. Therefore there exist integers
$n_1, \dots, n_s$ such that $1 = n_1 d_1 + \dots + n_s d_s$. It follows that
$$a=\prod_{i=1}^s N_{K_{w_i}/k_v}(a^{n_i})\in N_{K/k}(K\otimes_k k_v),$$
so we are done.
\end{proof}

\begin{proof}[The proof of Theorem \ref{rational-3}]
Let $V$ be the smooth locus of the affine $k$-variety defined by
$N_{K/k}(\Xi)=P(t)$.
Let $Y$ be a smooth compactification of $V$
with a projection $p: Y\rightarrow \P^1_k$ defined by
$(\Xi,t)\mapsto t$.

%In the following we will normalize $P(t)$. Let $e>0$ be an integer such that $p(t)^e\mid P(t)$ and $p(t)^{e+1}\nmid P(t)$, where $p(t)$ is monic and irreducible over $k$. Let $L=k[t]/(p(t))$. Assume there is an embedding $K\hookrightarrow L$. Let $\xi$ be a root of $p(t)$ in $L$ and let $\hat{Y}$ be a smooth proper $k$-variety defined by $$N_{K/k}(\Xi)=P(t)/p(t)^{e}.$$
%
%(i) Suppose $P(t)/p(t)^{e}=c\in k^\times$. Then there is a birational isomorphism $$Y\rightarrow \hat{Y}\times \P^1_k,\ \  (\Xi,t)\mapsto (\Xi\cdot N_{L/K}(t-\xi)^e,t).$$
%Then $\hat{Y}$ is a principal homogeneous space of a torus. Therefore the Brauer-Manin obstruction to the Hasse principle and
%weak approximation is the only obstruction for $Y$ by \cite[Theorem 8.12]{San}.
%
%(ii) Suppose $P(t)/p(t)^{e}\not \in k^\times$. Then there is a birational isomorphism $$Y\rightarrow \hat{Y},\ \  (\Xi,t)\mapsto (\Xi \cdot N_{L/K}(t-\xi)^{-e},t).$$
%Applying the above argument inductively, we may assume that there does not exist an embedding $K\hookrightarrow L$, where $L=k[t]/(p(t))$ and $p(t)$ is any irreducible factor of $P(t)$.

We assume that $Y$ has points in all completions of $k$, and we are
given a finite set $S$ of places of $k$ containing all archimedean places, and points $P_v\in Y(k_v)$
for $v\in S$. We assume that there is no Brauer-Manin obstruction to
weak approximation for $(P_v)_{v\in S}$.

By a similar argument as
in the proof of Theorem \ref{rational-1}, we can replace $P_v$ by a point close to it in an open subset of $Y$ for each finite place in $S$ and "move $P_v$ to the infinite point" for each archimedean place $v$ , $i.e.$, we get an open smooth
affine subvariety $U$ of $Y$ defined by
$$N_{K/k}(\Xi)=P^{(1)}(t) \neq 0,$$
such that $P_v$ is contained in $U$ for all $v\in S$ and $P_v$ is very close to the smooth fiber of $U$ at the infinite point for all archimedean places.

Let $U_0$ be the open subvariety of $\A^1$ defined by $P^{(1)}(t)\neq 0$.
We are now looking for a point $Q\in U_0(k)$ with associated
coordinate $\lambda\in k$, such that $\lambda$ is very close to each
$Q_v$ for $v\in S$, $v$ finite, $\lambda$ is big enough at each archimedean place ($Q$ is close enough to the infinite point), and such that the
fibre $U_\lambda$ has a $k$-rational point.

Let $P^{(1)}(t)=cp_1(t)^{e_1}\cdots p_m(t)^{e_m}$, where $p_i(t)$ is monic and irreducible over $k$. Let $L_i=k[t]/(p_i(t))$. By our assumption (\ref{assumption:P}) for $P(t)$, it is easy to verify that
\begin{equation}\label{assumption:P1}
F\not \subset L_i,\text{ or } F\subset L_i\text{ and } K.L_i/L_i \text{ is abelian}.
\end{equation}
Define
$$T_1=\{1\leq i\leq m:
F\not \subset L_i\}\text{ and }T_2=\{1\leq i\leq m:
F\subset L_i\}.$$
For $i\in T_1$, we choose a nontrivial character $\chi_i$ of $\Gal(F/k)$, which is nontrivial on the subgroup $\Gal(F/F\cap L_i)$.
%For $i\in T_2$,
%$K.L_i/L_i$ is cyclic of order $p$ or $1$ by (\ref{assumption:P1}). Because of our assumption $K\not \hookrightarrow L_i$, $K.L_i/L_i$ is cyclic of order $p$.
%We choose a primitive character $\psi_i$ of $\Gal(K.L_i/L_i)$
Denote $$\begin{aligned}A_1&=\{(p_i(t),\chi_i)\in \Br(U): i\in T_1\},\\
A_2&=\bigcup_{i\in T_2}\{\Cor_{L_i/k}(t-\eta_i,\psi)\in \Br(U): \psi\in \Hom(\Gal(K.L_i/L_i),\Q/\Z)\},\end{aligned}$$
where $\eta_i$ is a root of $p_i(t)$ in $L_i$.

Let $S'$ be a finite set of places of $k$ containing all the archimedean places and all the bad
finite places in sight: finite places where one $p_i(t)$ is not
integral, finite places where all $p_i(t)$ are integral but the
product $\prod_{i}p_i(t) $ does not remain separable when reduced
modulo $v$, places ramified in $K^{cl}/k$.

We enlarge $S$ such that $S\supset S'$ and that $S$ contains all the original places at which
we want to approximate, and that $S$ also contains the places
associated in Hypothesis $H_1$ (\cite[p. 71]{CTSD94}) to the
polynomials of $P^{(1)}(t)$ and all ramified places of $K^{cl}/k$.
There is no Brauer-Manin obstruction to weak approximation for
$(P_v)_{v\in S}$. According to Harari's formal lemma (see
\cite{Ha94}), we may find a finite set $S_1$ of places of $k$
containing $S$, and points $P_v\in U(k_v)$, $v\in S_1$, which extend
the given family
$$P_v \in U(k_v),v\in S,$$ such that for each $\mathcal B\in
A_1\cup A_2$
\begin{equation*}
\label{Ha:3.3}\sum_{v\in S_1}inv_v(\mathcal B(P_v))=0.
\end{equation*}

We claim that for any $i \in T_1$, we can find a place $v_i\notin S_1$ of $k$ and a point $P_{v_i}$ in $U(k_{v_i})$ such that
$$inv_{v_i}(p_i(\lambda_{v_i}),\chi_i)\neq 0 \text{ and } inv_{v_i}(\mathcal B(P_{v_i}))=0
$$ for any $\mathcal B\in A_1\cup A_2$ and $\mathcal B\neq
(p_i(t),\chi_i)$, where $\lambda_{v_i}$ is the coordinate of the image of $P_{v_i}$ in $\A^1$. Therefore we extend $S_1$ to $S_1\cup \{v_i: i\in T_1\}$,
then we have
\begin{equation}\label{a1}
\sum_{v\in S_1}inv_v(\mathcal B(P_v))\neq 0 \text{ for } \mathcal B\in
A_1
\end{equation}
and
\begin{equation}\label{a2}\sum_{v\in S_1}inv_v(\mathcal B(P_v))=0 \text{ for }
\mathcal B\in A_2.\end{equation}

We now prove this claim. For $i\in T_1$, let $E_i=F.L_i^{cl}$, where $L_i^{cl}$ is the Galois closure of $L_i/k$. Let $\Res_{k/L_i}(\chi_i)$ be the restriction of $\chi_i$ to $\Gal(F.L_i/L_i)$, which is a nontrivial character by the choice of $\chi_i$. Obviously there exists $g\in \Gal(E_i/L_i)$ with $\Res_{k/L_i}(\chi_i)(\bar g)\neq 0$, where $\bar g$ is the image of $g$ in the quotient group $\Gal(F.L_i/L_i)$.
%Otherwise $M_i=M_i.E$, hence $E\subset M_i$. Since $[E:k]$ and $[K:k]=p$ are relatively prime, $[L_i.E:L_i]$ and $[L_i.K:L_i]$ are relatively prime
% hence $L_i.K^{cl}=F_i(=L_i.K)$, which implies $L_i\cap K^{cl}\neq k$. Since $k(\sqrt{d})$ is the unique quadratic subfield of $K^{cl}$ and $k(\sqrt{d})\not \subset L_i$, we have $[L_i\cap K^{cl}: k]=3$. All $2$-Sylow subgroups of $\Gal(K^{cl}/k)(\cong\mathcal S_3)$ are conjugate, there is an embedding $K\hookrightarrow L_i$ by Galois theory, it is impossible because of our assumption $K\not \hookrightarrow L_i$.
There are infinitely many places $v$ of $k$ such that $g$ is contained in the
conjugation class of the Frobenius of $v$ by Chebotarev's density
theorem. We can choose such a place $v_i$ with $v_i\not \in S$.
Therefore $p_i(t)=\pi_{v_i}$ has a solution $\lambda_{v_i}\in \frak
o_{k_{v_i}}$ and $p_j(\lambda_{v_i}) \in \frak o_{k_{v_i}}^\times$
for $j\neq i$, where $\pi_{v_i}$ is a uniformizer of $k_{v_i}$. Since $v_i$ is not totally split in $E/k$, by Lemma \ref{lemma:local}, $U_{\lambda_{{v_i}}}$ has a $k_{v_i}$-point $P_{v_i}$. Therefore we have
$$inv_{v_i}(p_i(\lambda_{v_i}),\chi_i)\neq 0 \text{ and } inv_{v_i}(\mathcal B(P_{v_i}))=0
$$ for any $\mathcal B\in A_1$ and $\mathcal B\neq
(p_i(t),\chi_i)$. Suppose $\mathcal B\in A_2$, since $p_j(\lambda_{v_i})$ is a unit, we also have $inv_{v_i}(\mathcal B(P_{v_i}))=0$.

Now apply Hypothesis $(H_1)$ (\cite[Proposition
4.1]{CTSD94}), we thus find $\lambda\in k$ close enough to each
$\lambda_v=p(P_v)$ for the finite places $v\in S_1$, $\lambda$
integral away from $S_1$, and $\lambda$ as large as we wanted at all
archimedean places, such that:

(i) The fibre $U_\lambda$ of $p$ contains a $k_v$-point $P_v'$ which
is as close as we wish to $P_v$ for all places $v\in S_1$, and such
that
$$inv_v(\mathcal B(P_v'))=inv_v(\mathcal B(P_v))$$ for each $\mathcal B\in A_1\cup A_2$ and $v\in
S_1$.

(ii) For each irreducible term $p_i(t)$ of the polynomial
$P^{(1)}(t)$, there exists a place $v_i'$ such that $p_i(\lambda)$
is a uniformizer at $v_i'$ and  is a unit at $v$ if $v\not \in
S_1$ and $v\neq v_i'$.

In the following we will show that $U_\lambda$ also contains a $k_v$-point for $v \notin S_1$.
Since $\Br(U_\lambda)=\Br_0(U_\lambda)$ (\cite[Proposition 9.1]{CT/Sa87}), there is a
$k$-point on $U_\lambda$ which is close enough to $(P_v')_{v\in S}$
by \cite[Theorem 8.12]{San}.

Suppose $v\notin S_1$ and $v\neq v_i'$ for $1\leq i\leq m$. Since $P^{(1)}(\lambda)$ is a unit in $k_v$, $U_\lambda$ contains a $k_v$-point.

Let $i\in T_1$.  For each $\mathcal B\in A_1$, by (\ref{a1}) and point (i), we have
$$0\neq \sum_{v\in S_1}inv_v(\mathcal B(P_v))=\sum_{v\in S_1}inv_v(\mathcal B(P_v')).$$
Then we have
$$\sum_{v\in S_1}inv_v(p_i(\lambda),\chi_i)\neq 0.$$
By global class field theory, we have
\begin{equation}\label{equ:S-3}
\sum_{v\not\in S_1}inv_v(p_i(\lambda),\chi_i)\neq 0.
\end{equation}
By point (ii) above, we have $inv_{v}(p_i(\lambda),\chi_i)=0$ for
$v\not \in S_1$ and $v\neq v_i'$, since $p_i(\lambda)$ is a unit at
$v$. Then (\ref{equ:S-3}) implies
$$inv_{v_i'}(p_i(\lambda),\chi_i)\neq 0.$$ Since $p_i(\lambda)$ is a
uniformizer at $v_i'$, we have $F/k$ is not totally split at $v_i'$. By Lemma \ref{lemma:local}, $U_\lambda$ has a
$k_{v_i'}$-point for $i\in T_1$.

Let $i\in T_2$. For any $\mathcal B=\Cor_{L_i/k}(t-\eta_i,\psi)\in A_2$,  by equation (\ref{a2}) and class field theory, similarly we have
$$0= \sum_{v\not \in S_1}inv_v(\Cor_{L_i/k}(\lambda-\eta_i,\psi))=inv_{v_i'}(\Cor_{L_i/k}(\lambda-\eta_i,\psi)),$$
which is equivalent to:
\begin{equation}\label{equ:inv}
\sum_{w\in \Omega_{L_i}, w\mid v_i'}inv_w(\lambda-\eta_i,\psi)=0.
\end{equation}
Since
$ord_{v_i'}(p_i(\lambda))=1$, in the decomposition of $L_i\otimes_k k_{v_i'}$ into a product of local fields $k_{i,w}$, $\lambda-\eta_i$ goes to a unit into all local fields $k_{i,w}$ but one, call it $k_{i,w_i}$, which is of degree one over $k_{v_i'}$ in which $\lambda-\eta_i$ becomes to a uniformizer. %since the norm of $\lambda-\eta_i$ is $v_i$-valuation 1
In (\ref{equ:inv}), all terms but one then vanish. Thus the remain one $inv_{w_i}(\lambda-\eta_i,\psi)$ also vanishes. Since $\lambda-\eta_i$ is a uniformizer in $L_{i,w_i}$ and $\psi$ runs through all characters of $\Gal(K.L_i/L_i))$, $K.L_i/L_i$ is totally split at $w_i$. Therefore $K$ can embed into $L_{i,w_i}=k_{v_i'}$, hence $U_\lambda$ has a $k_{v_i'}$-point
for $i\in T_2$.
\end{proof}

\begin{rem*}If $k=\Q$ and $P(t)$ is  a product of (possibly repeated) linear factors over
$\Q$, without assuming Schinzel's hypothesis, similar results to \ref{rational-2} and \ref{rational-3} have been proved recently (see \cite{BM,HSW}).
\end{rem*}

\section{Brauer-Manin properties for zero-cycles of degree 1}\label{zerocyclesection}

This section is devoted to the proof, for zero-cycles of degree 1,
of  unconditional versions of the theorems in Section
\ref{Schinzelsection}, using Salberger's device, as in
\cite{CTSSD98}. The same general comments as made in the beginning
of the previous section  may be made here, some related works were also performed in \cite{Liang1,Liang2,Wit}. In particular most of the
results in the present section are not covered by \cite[Theorem 4.1]{CTSSD98}.

\begin{thm}\label{cycle-1}
Let $k$ be a number field and $K/k$ an abelian extension. Let $P(t)$
be a nonzero polynomial over $k$.  Let $V$ be the smooth locus of the affine
$k$-variety defined by
$$N_{K/k}(\Xi)=P(t).$$
Let $T=R^1_{K/k}(\G_m)$. Suppose $\cyr{X}_\omega^2(\widehat
T)_P=\cyr{X}_\omega^2(\widehat T)$ (see
Section 1 for definition). If there is no Brauer-Manin
obstruction to the existence of a zero-cycle of degree 1 on a smooth
proper model $V^c$ of $V$, then there is a zero-cycle of degree 1 on
$V^c$ (defined over $k$).
\end{thm}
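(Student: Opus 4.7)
The plan is to mimic the proof of Theorem \ref{rational-1}, trading Schinzel's hypothesis for Salberger's device, as carried out in \cite{CTSSD98}. Let $p : V^c \to \P^1_k$ be the projection extending $(\Xi,t)\mapsto t$, and let $X \subset V^c$ denote the CHS partial compactification. Start from a family $(z_v)_{v\in \Omega_k}$ of local zero-cycles of degree $1$ on $V^c_{k_v}$ orthogonal to $\Br(V^c)$. Using moving lemmas, for any prescribed large integer $N$ I would replace each $z_v$ by an effective zero-cycle of degree $N$ supported on closed points whose $p$-images avoid the bad values of $t$, and which approximate a prescribed adelic datum on a fixed finite set $S$ of places. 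After a coordinate change on $\P^1_k$ as in Lemma \ref{infinite} placing a smooth fibre at infinity, I may further assume the resulting polynomial $P^{(1)}(t)$ still satisfies $\cyr{X}^2_\omega(\widehat T)_{P^{(1)}} = \cyr{X}^2_\omega(\widehat T)$.

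The Salberger step then replaces the choice of a $k$-rational $\lambda$ satisfying Hypothesis $(H_1)$: push the $z_v$ down to effective divisors $D_v$ of degree $N$ on $\A^1_{k_v}$, and produce a single closed point $\theta \in \A^1_k$, i.e.\ a monic irreducible polynomial $\theta(t) \in k[t]$ of degree $N$, that $v$-adically approximates each $D_v$ for $v\in S$ and whose Frobenius at certain prescribed auxiliary places $v_i$ (one per irreducible factor $p_i(t)$ of $P^{(1)}(t)$) lies in a prescribed conjugacy class of $\Gal(E/k)$, where $E/k$ is the minimal Galois extension containing $K$ and splitting $P^{(1)}$. The existence of such $\theta$ is an unconditional Chebotarev-plus-approximation statement, and this is precisely what substitutes for Schinzel's hypothesis in the proof of Theorem \ref{rational-1}.

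Set $k_\theta = k[t]/(\theta(t))$ and let $\lambda_\theta \in \A^1(k_\theta)$ be the tautological point. The fibre $V^c_{\lambda_\theta}$ is birationally a principal homogeneous space under the torus $T_{k_\theta}$, so by Theorem 5.2.1 of \cite{Sko01} it has a $k_\theta$-rational point as soon as it is everywhere locally soluble and orthogonal to its own Brauer group. Local solubility at places of $k_\theta$ above $S$ comes from the approximation of $D_v$ by $\theta$; at the auxiliary places $v_i$ it follows from the exact same local invariant computation performed in the proof of Theorem \ref{rational-1}, applied to the Brauer elements $(p_i(t),\chi)$ coming from Theorem \ref{CHS}, combined with global class field theory over $k$. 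The hypothesis $\cyr{X}^2_\omega(\widehat T)_P = \cyr{X}^2_\omega(\widehat T)$ together with Proposition \ref{abel} ensures that every class of $\Br(X)/\Br_0(X)$ is controlled by cyclic algebras of this shape, so that Harari's formal lemma applies verbatim. A $k_\theta$-point of $V^c_{\lambda_\theta}$ yields a closed point of $V^c$ of degree $N$, hence a zero-cycle of degree $N$; running the construction with two coprime values of $N$ and combining linearly produces a zero-cycle of degree $1$.

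I expect the principal obstacle to be the Salberger step itself: simultaneously approximating the local divisors $D_v$ by a \emph{single} irreducible polynomial $\theta$ of prescribed degree while controlling the Frobenii of $\theta$ at auxiliary places is the delicate ingredient that \cite{CTSSD98} packages as a refinement of Chebotarev's theorem for zero-cycles on $\P^1$. Once $\theta$ has been produced, the Brauer group manipulations are parallel to those in \S \ref{Schinzelsection}, with global class field theory applied to the invariants along the closed point $\theta$ replacing the role of the individual rational $\lambda$.
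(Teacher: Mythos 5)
Your overall strategy matches the paper's: use Salberger's device (Theorem 3.1 of \cite{CTSD94}) in place of Schinzel's hypothesis, produce a closed point $\theta$ of $\A^1_k$ approximating the pushed-down local cycles and with controlled splitting at auxiliary places, then apply the Brauer--Manin-is-the-only-obstruction result for torsors under tori over $F = k(\theta)$. But there is a real gap in the step where you "replace each $z_v$ by an effective zero-cycle of degree $N$" for "any prescribed large integer $N$", and in how you then get degree $1$.

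You cannot prescribe $N$ freely. The paper fixes a closed point $N_0$ of $V$ with $k(N_0)=K$, sets $d=[K:k]$ and $s$ the least common multiple of the orders of the Brauer classes being controlled, writes $z_v = z_v^+ - z_v^-$, replaces $z_v$ by the effective cycle $z_v^+ + (ds-1)z_v^-$, and then adds multiples of $sN_0$ to equalize degrees. This forces the common degree to have the form $1+Dds$. That congruence is not incidental: it guarantees $\gcd([F:k],[K:k])=1$, hence $F\cap K = k$, which is exactly what the paper needs to conclude that the chosen finite set $B\subset\Br(X)$ still generates $\Br(U^c_\theta)/\Br_0(U^c_\theta)$ after base change to $F$. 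Without that coprimality you have no control over the Brauer group of the fibre over $F$, and orthogonality of the $F_w$-points to the algebras in $B$ does not give you orthogonality to the whole Brauer group, so Sansuc's theorem does not apply. Your proposal nowhere addresses this.

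Your final step --- running the construction with two coprime values of $N$ and combining --- is not how the paper finishes and is not supported by what you wrote, since you have not shown that two coprime degrees are achievable (the achievable degrees all lie in $1+ds\Z$, and one would have to argue separately that two of them can be chosen coprime). The paper's device is simpler: a single run produces a closed point of degree $1+Dds$, which is automatically coprime to $d$; together with $N_0$ of degree $d$, Bezout gives a zero-cycle of degree $1$. You should introduce the auxiliary point $N_0$, keep track of the degree modulo $ds$, and use $N_0$ both to equalize local degrees and to finish.
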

\begin{proof}
Let $U$ be the smooth affine variety over $k$ defined by
$$N_{K/k}(\Xi)=P(t)\neq 0.$$
Let $U^c$ be a smooth compactification of $U$ with a projection $p:
U^c \rightarrow \P^1_k$.

Let $X$ be the \emph{CTHS} partial compactification (see Section 1 for definition) of $V$. By the assumption, we know $\cyr{X}^2_\omega (\widehat T
)_P=\cyr{X}^2_\omega (\widehat T )$. Then we can choose a finite
subset $B\subset \Br(X)$, such that the image of $B$ by the
composite map $$\Br(X) \rightarrow H^1(k,\Pic(\overline X))\rightarrow
\cyr{X}^2_\omega (\widehat T)$$ is $\cyr{X}^2_\omega
(\widehat T)$ by the sequence (\ref{bs}).

Let $P(t)=cp_1(t)^{e_1}\cdots p_m(t)^{e_m}$, where $p_i(t)$ is a monic irreducible polynomial over $k$.
Choose $\omega_1,\cdots,\omega_n\in \frak o_K$ to be a basis of $K$ over $k$, where $n=[K:k]$.
Choose $S$ to be a finite set of places of $k$ containing all archimedean places such that $P(t) \in \frak o_S[t]$ and $\omega_1,\cdots,\omega_n$ generate $\frak o_K\otimes_{\frak o_k} {\frak o_{k_v}}$ over $\frak o_{k_v}$ for any $v \notin S$. Let $\mathcal U \subset \A_{\frak o_S}^{n+2}$ be the smooth affine integral model of $U$ over $\frak o_S$ defined by
$$N_{K/k}(x_1 \omega_1+\cdots+x_n\omega_n)=P(t) \text{ and } y\cdot P(t)=1.$$
We enlarge $S$ such that for any $\mathcal B\in B$, $\mathcal B$ can be extended to $\Br(\mathcal U)$.

Let $S_0$ be a finite
set of places of $k$ containing $S$ and all
the bad finite places in sight: finite places where one $p_i(t)$ is
not integral, finite places where all $p_i(t)$ are integral but the
product $\prod_{i}p_i(t) $ does not remain separable when reduced
modulo $v$, places ramified in the extension $K/k$ and and all
places $v$ such that for some $\mathcal B \in B$, $inv_w(j_K(\mathcal B))\neq 0$ where
$w$ is a place of $K$ over $v$ and $j_K: \Br(X) \to \Br(X_K)$, noting that $\Br(X_K)=\Br_0(X_K)$ by Lemma \ref{Brauer-trivial}.

Let $$A=\{(p_i(t),\chi)\in \Br(U): 1\leq i\leq m, \chi \in \Hom(
\Gal(K/k),\Q/\Z)\}.$$ Since we assume that there is no Brauer-Manin
obstruction to the existence of a zero-cycle of degree 1, by an
obvious variant of Harari's result(\cite[Theorem 3.2.2]{CTSD94}) we
may find a finite set $S_1$ of places of $k$ containing $S_0$ and
for each $v\in S_1$ a zero-cycle $z_v$ of degree 1 with support in
$U\times_{k}k_{v}$ such that
\begin{equation*}
     \sum_{v\in S_1}inv_{v}(\left<\mathcal B,z_{v}\right>)=0 \text{ for all } \mathcal B\in A\cup
     B.
\end{equation*}

Let $s$ be the least common multiple of the orders of $\mathcal B\in
A\cup B$. Let us write the zero-cycle $z_v$ as
$z_{v}^{+}-z_{v}^{-}$, with $z_{v}^{+}$ and $z_{v}^{-}$ effective
cycles. Let $z_{v}^{1}$ be the effective cycle
$z_{v}^{+}+(ns-1)z_{v}^{-}$. We have $z_{v}=z_{v}^{1}-nsz_{v}^{-}$,
hence $\left<\mathcal B,z_v\right>=\left<\mathcal
A,z_{v}^{1}\right>$ since each $\mathcal B$ is killed by $s$. We
thus have
\begin{equation*}
     \sum_{v\in S_1}inv_{v}(\left<\mathcal B,z_{v}^1\right>)=0 \text{ for all } \mathcal B\in A\cup
     B.
\end{equation*}

Let $N_0$ be a closed point of $U$ such that
$k(N_0)=K$. Similarly $\left<\mathcal B,sN_0\right>=0$. The degree of
$z_{v}^{1}$ is congruent to $1$ modulo $ns$. The cycle $sN_0$ has
degree $ns$. Adding suitable multiples of $sN_0$ to each $z_{v}^{1}$
for $v$ in the finite set $S_1$, then we find effective cycles
$z_{v}^{2}$, all of the same degree $1+Dns$ for some $D>0$, and such
that
\begin{equation}\label{equ:3}
     \sum_{v\in S_1}inv_{v}(\left<\mathcal B,z_{v}^2\right>)=0 \text{ for all } \mathcal B\in A\cup
     B.
\end{equation}
By the implicit function
theorem and the continuity of the evaluation maps of the Brauer group, in (\ref{equ:3}), for each $v \in S_1$, each effective
cycle $z_v^2$ may be assumed to be a sum of distinct closed points
(i.e. there are no multiplicities) whose images under $p_{k_v}:
U_{k_v} \rightarrow \A_{k_v}^{1}$ are also distinct.

We claim that while keeping (\ref{equ:3}) we can moreover assume
that, for each $z_v^{2}$ and each closed point $P$ in the support of
$z_v^{2}$, the field extension map $k_v(f(P)) \subset k_v(P)$ is an
isomorphism. Once more, it is enough to replace $P$ by a suitable
and close enough $k_v(P)$-rational point on $U_{k_v}(P)$: this
follows from \cite[Lemma 6.2.1]{CTSD94}.

Each of the zero-cycles $p(z_v^2)$ is now given by a separable monic
polynomial $G_v[t] \in k_v[t]$ of degree $1+Dns$, prime to $P(t)$
and with the property that the smooth fibres of $p$ above the roots
of $G_v$ have  rational points over their field of definition.
By Krasner's lemma, any monic polynomial $H(t)$ close
enough to $G_v(t)$ for the $v$-adic topology on the coefficients
will be separable, with roots `close' to those of $G_v$. Thus the
fibres above the roots of the new polynomial are still smooth and
still possess rational points  over their field of definition.

An irreducible polynomial $G(t)$ of degree $1+Dns$ defines a closed
point $M$ of degree $1+Dns$ on $\A_k^{1}$. Let $F:=k(M)=k[t]/(G(t))$.
Let $\theta$ be the residue class of $t$ in $F$. We now choose the
irreducible polynomial $G(t)$ as given by
\cite[Theorem 3.1]{CTSSD98} with the field $L$ in Theorem 3.1 contains $K$, and $V'$
($V$ in Theorem 3.1) is the set of places of $k$ at
which $L$ is totally split, such that

(i) $\theta$ is integral at $v\notin S\cup V'$.

(ii) For each place $v\in S_1$, $G(t)$ is close enough to $G_v(t)$, such
that the fibre $U_\theta$ contains an $F_w$-point $P_w$ for
each place $w$ of $F$ over $v$, and such that $\sum_{w\mid v}P_w$ is
`close' enough to $z_v^2$ satisfying
$$\sum_{w\mid v}inv_v(\Cor_{F_w/k_v}(\mathcal B(P_w)))=inv_v(\left<\mathcal B,z_{v}^2\right>)$$ for each $\mathcal B\in A\cup B$.

(iii) For each irreducible term $p_i(t)$ of the polynomial $P(t)$,
there exists a place $w_i$ of $F$ such that $p_i(\theta)$ is a
uniformizer at $w_i$ and  is a unit at $w$ if $w \text { is not
over } S_1\cup V'$ and $w\neq w_i$.

We claim that the fibre $U_\theta/F$ also has a point for $w$ not over $S_1$.

\begin{enumerate}[1)]
\item If the  place $w$ is not over $S_1\cup V'$ and $w\neq w_i$
for all $i$, it is clear that $\mathcal U_\theta$ has a $\frak o_{F_w}$-point $P_w$
since  $\theta$ is integral at $w$ by point (i) and $P(\theta)$  is a unit at $w$ by point (iii).

\item If the  place $w$ is over $V'$,  it is clear that $U_\theta$ has a $F_w$-point $P_w$ since $K.F/F$ is totally
split at $w$.

\item  For each $\mathcal B=(p_i(t),\chi)\in A$, by (\ref{equ:3}) and point (ii), we have
$$\sum_{w\mid v,v\in S_1}inv_v(\Cor_{F_w/k_v}(p_i(\theta),\chi))=0.$$
By global reciprocity law, one has
\begin{equation}\label{equ:cycle-1}
\sum_{w\mid v,v\not\in S_1}inv_v(\Cor_{F_w/k_v}(p_i(\theta),\chi))=0.
\end{equation}
However, $inv_{w}(p_i(\theta),\chi)=0$ if $w$ is not over $S_1$ and
$w\neq w_i$, since either $p_i(\theta)$ is a unit in $F_w$ or $K.F/F$ is totally
split by point (iii)
above. Hence (\ref{equ:cycle-1}) implies
$$inv_{w_i}(p_i(\theta),\chi)=0.$$
Since $\chi$ runs through all characters of
$\Gal(K/k)$ and $p_i(\theta)$ is a
uniformizer at $w_i$ by point (iii), we have $K.F/F$ is totally split at $w_i$. Therefore
the fibre $U_\theta$ contains an $F_{w_i}$-point $P_{w_i}$.
\end{enumerate}

We claim $inv_w(\mathcal
B(P_w))=0$ for any $w$ is not over $S_1$.
\begin{enumerate}[a)]
\item Suppose $w$ is a place of $F$ over $v\not \in
S_1\cup V'$ and $w\neq w_i$ for $1\leq i\leq m$. For any $\mathcal B\in B$, we have $inv_w(\mathcal
B(P_w))=0$ since $\mathcal B$ can be extended to $\mathcal U$ and $P_w \in \mathcal U(\frak o_{F_w})$.

\item Suppose $w$ is some $w_i$ or $w$ is over $V'$.  Then $K.F/F$ is totally split at $w$ by the above argument 3) for $w=w_i$ and the definition of $V'$ for $w$ is over $V'$. Since $\Br(X_K)=\Br_0(X_K)$ by Lemma \ref{Brauer-trivial}, the image of $\mathcal B$ in $\Br(X_{F_{w}})$ is trivial by the choice of $S_0$ ($S_1\supset S_0$), hence $inv_w(\mathcal
B(P_w))=0$.
\end{enumerate}

%Let $w$ be a place of $F$ over $v \in S_1$.
By (\ref{equ:3}), point (ii) and the above arguments, we
have
%$$\sum_{w\mid v}inv_v(\Cor_{F_w/k_v}(\mathcal B(P_w)))=inv_v(\left<\mathcal B,z_{v}^2\right>).$$ Therefore we have
$$\sum_{w\in \Omega_F}inv_v(\Cor_{F_w/k_v}(\mathcal B(P_w)))=\sum_{v\in S_1}inv_v(\left<\mathcal B,z_{v}^2\right>)=0.$$
%Since $$inv_v(\Cor_{F_w/k_v}\mathcal B(P_w))=inv_w(\mathcal
%B(P_w)),$$
Therefore
$$\sum_{w\in \Omega_F}inv_w(\mathcal B(P_w))=0$$ by the equality $inv_v(\Cor_{F_w/k_v}\mathcal B(P_w))=inv_w(\mathcal
B(P_w))$.
 Since $[F:k]$ and
$[K:k]$ are relatively prime, we have $F\cap K=k$. Therefore the
natural map $\Br(T^c)/\Br_0(T^c)\rightarrow \Br(T^c_F)/\Br_0(T^c_F)$
is an isomorphism. By a similar argument as the last part in the proof of Theorem \ref{rational-1}, $B$ generates the unramified Brauer group
$\Br_{ur}(U_\theta)/\Br_0(U_\theta)$. Therefore there is no
Brauer-Manin obstruction on $U_\theta$ for $(P_w)_{w\in \Omega_F}$, hence
$U_\theta/F$ possesses an $F$-point by \cite[Theorem 8.12]{San}.
\end{proof}

With the help of Salberger's device, by similar argument as above we
have the following result which corresponds to Theorem
\ref{rational-2}.

\begin{thm} \label{cycle-2}
Let $k$ be a number field and $P(t)$ a nonzero polynomial over $k$. Let $V$
be the smooth locus of the affine $k$-variety defined by
$$(x_1^2-ax_2^2)(y_1^2-by_2^2)(z_1^2-abz_2^2)=P(t)$$
where $a,b\in k^\times$. If there is no Brauer-Manin obstruction to the
existence of a zero-cycle of degree~1 on a smooth proper model $V^c$
of $V$, then there is a zero-cycle of degree 1 on $V^c$ (defined
over $k$).
\end{thm}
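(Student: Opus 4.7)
The plan is to combine the strategy of Theorem \ref{rational-2} with Salberger's device as implemented in Theorem \ref{cycle-1}. As in Theorem \ref{rational-2}, we may assume none of $a,b,ab$ is a square in $k^*$; set $K=k(\sqrt a,\sqrt b)$. Write $P(t)=cp_1(t)^{e_1}\cdots p_m(t)^{e_m}$ with $p_i(t)$ irreducible over $k$, and let $U$ be the smooth affine open subset of $V$ defined by $P(t)\neq 0$ with smooth compactification $U^c$. The key Brauer elements are exactly as in Theorem \ref{rational-2}: the finite set
$$A=\{(p_i(t),b)\in\Br(U)\mid 1\le i\le m\}\cup\{(x_1^2-ax_2^2,b)\},$$
the last one being (by \cite{CT11}) the unique generator modulo $\Br(k)$ of the Brauer group of the smooth fibre. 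Let $S_0$ contain the archimedean places, places of bad reduction for the $p_i(t)$, and places ramified in $K/k$, and let $N_0$ be a closed point of $U$ whose residue field contains $K$, of degree $d=[K:k]\cdot\deg(p_i)$ (or simply choose $N_0$ so that the smooth fibre there has a rational point).

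Assuming no Brauer--Manin obstruction to the existence of a zero-cycle of degree $1$ on $V^c$, by Harari's formal lemma for zero-cycles (an obvious variant of \cite[Theorem 3.2.2]{CTSD94}) we obtain a finite set $S_1\supset S_0$ and, for each $v\in S_1$, a zero-cycle $z_v$ of degree $1$ supported in $U\times_kk_v$ with
$$\sum_{v\in S_1}\operatorname{inv}_v(\langle\mathcal A,z_v\rangle)=0\quad\text{for all }\mathcal A\in A.$$
Following the bookkeeping in the proof of Theorem \ref{cycle-1}, let $s$ be the l.c.m.\ of the orders of the elements of $A$, replace $z_v$ by the effective cycle $z_v^1=z_v^++(ds-1)z_v^-$, add multiples of a suitable effective cycle (e.g.\ $s\cdot N_0$ supported in the smooth locus) to equalize degrees to some common value $1+Dds$, and then use the implicit function theorem together with Lemma 6.2.1 of \cite{CTSD94} to modify each resulting effective cycle $z_v^2$ into a sum of \emph{distinct} closed points whose images in $\mathbb{A}^1_{k_v}$ under the projection $p$ are also distinct and whose residue fields equal the residue fields of their images, all while preserving the local invariants of every $\mathcal A\in A$.

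The core step is to apply Theorem 3.1 of \cite[p.~15]{CTSD94} to produce an irreducible monic polynomial $G(t)\in k[t]$ of degree $1+Dds$, close to $G_v(t):=$ characteristic polynomial of $p(z_v^2)$ for each $v\in S_1$, such that the closed point $M=\operatorname{Spec}k[t]/(G)$ on $\mathbb{A}^1_k$ has residue field $F=k(M)$ with the property: at each place $w$ of $F$ above a place $v\notin S_1$, either $P(\theta)$ is a $w$-unit (so the fibre $V_\theta$ has an $F_w$-point because the splitting conditions for the three binary quadratic forms are unconstrained beyond the product being a unit), or $w$ lies above a single place $v_i'$ where $p_i(\theta)$ is a uniformizer; and at $S_1$, $G(t)$ approximates $G_v(t)$ well enough that $\sum_{w\mid v}P_w$ is close to $z_v^2$ with $\sum_{w\mid v}\operatorname{inv}_v(\operatorname{Cores}_{F_w/k_v}\mathcal A(P_w))=\operatorname{inv}_v(\langle\mathcal A,z_v^2\rangle)$ for each $\mathcal A\in A$. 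In Theorem 3.1 we take the auxiliary field ``$L$'' to be $K=k(\sqrt a,\sqrt b)$.

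Now the verifications proceed exactly as in the proof of Theorem \ref{rational-2}, but transported to $F$. At each place $w$ of $F$ outside the special places $w_i'$, local solvability of $V_\theta$ over $F_w$ follows because $K/k$ is unramified there, so one of $a,b,ab$ becomes a square over $k_v\subset F_w$. At the special places $w_i'$ the reciprocity argument applied to $(p_i(\theta),b)\in\Br(F)$ (using condition (i) on the invariants, condition (ii) on the unit/uniformizer pattern, and the global sum-of-invariants formula over $F$) shows $k(\sqrt b)\otimes_k F$ splits at $w_i'$, giving $F_{w_i'}$-points on $V_\theta$. The Brauer--Manin obstruction for $V_\theta^c/F$ is governed by $\mathcal B=(x_1^2-ax_2^2,b)$ alone, and the computation of $\operatorname{inv}_w(\mathcal B(P_w))$ splits into the same three cases as in Theorem \ref{rational-2} (depending on how $k_v(\sqrt a)$ and $k_v(\sqrt b)$ behave locally), each yielding $0$; together with condition (i) this gives $\sum_{w}\operatorname{inv}_w(\mathcal B(P_w))=0$. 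Since $[F:k]$ and $2$ need \emph{not} be coprime, to conclude that $\mathcal B$ still generates $\Br(V^c_\theta)/\Br_0(V^c_\theta)$ over $F$ one invokes the same fact from \cite{CT11} applied directly to $V^c_\theta/F$: the Brauer group of the smooth Ch\^atelet-like fibration is generated by the class of the norm form. The absence of Brauer--Manin obstruction for $V_\theta^c$ then yields, by Theorem 8.12 of \cite{San}, an $F$-rational point on $V_\theta^c$, and hence a zero-cycle of degree $\deg(M)=1+Dds$ on $V^c$; subtracting $Ds\cdot N_0$ produces a zero-cycle of degree $1$ on $V^c/k$.

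The main obstacle is the last Brauer-group-of-the-fibre point: ensuring that after base change to $F$ the element $\mathcal B=(x_1^2-ax_2^2,b)$ still accounts for all of $\Br(V^c_\theta)/\Br_0(V^c_\theta)$, since here $F/k$ is a rather wild degree-$(1+Dds)$ extension. This is exactly the role played by the assumption in Theorem 3.1 of \cite{CTSD94} that $F$ splits over $K$ only at the prescribed set $V'$, arranged so that $a,b,ab$ remain non-squares in the relevant completions of $F$ and the computation of \cite{CT11} applies over $F$ unchanged.
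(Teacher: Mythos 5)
Your proposal follows essentially the same route as the paper (Harari's formal lemma for zero-cycles, Salberger bookkeeping as in Theorem~\ref{cycle-1}, Theorem~3.1 of~\cite{CTSD94} with the auxiliary field~$K$, the same three-case local computation for $\mathcal B=(x_1^2-ax_2^2,b)$ borrowed from Theorem~\ref{rational-2}), and it is correct in outline. However, your handling of the final step---why $\mathcal B$ still generates $\Br(V^c_\theta)/\Br_0(V^c_\theta)$ after base change to $F$---contains a misconception that the paper avoids.

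You assert that ``$[F:k]$ and $2$ need \emph{not} be coprime'' and then try to repair this by appealing to the splitting restrictions in Theorem~3.1 of~\cite{CTSD94}, ``arranged so that $a,b,ab$ remain non-squares in the relevant completions of $F$''. This is not sound: what one needs is the \emph{global} fact $F\cap K=k$ (equivalently, none of $a,b,ab$ is a square in $F$), and the local conditions of Theorem~3.1 govern only the splitting of $K\!\cdot\! F/F$ at places of~$F$; they do not by themselves prevent $F$ from containing $\sqrt a$, say. The correct observation---which the paper makes explicit by choosing $N_0$ of residue field $k(\sqrt a)$, so that adding multiples of $sN_0$ keeps the common degree \emph{odd}---is that $[F:k]$ is necessarily odd, whence $F\cap K=k$ because every nontrivial subfield of $K$ has even degree over $k$. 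In your own construction the same thing actually holds: you take $N_0$ of degree $d=[K:k]\cdot\deg(p_i)$, which is even, so $[F:k]=1+Dds$ is automatically odd and your worry about parity never arises. Once $F\cap K=k$ is in hand, base change leaves the Galois picture of $K\!\cdot\! F/F$ identical to that of $K/k$, and the description of $\Br$ of the fibre in~\cite{CT11} transfers verbatim. If you fix the justification at this point (replace the appeal to local conditions by the parity/intersection argument), the proof is complete and matches the paper's.
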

\begin{proof} Let $U$ be the smooth affine variety over $k$ defined by
$$(x_1^2-ax_2^2)(y_1^2-by_2^2)(z_1^2-abz_2^2)=P(t)\neq 0.$$
Let $U^c$ be a smooth compactification of $U$ with a projection $p:
U^c \rightarrow \P^1_k$. If one of the three numbers $a,b,ab$ is a
square in $k^\times$, this theorem is obvious since $Y$ is rational. Then we only need to
consider the case all numbers $a,b,ab$ are not
squares in $k^\times$.

Write $P(t)=cp_1(t)^{e_1}\cdots p_m(t)^{e_m}$, where $p_i(t)$ is monic and irreducible over $k$. Let $S_0$ be a finite
set of places of $k$ containing all the archimedean places and all
the bad finite places in sight: finite places where one $p_i(t)$ is
not integral, finite places where all $p_i(t)$ are integral but the
product $\prod_{i}p_i(t) $ does not remain separable when reduced
modulo $v$, places ramified in the extension $K/k$ where
$K=k(\sqrt{a},\sqrt{b})$.

Let $$A=\{(p_i(t),b)\in \Br(U): 1\leq i \leq m\}\cup
\{(x_1^2-ax_2^2,b)\}.$$ We know $(x_1^2-ax_2^2,b)$ is the unique
generator of the unramified Brauer group of the smooth fibre of $U$ (see \cite[Theorem 4.1]{CT11}).

Since we assume that there is no Brauer-Manin obstruction to the
existence of a zero-cycle of degree 1, by an obvious variant of
Harari's result(\cite[Theorem 3.2.2]{CTSD94}) we may find a finite
set $S_1$ of places of $k$ containing $S_0$ and for each $v\in S_1$
a zero-cycle $z_v$ of degree 1 with support in $U\times_{k}k_{v}$
such that
\begin{equation}\label{Ha:4.2}
     \sum_{v\in S_1}inv_{v}(\left<\mathcal B,z_{v}\right>)=0 \text{ for all } \mathcal B\in A.
\end{equation}
By the similar argument as in the proof of Theorem \ref{cycle-1},
for each $v \in S_1$,  we may assume that:
\begin{enumerate}[1)]
\item Each cycle $z_v$ is effective and has the same degree $D$, where
$D$ is odd.
\item Each cycle $z_v$ is a sum of distinct closed points $P_v$ whose
images under $p_{k_v}: U_{k_v} \rightarrow \A_{k_v}^{1}$ are also
distinct, and the field extension map $k_v(p(P_v))\subset k_v(P_v)$
is an isomorphism.
\end{enumerate}

Each of the zero-cycles $p(z_v)$ is now given by a separable monic
polynomial $G_v[t] \in k_v[t]$ of degree $D$, prime to $P(t)$ and
with the property that the smooth fibres of $p$ above the roots of
$G_v(t)$ have  rational points over their field of definition.

An irreducible polynomial $G(t)$ of degree $D$ defines a closed
point $M$ of degree $D$ on $\A_k^{1}$. Let $F=k(M)=k[t]/(G(t))$. Let
$\theta$ be the residue class of $t$ in $F$. We now choose the
irreducible polynomial $G(t)$ as given by
\cite[Theorem 3.1]{CTSSD98} with the field $L$ in Theorem 3.1 contains $K$, and $V'$
($V$ in Theorem 3.1) is the set of the places of $k$ at
which $L$ is totally split, such that

(i) For each place $v\in S_1$, $G(t)$ is close enough to $G_v(t)$, such
that the fibre $U_\theta$ contains an $F_w$-point $P_w$ for
each place $w$ of $F$ over $v$, and such that $\sum_{w\mid v}P_w$ is
"close" enough to $z_v$ satisfying
$$\sum_{w\mid v}inv_v(\Cor_{F_w/k_v}(\mathcal B(P_w)))=inv_v(\left<\mathcal B,z_v\right>)$$ for each $\mathcal B\in A$.

(ii) For each irreducible term $p_i(t)$ of the polynomial $P(t)$,
there exists a place $w_i$ of $F$ such that $p_i(\theta)$ is a
uniformizer at $w_i$ and  is a unit at $w$ if $w \text { is not
over } S_1\cup V'$ and $w\neq w_i$.

If $w$ is not over $S_1$, then $K.F/F$ is unramified at $w$. Then
one of $a,b,ab$ is a square in $F_w^\times$, hence the fibre $U_\theta$ contains an $F_w$-point $P_w$.

Let $\mathcal B=(x_1^2-ax_2^2,b)$. In the following we will show $inv_w(\mathcal B(P_w))=0$ for any $w$ is not over $S_1$:
\begin{enumerate}[a)]
\item Suppose $w\neq w_i$ for $1\leq i\leq m$.
\begin{enumerate}[1)]
\item If $F(\sqrt{b})/F$ is split at
$w$, then $inv_w(\mathcal B(P_w))=0$.

\item If $F_w(\sqrt{b})=F_w(\sqrt{a})$ is inert over $F_w$, then
$inv_w(\mathcal B(P_w))=0$ obviously.

\item If $F_w(\sqrt{b})/F_w$ is inert and
$F_w(\sqrt{a})/F_w$ is split, then
$$(y_1^2-by_2^2,b)_w=(z_1^2-abz_2^2,b)_w=0.$$ One has
$$inv_w(\mathcal
B(P_w))=(x_1^2-ax_2^2,b)_w=(P(\theta),b)_w=0$$ since either $P(\theta)$ is a unit at $w$  or $K.F/F$ is totally split at $w$.
\end{enumerate}
\item Suppose $w$ is some $w_i$.
For each $(p_i(t),b)\in A$, by (\ref{Ha:4.2}) and point (i), we have
$$\sum_{v\in S_1}inv_v(\Cor_{F_w/k_v}(p_i(\theta),b))=0.$$
By global reciprocity law, one has
\begin{equation}\label{equ:cycle-2}
\sum_{v\not\in S_1}inv_v(\Cor_{F_w/k_v}(p_i(\theta),b))=0.
\end{equation}
However, $inv_{w}(p_i(\theta),b)=0$ for $w$ not over $S_1$ and
$w\neq w_i$, since either $p_i(\theta)$ is a unit at $w$  or $K.F/F$ is totally split at $w$ by point (ii)
above. Therefore (\ref{equ:cycle-2}) implies
$$inv_{w_i}(p_i(\theta),b)=0.$$
Since $p_i(\theta)$ is a uniformizer at $w_i$, we have $F(\sqrt{b})/F$ is
totally split at $w_i$, hence $inv_{w_i}(\mathcal B(P_{w_i}))=0$.
\end{enumerate}

By the above arguments,  we have
$$\begin{aligned}\sum_{w\in \Omega_F}inv_w(\mathcal B(P_w))=\sum_{w\mid v\in S_1}inv_w(\mathcal B(P_w))
=\sum_{v\in S_1}inv_{v}(\left<\mathcal B,z_{v}\right>)=0
\end{aligned}$$
by (\ref{Ha:4.2}) for the last equality.
Since $F\cap K=k$, we deduce that $B$ generates the unramified
Brauer group  $\Br_{ur}(U_\theta)/\Br_0(U)$ by \cite[Theorem 4.1]{CT11}. Therefore there is no Brauer-Manin
obstruction on $U_\theta$ for $(P_w)_{w\in \Omega_F}$. Hence $U_\theta/F$ possesses
an $F$-point by \cite[Theorem 8.12]{San}.
\end{proof}

For all primes $p$ (do not need the condition (\ref{assumption:P})), we can prove the following
result (which corresponds to Theorem \ref{rational-3}).
\begin{thm}\label{cycle-3}
Let $k$ be a number field and $P(t)$ a nonzero polynomial over $k$. Let $p$
be a prime. Let $K/k$ be of degree $p$ and not cyclic. Let $V$ be
the smooth locus of the affine $k$-variety defined by
$$N_{K/k}(\Xi)=P(t).$$ If there is no Brauer-Manin
obstruction to the existence of a zero-cycle of degree 1 on a smooth
proper model $V^c$ of $V$, then there is a zero-cycle of degree 1 on
$V^c$ (defined over $k$).
\end{thm}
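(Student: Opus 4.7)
The plan is to combine the Salberger-device framework of Theorems \ref{cycle-1} and \ref{cycle-2} with a Chebotarev analysis generalizing Theorem \ref{rational-3} to the prime-degree case. Let $K^{cl}/k$ be the Galois closure of $K/k$, with $G=\Gal(K^{cl}/k)$ a transitive non-cyclic subgroup of $S_p$. Let $U\subset V$ be the open subvariety where $P(t)\ne 0$, $U^c$ a smooth compactification with projection $p\colon U^c\to\P^1_k$, and factor $P(t)=c\prod_ip_i(t)^{e_i}$ over $k$ with $L_i=k[t]/(p_i(t))$. Assemble a finite set $A\subset\Br(U)$ of ``monitoring'' Brauer classes generalizing $A_1\cup A_2$ from the proof of Theorem \ref{rational-3}: for each $i$ and each cyclic character $\chi$ of a subquotient of $G$ obtained from a tower $k\subseteq k'\subseteq K^{cl}$, include a cyclic algebra $(p_i(t),\chi)$ if $k'\subseteq L_i$, or a corestriction $Cores_{k'(t)/k(t)}(l_i(t),\chi)$ otherwise, where $l_i(t)$ is an irreducible factor of $p_i(t)$ over $k'$. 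Also choose a finite subset $B\subset\Br(U)$ whose image in $\cyr{X}^2_\omega(K^{cl}/k,\widehat T)$ surjects onto that group, as in Theorem \ref{cycle-1}.

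Using the assumed absence of Brauer-Manin obstruction to a zero-cycle of degree 1 and Harari's variant of the formal lemma (\cite[Theorem 3.2.2]{CTSD94}), obtain a finite set $S_1$ of places (including all ramified and bad places) and local zero-cycles $z_v\in Z_0(U_{k_v})$ of degree 1 with $\sum_{v\in S_1}inv_v(\langle\mathcal A,z_v\rangle)=0$ for every $\mathcal A\in A\cup B$. Apply Salberger's manipulation exactly as in the proof of Theorem \ref{cycle-1}: let $s$ annihilate all chosen classes, let $N_0$ be a closed point of $U$ with residue field $K$ (of degree $p$), and replace each $z_v$ by an effective $z_v^2$ of common degree $1+Dsp$ supported on distinct closed points whose images in $\A^1_{k_v}$ are distinct and residue-field-rational. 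Then invoke \cite[Theorem 3.1]{CTSD94} with auxiliary field $K^{cl}$ and split-place set $V'$ (places of $k$ totally split in $K^{cl}$) to produce an irreducible monic $G(t)\in k[t]$ of degree $1+Dsp$, $v$-adically close to the polynomials $G_v(t)$ defining $p(z_v^2)$ for $v\in S_1$, such that each $p_i(\theta)$ (for $\theta$ the image of $t$ in $F=k[t]/(G(t))$) is a uniformizer at exactly one place $w_i$ of $F$ and a unit at all other places outside $S_1\cup V'$. Coprimality $\gcd([F:k],p)=1$ forces $F\cap K=k$, so $K\cdot F/F$ has degree $p$.

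The main obstacle is to verify that $U_\theta$ has an $F_w$-rational point at every place $w$ of $F$. At places outside $S_1\cup V'$ and away from the $w_i$, local solvability is immediate since $P(\theta)$ is a unit; at places of $F$ above $V'$ it follows from total splitness of $K$; for $w\in S_1$, closeness to the approximating $z_v^2$ provides the local point. The crux lies at each $w_i$: since $K^{cl}\cdot F/F$ is unramified at $w_i$, the decomposition group $D_i\subset\Gal(K^{cl}\cdot F/F)$ is cyclic. Global reciprocity applied to the monitoring classes in $A$, combined with the uniformizer/unit behavior of $G(t)$, translates into the vanishing of $\chi(\mathrm{Frob}_{w_i})$ for each character $\chi$ arising in $A$; a careful choice of $A$ forces $D_i$ to lie in a subgroup of $G$ of order coprime to $p$. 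Any cyclic $p'$-subgroup of $G\subset S_p$ fails to act transitively on the $p$ embeddings $K\hookrightarrow K^{cl}$, hence fixes at least one embedding and is therefore contained in a point stabilizer, i.e.\ a conjugate of $\Gal(K^{cl}/K)$. The fixed field of $D_i$ in $K^{cl}\cdot F$ thus contains a conjugate of $K\cdot F$, yielding an embedding $K\cdot F\hookrightarrow F_{w_i}$ and hence an $F_{w_i}$-point of $U_\theta$.

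Once local points exist everywhere, the surjectivity argument at the end of the proof of Theorem \ref{rational-1} (using $F\cap K=k$) shows the $B$-classes generate $\Br(U^c_\theta)/\Br_0(U^c_\theta)$; the global Brauer pairings $\sum_w inv_w(\mathcal B(P_w))$ for $\mathcal B\in B$ reduce via local invariant compatibility $inv_v(Cores_{F_w/k_v}(\cdot))=inv_w(\cdot)$ to the equations $\sum_{v\in S_1}inv_v(\langle\mathcal B,z_v^2\rangle)=0$ already secured. Thus there is no Brauer-Manin obstruction on $U^c_\theta/F$. Since $U_\theta$ is birationally a principal homogeneous space of the torus $R^1_{K\cdot F/F}(\G_m)$, Sansuc's theorem (Theorem~8.12 of \cite{San}) furnishes an $F$-rational point on $U^c_\theta$. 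This produces a zero-cycle of degree $1+Dsp$ on $V^c$; correcting by $s$-multiples of the closed point $N_0$ (of degree $p$) yields a zero-cycle of degree $1$ on $V^c$, completing the proof.
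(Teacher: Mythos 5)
Your proposal takes a genuinely different route from the paper, and it contains a serious gap. The paper does not push the Salberger-plus-Chebotarev argument of Theorem~\ref{rational-3} through for general~$p$ at all: it instead reduces to the cyclic case by base change. Concretely, let $K^{cl}/k$ be the Galois closure with $G=\Gal(K^{cl}/k)\subset S_p$, let $H\subset G$ be a $p$-Sylow (cyclic of order $p$), and let $\Theta$ be the fixed field of $H$. Then $[\Theta:k]$ is prime to $p$, $K.\Theta=K^{cl}$ and $K^{cl}/\Theta$ is cyclic of degree $p$; pushing the adelic family of zero-cycles to $V^c_\Theta$ preserves orthogonality to the Brauer group (projection formula), so Theorem~\ref{cycle-1} (or \cite[Theorem~4.1]{CTSSD98}) produces a zero-cycle of degree~1 on $V^c_\Theta$, hence a closed point of degree prime to~$p$ over $\Theta$, hence one over $k$. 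Combined with the closed point $N_0$ of degree $p$ (with $k(N_0)=K$) this yields a zero-cycle of degree~1 on $V^c$.

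The crux of your proposal is the assertion that ``a careful choice of $A$ forces $D_i$ to lie in a subgroup of $G$ of order coprime to $p$.'' This is exactly the step that the paper's proof of Theorem~\ref{rational-3} carries out for $p=3$, and it works there because $G=S_3$ has a \emph{unique quadratic subfield} $k(\sqrt{d})$ of $K^{cl}$: forcing the quadratic character to be ramified at $v_i'$ (a positive result, engineered by first \emph{inserting} auxiliary Chebotarev places where the invariant is nonzero) pins the cyclic decomposition group down to $\Z/2$, a point stabilizer. Your write-up inverts this logic: you say reciprocity gives \emph{vanishing} of $\chi(\mathrm{Frob}_{w_i})$, which would place Frobenius in the kernel of $\chi$, typically \emph{containing} the $p$-Sylow --- the opposite of what you need. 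More fundamentally, for general non-cyclic transitive $G\subset S_p$ the monitoring-class apparatus does not exist: if $G$ is non-solvable (e.g.\ $G=A_5$ for $p=5$, or $\mathrm{PSL}_2(q)$, or $A_p$), then $G^{\mathrm{ab}}$ can be trivial, so there are no cyclic algebras of the form $(p_i(t),\chi)$ over $k$ at all, and corestrictions from proper subfields do not obviously furnish the needed constraint on Frobenius via class field theory. Your construction of $A$ (``for each cyclic character $\chi$ of a subquotient of $G$ obtained from a tower $k\subseteq k'\subseteq K^{cl}$'') is not pinned down in a way that could be checked, and I do not see how any such finite collection of abelian constraints can control Frobenius in a simple non-abelian Galois group. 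The correct and short argument is the base change to $\Theta$, where the problem collapses to the cyclic degree-$p$ case already handled.
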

\begin{proof} Obviously $V$ has a closed point $N_0$ with
$k(N_0)=K$. Then we only need to show that there is a field $F/k$
with degree $[F:k]$ prime to $p$ such that $V^c(F)\neq \emptyset$.

Let $K^{cl}$ be the Galois closure of $K/k$ with the Galois group
$G$. Then $G$ is a subgroup of the symmetric group
$\mathcal S_p$. Let $H$ be the $p$-Sylow subgroup of $G$. We can see $H$ is
cyclic and of order $p$. Let $\Theta$ be the fixed field by $H$.
Then $[\Theta:k]$ is relative prime to $p$.

One has a family $\{z_{v}\}_v$ of zero-cycles of degree 1  which is
orthogonal to $\Br(V^c)$. Then one pushes $\{z_{v}\}_v$ to
$V_\Theta^c$, one gets a family of zero-cycles of degree 1  on
$V^c_{\Theta}$. A projection formula for the Brauer pairing shows
the family of zero-cycles on $V^c_{\Theta}$ is orthogonal to
$\Br(V^c_{\Theta})$. Since $K^{cl}/\Theta=K.\Theta/\Theta$ is cyclic
(of degree $p$), one gets a zero-cycle of degree 1 on $V^c_{\Theta}$
by \cite[Theorem 4.1 ]{CTSSD98}. So
there is a field $F/\Theta$ with the degree $[F:\Theta]$ prime to
$p$ such that $V^c(F)\neq \emptyset$. Since $[\Theta:k]$ is prime to
$p$, we have $[F:k]=[F:\Theta]\cdot [\Theta:k]$ is also prime to
$p$.
\end{proof}

\bf{Acknowledgment} \it{The author is grateful to the referee for a careful reading of the manuscript
and for useful remarks which improved the original presentation. The author would like to thank Professor
Colliot-Th\'el\`ene for helpful discussions and valuable
suggestions. The work is supported by National
Key Basic Research Program of China (Grant No. 2013CB834202) and
National Natural Science Foundation of China (Grant Nos. 11371210 and
11321101), and grant DE1646/2-1 of the Deutsche Forschungsgemeinschaft.}

%-----------------------------------------------------------------------%
%\bibliographystyle{plainnat}

\end{document}